\documentclass[pdflatex,sn-mathphys-num]{sn-jnl}

\usepackage{graphicx}
\usepackage{multirow}
\usepackage{amsmath,amssymb,amsfonts}
\usepackage{amsthm}
\usepackage{mathrsfs}
\usepackage[title]{appendix}
\usepackage{xcolor}
\usepackage{textcomp}
\usepackage{manyfoot}
\usepackage{booktabs}
\usepackage{algorithm}
\usepackage{algorithmicx}
\usepackage{algpseudocode}
\usepackage{listings}
\usepackage{enumitem}

\theoremstyle{thmstyleone}
\newtheorem{theorem}{Theorem}
\newtheorem{proposition}[theorem]{Proposition}

\theoremstyle{thmstyletwo}
\newtheorem{example}{Example}
\newtheorem{remark}{Remark}

\theoremstyle{thmstylethree}
\newtheorem{definition}{Definition}

\usepackage{url}
\usepackage{dsfont}
\usepackage{thmtools}
\usepackage{thm-restate}
\declaretheorem[name=Lemma,numberwithin=section]{lemma}
\declaretheorem[name=Corollary,numberwithin=section]{corollary}
\declaretheorem[name=Assumption]{assumption}

\newcounter{subassumption}

\usepackage{circuitikz}
\usetikzlibrary{decorations.pathreplacing,calligraphy}

\newcommand{\W}{\mathcal{W}}

\begin{document}

\title[Article Title]{Construction and simulation of a path-valued model of dendrite
development}

\author*[1,2]{Andrew Nugent}\email{andrew.nugent@ucl.ac.uk}
\author[1,3]{Karen M. Page}
\author[1]{Alexey Zaikin}
\author[2]{Laura C. Andreae}
\affil[1]{Department of Mathematics, University College London}
\affil[2]{Centre for Developmental Neurobiology, King’s College London}
\affil[3]{Institute for the Physics of Living Systems (IPLS), University College London}

\abstract{Neurons receive information through their dendrites. During development, when synaptic connections are forming, dendrites grow, retract, and branch. The resulting dendritic tree shapes the structure of the broader neural network. Crucially, retraction and branching make it necessary to track whole dendritic paths rather than only their endpoints. While this is handled implicitly in some existing simulations, here we construct an explicitly path-valued stochastic process for dendrite growth. Combining this with a branching process, using ideas from measure-valued branching particle systems, we show that the model produces the typical tree structures of real dendrites. To complement this analytical work, we also outline several methods for numerical simulation, including time discretisations at different temporal scales and an approximation using a dynamic graph. This provides both a more rigorous mathematical framework and more structured simulation methods for modelling dendrite development.}

\keywords{Neuroscience, Stochastic processes, Branching morphogenesis}

\maketitle

\section{Introduction}

The neurons in mammalian brains have highly branched processes called dendrites, which receive inputs from other neurons via synaptic connections \cite{spruston2008pyramidal}. As they are responsible for receiving and integrating information, the structure of these dendritic trees influences the brain's ability to process and store information \cite{makarov2023dendrites}. As such, an understanding of how this structure develops is crucial to understand the structure and function of the broader neural network. 

Various approaches have been taken to the mathematical modelling of dendrites: some models focus primarily on their topological features, investigating for example the distribution of branches \cite{van2011stochastic,van1997natural,villacorta2007mathematical}; other models focus more on the biophysical mechanisms underpinning dendrite growth \cite{zubler2009framework,mclean2004continuum}; others place fixed cues or branching points in the environment and observe how these become connected \cite{cuntz2012scaling,kirchner2025dendritic}. Existing modelling techniques such as phase field models \cite{takaki2015phase}, annihilating random walks \cite{hannezo2017unifying} and diffusion-limited aggregation \cite{luczak2006spatial} have also been adapted to describe neuronal development. The articles \cite{oliveri2022mathematical,kiddie2005biologically} both outline many of the approaches above. 

Many models focus on tracking the dendrite tips, as this is the point from which dendrites grow, for example the use of branching and annihilating random walks to model dendrite paths in \cite{hannezo2017unifying}. However, it is noted that due to their spatial structure these walks are not independent. We further developed this idea by proposing that it is necessary to track the entire dendrite paths, not just their endpoints, in order to fully capture their development. A similar issue arises in \cite{shree2022dynamic}, in which each dendrite switches between growing, paused and retracting states. When branches retract, points are removed until the new desired length is reached. This model also allows new branches to appear along existing dendrites (referred to as side-branching) rather than only at their tips. Again, these features require information on the full path of the dendrite. 

In this paper, we aim to directly address two related modelling challenges. Firstly, due to side-branching and switching between growth and retraction, it is not sufficient to track only the position of the dendrite tips and in fact their entire path must be recorded. Secondly, combining branching and retraction creates the possibility that some branches may become disconnected if their `parent' branch retracts too far. This is biologically infeasible and so must be avoided. 

A key goal of this paper is to rigorously construct a path-valued process for dendrite development, including growth, retraction and branching that addresses both of these modelling challenges. In addition, we aim to construct a model that is continuous in both time and space, then from this continuous model derive methods for numerical simulations. The proposed model is a combination of three processes: a length process, that determines whether a particular dendrite is currently growing or retracting; a growth process, that describes the way in which growing dendrites extend in space; and a branching process, that determines when and where new dendrite branches are formed. We introduce a framework that appropriately combines these processes into a single stochastic process for the evolution of the entire neuron. 

The approach developed here may also be applied to branching morphogenesis, which appears in many biological settings, such as angiogenesis, lung development or root growth \cite{pillay2017modeling,hannezo2019multiscale,harrison2010shaping,metzger2008branching,iber2013control,menshykau2014interplay}, although our focus in this paper is primarily on neurons.  

The model construction is undertaken in Section \ref{Section: Construction}, first introducing the length and growth processes, then incorporating branching. Throughout we use the generator to characterise these stochastic processes, with the generator for a single branch serving as a building block from which the full model is constructed. In Section \ref{Section: Properties} we examine key properties of this process, for example proving that the tree structure of dendrites is preserved and studying the evolution of the total dendrite length. In Section \ref{Section: numerical simulation} we provide two different methods to numerically simulate the model. Finally, Section \ref{Section: Extensions} discusses various extensions to the model before we conclude in Section \ref{Section: Conclusion}.

\section{Model construction} \label{Section: Construction}

Construction of the model proceeds in the following steps:
\begin{enumerate}
    \item We describe the evolution of a single dendrite branch using a path-valued process, including growth from the endpoint and retraction. This single dendrite process is based on the Brownian snake introduced by Le Gall \cite{gall1999spatial}. In addition, we discuss the generator of this stochastic process. 
    \item We introduce a measure that tracks a fixed number of independent branches, extending the generator for the single branch to a generator for this measure. The branching-retraction problem mentioned previously is also outlined in greater detail. 
    \item Finally, we describe the creation and removal of branches as jumps in the space of measures, as in the classical theory of branching processes. We then add the generator for this jump process to the generator for the growth/length process to complete the construction. 
\end{enumerate}
The result is a stochastic process over the space of measures that tracks a finite (but changing) number of dendrite paths. It is then shown in Section \ref{Section: Properties} that this maps back to the expected tree-like graph structure. 

\subsection{Length and growth processes} \label{Section: Length and growth processes}

We begin by considering a single dendrite. The combination of length and growth processes described below adapts the construction of the Brownian snake by Le Gall \cite{gall1999spatial} to this setting. In the original construction the length process is called the `lifetime' process; it has been renamed as here it directly describes the length of a dendrite branch. This path-valued process was originally studied due to its connections with superprocesses and the genealogy of random continuous trees. However, the growth and retraction behaviour of the Brownian snake, as well as its spatial embedding, provides an excellent model for a single dendrite branch.

The length process, denoted $\ell_t$, is a continuous time stochastic process taking values in $\mathbb{R}^+$ that describes the length of the dendrite. The classical construction in \cite{gall1999spatial} uses a reflecting Brownian motion for this length process. This is extended in \cite{dhersin2000stochastic,abraham2002representations} to include a drift term that depends on the position of the snake/dendrite. As several existing models have been developed that describe the evolution of dendrite lengths \cite{marino2025free,mclean2004continuum} and explain observed oscillations, various alternative length processes are discussed in Section \ref{Section: Extensions}. For simplicity, we take $\ell_t$ as a reflecting Brownian motion as our standard setup. 

For a length process started at $\ell$, and time $s>0$, define the joint distribution $\gamma_s^\ell(dp, dq)$ of the pair $(p,q)=(\inf_{0\leq r \leq s} \ell_r, \ell_s)$. This tracks both the current length (denoted $q$) of the dendrite and shortest length so far observed (denoted $p$). As the process does not necessarily begin with zero length, $p$ is not necessarily zero. 

We next consider the spatial embedding of the dendrite. Assume that it lies in some compact subset of $\mathbb{R}^n$. Typically \textit{in vivo} we will have $n=3$, although some neurons do grow in an effectively flat plane, while \textit{in vitro} we typically have $n=2$. Beyond these spatial dimensions the growth process may have other properties. For example, the dendrite could be modelled as growing forwards in its current direction, in which case it is necessary to record a growth angle $\theta \in [0,2\pi)\times [0,\pi]^{n-1}$ (where $n$ is the spatial dimension). In addition one may wish to record the diameter of dendritic branches and so some additional width $d \in [0,d_{\max}]$ could also be tracked. Let $\xi$ be this growth process and let $X$ denote its state space. Denote by $\Pi_x$ the law of $\xi$ started at $x \in X$ (that is, the probability distribution over paths of $\xi$). We make several assumptions on this process. 

\begin{assumption}
    We make the following assumptions on the growth process: 
    \begin{enumerate}
        \item The state space $X$ is compact. Where $X$ is a subset of $\mathbb{R}^n$ this is equivalent to $X$ being closed and bounded. \label{Assumption: X compact}
        \item The spatial motion $\xi$ is a Feller process on $X$. Informally, this means that $\xi$ is a Markov process in which the probability distribution of future states changes continuously with the current state, see \cite[Section 4.2]{ethier2009markov} for details. 
        \item The spatial motion is continuous. More specifically, as in \cite{gall1999spatial} we assume that for every $\epsilon > 0$
        \begin{align*}
            \lim_{t\rightarrow0} \bigg( \sup_{x \in X} \Pi_x \Big( \sup_{r \leq t} d_X(x,\xi_r) > \epsilon \Big) \bigg) = 0 \,.
        \end{align*}
        That is, we assume that paths are uniformly right-continuous. \label{Assumption: continuous spatial motion}
    \end{enumerate}
\end{assumption}

While the tip of the dendrite is a single point that evolves according to $\xi$, we track the spatial position of the entire dendrite. As the dendrite has finite length $\ell_t$ and does not have gaps, its spatial position at time $t$ will be represented by a function $\omega_t \in C([0,\ell_t] ; X)$. Denote 
\begin{align*}
    \mathcal{W} = \bigcup_{\ell \geq0} C([0,\ell] ; X) \,,
\end{align*}
the space of all such paths. For $\omega \in \W$ denote by $\ell_\omega$ its length and $\hat{\omega} = \omega(\ell_\omega)$ its endpoint. As these paths are continuous we may define the distance 
\begin{align*}
    d_\W(\omega,\nu) = |\ell_\omega - \ell_\nu| + \sup_{t \geq 0} d_X\big(\omega(t \wedge \ell_\omega) , \nu(t \wedge \ell_\nu)\big) \,,
\end{align*}
under which $(\mathcal{W},d_\W)$ is a Polish space \cite{gall1999spatial}, meaning it is a suitable space on which to define probability measures. Here $t \wedge \ell_\omega$ denotes the minimum of $t$ and $\ell_\omega$. 

Using the spatial motion $\xi$ we now define a stochastic process $\omega_t$ on $\W$ for a single dendrite. The growth/retraction of the dendrite is determined by the length process $\ell_t$ introduced above, again let $(p,q)=(\inf_{0\leq r \leq s} \ell_r, \ell_s)$ and assume that these values are known. We can then define the probability measure $R_{p,q}(\omega,d\nu)$ that describes the probability of moving from a path $\omega\in \W$ to a path $\nu \in \W$, given known values of $p$ and $q$. The notation $R.a.s.$ is used to indicate that events happen almost surely with respect to this measure. The measure $R_{p,q}(\omega,d\nu)$ is defined by the following properties:  
\begin{enumerate}[label=\alph*)]
    \item $\ell_{\nu} = q$, $R.a.s.$, 
    \item $\nu(t) = \omega(t)$ for every $t \in [0,p]$ $R.a.s.$, 
    \item The law under $R_{p,q}(\omega,d\nu)$ of $(\nu(p+t), 0 \leq t \leq q-p)$ is the law of $(\xi_t, 0 \leq t \leq q-p)$ under $\Pi_{\omega(p)}$. 
\end{enumerate}
Informally, this means that the path of $\omega$ up to $\omega(p)$ is copied into $\nu$. The path of $\nu$ is then extended onto $[p,q]$ using the growth process $\xi$ started at $\omega(p)$. This is illustrated in Figure \ref{fig: Brownian snake}.


\begin{figure}[!ht]
\centering
\resizebox{.9\textwidth}{!}{%
\begin{circuitikz}
\tikzstyle{every node}=[font=\fontsize{27.9pt}{36.15pt}\selectfont]
\draw [line width=1.5pt, short] (-5,4.375) .. controls (-4.75,4.625) and (-4.625,4.625) .. (-4.625,4.75);
\draw [line width=1.5pt, short] (-4.625,4.75) .. controls (-4.625,5) and (-4.75,5) .. (-4.625,5.25);
\draw [line width=1.5pt, short] (-4.625,5.25) .. controls (-4.5,5.5) and (-4.5,5.625) .. (-4.375,5.75);
\draw [line width=1.5pt, short] (-4.375,5.75) .. controls (-4.125,6) and (-4.25,6) .. (-4,6.125);
\draw [line width=1.5pt, short] (-4,6.125) .. controls (-3.75,6.25) and (-3.75,6.125) .. (-3.625,6.25);
\draw [line width=1.5pt, short] (-3,9) .. controls (-3,9.25) and (-3.125,9.375) .. (-3.125,9.5);
\draw [line width=1.5pt, short] (-3.125,9.5) .. controls (-3.125,9.75) and (-3.25,9.75) .. (-3.125,10);
\draw [line width=1.5pt, short] (-3.125,10) .. controls (-3,10.25) and (-3.125,10.25) .. (-3,10.5);
\draw [line width=1.5pt, short] (-3,10.5) .. controls (-2.875,10.75) and (-2.75,10.75) .. (-2.75,11);
\draw [line width=1.5pt, short] (-2.75,11) .. controls (-2.625,11.25) and (-2.75,11.375) .. (-2.625,11.5);
\draw [line width=1.5pt, short] (-2.625,11.5) .. controls (-2.375,11.75) and (-2.5,11.75) .. (-2.25,11.875);
\draw [line width=1.5pt, short] (-2.25,11.875) .. controls (-2,12) and (-2,11.875) .. (-1.75,12);
\draw [line width=1.5pt, short] (-1.75,12) .. controls (-1.5,12.125) and (-1.5,12.125) .. (-1.25,12.25);
\draw [line width=1.5pt, short] (-1.25,12.25) .. controls (-1,12.5) and (-1.125,12.5) .. (-0.875,12.625);
\draw [line width=1.5pt, short] (-0.875,12.625) .. controls (-0.625,12.75) and (-0.625,12.625) .. (-0.375,12.625);
\draw [line width=1.5pt, short] (-0.375,12.625) .. controls (-0.125,12.625) and (-0.125,12.625) .. (0.125,12.5);
\draw [line width=1.5pt, short] (-3,9) .. controls (-3,8.875) and (-2.875,8.75) .. (-2.75,8.5);
\draw [line width=1.5pt, short] (-2.75,8.5) .. controls (-2.625,8.25) and (-2.75,8.25) .. (-2.75,8);
\draw [line width=1.5pt, short] (-2.75,8) .. controls (-2.75,7.75) and (-2.75,7.75) .. (-2.875,7.5);
\draw [line width=1.5pt, short] (-2.875,7.5) .. controls (-3,7.25) and (-2.875,7.25) .. (-2.875,7);
\draw [line width=1.5pt, short] (-2.875,7) .. controls (-2.875,6.875) and (-3,6.75) .. (-3.125,6.5);
\draw [line width=1.5pt, short] (-3.125,6.5) .. controls (-3.375,6.375) and (-3.375,6.375) .. (-3.625,6.25);
\draw [line width=1.5pt, short] (11.875,8.875) .. controls (12,8.75) and (12.125,8.75) .. (12.25,8.5);
\draw [line width=1.5pt, short] (11.875,8.875) .. controls (11.625,9.125) and (11.75,9.125) .. (11.5,9.25);
\draw [line width=1.5pt, short] (11.5,9.25) .. controls (11.25,9.375) and (11.25,9.375) .. (11,9.375);
\draw [line width=1.5pt, short] (11,9.375) .. controls (10.75,9.625) and (10.875,9.625) .. (10.625,9.75);
\draw [line width=1.5pt, short] (10.625,9.75) .. controls (10.5,10) and (10.625,10) .. (10.5,10.25);
\draw [line width=1.5pt, short] (10.5,10.25) .. controls (10.5,10.625) and (10.625,10.5) .. (10.625,10.75);
\draw [line width=1.5pt, short] (10.625,10.75) .. controls (10.75,11.125) and (10.625,11) .. (10.625,11.25);
\draw [line width=1.5pt, short] (10.625,11.25) .. controls (10.625,11.5) and (10.75,11.5) .. (10.75,11.75);
\draw [line width=1.5pt, short] (10.75,11.75) .. controls (11,12.125) and (11,12) .. (11.125,12.125);
\draw [line width=1.5pt, short] (11.125,12.125) .. controls (11.5,12.25) and (11.375,12.25) .. (11.625,12.375);
\draw [line width=1.5pt, short] (11.625,12.375) .. controls (11.875,12.75) and (11.875,12.625) .. (12,12.75);
\draw [line width=1.5pt, short] (12,12.75) .. controls (12.125,13) and (12.125,13) .. (12.125,13.25);

\draw [line width=1pt ] (8.75,14.375) rectangle (16.25,4.375);
\draw [line width=1pt ] (-6.25,14.375) rectangle (1.25,4.375);
\draw [line width=1pt ] (1.25,14.375) rectangle (8.75,4.375);

\node [font=\fontsize{27.9pt}{36.15pt}\selectfont, fill={rgb,255:red,255; green,255; blue,255}, fill opacity=1, text opacity=1, inner xsep=0.080cm, inner ysep=0.085cm, rounded corners=0.020cm] at (-5.375,13.125) {1)};
\node [font=\fontsize{27.9pt}{36.15pt}\selectfont, fill={rgb,255:red,255; green,255; blue,255}, fill opacity=1, text opacity=1, inner xsep=0.080cm, inner ysep=0.085cm, rounded corners=0.020cm] at (2.125,13.125) {2)};
\node [font=\fontsize{27.9pt}{36.15pt}\selectfont, fill={rgb,255:red,255; green,255; blue,255}, fill opacity=1, text opacity=1, inner xsep=0.080cm, inner ysep=0.085cm, rounded corners=0.020cm] at (9.625,13.125) {3)};
\draw [line width=1.5pt, short] (2.5,4.375) .. controls (2.75,4.625) and (2.875,4.625) .. (2.875,4.75);
\draw [line width=1.5pt, short] (2.875,4.75) .. controls (2.875,5) and (2.75,5) .. (2.875,5.25);
\draw [line width=1.5pt, short] (2.875,5.25) .. controls (3,5.5) and (3,5.625) .. (3.125,5.75);
\draw [line width=1.5pt, short] (3.125,5.75) .. controls (3.375,6) and (3.25,6) .. (3.5,6.125);
\draw [line width=1.5pt, short] (3.5,6.125) .. controls (3.75,6.25) and (3.75,6.125) .. (3.875,6.25);
\draw [line width=1.5pt, short] (4.75,8.5) .. controls (4.875,8.25) and (4.75,8.25) .. (4.75,8);
\draw [line width=1.5pt, short] (4.75,8) .. controls (4.75,7.75) and (4.75,7.75) .. (4.625,7.5);
\draw [line width=1.5pt, short] (4.625,7) .. controls (4.625,6.875) and (4.5,6.75) .. (4.375,6.5);
\draw [line width=1.5pt, short] (4.375,6.5) .. controls (4.125,6.375) and (4.125,6.375) .. (3.875,6.25);
\draw [line width=1.5pt, short] (4.625,7.5) .. controls (4.5,7.25) and (4.625,7.25) .. (4.625,7);
\draw [ fill={rgb,255:red,0; green,0; blue,0}, fill opacity=1,line width=1.5pt ] (0.125,12.5) circle (0.125cm);
\draw [ fill={rgb,255:red,0; green,0; blue,0}, fill opacity=1,line width=1.5pt ] (4.75,8.5) circle (0.125cm);
\draw [line width=1.5pt, dash pattern=on 3\pgflinewidth off 1.5\pgflinewidth] (4.5,9) .. controls (4.5,9.25) and (4.375,9.375) .. (4.375,9.5);
\draw [line width=1.5pt, dash pattern=on 3\pgflinewidth off 1.5\pgflinewidth] (4.375,9.5) .. controls (4.375,9.75) and (4.25,9.75) .. (4.375,10);
\draw [line width=1.5pt, dash pattern=on 3\pgflinewidth off 1.5\pgflinewidth] (4.375,10) .. controls (4.5,10.25) and (4.375,10.25) .. (4.5,10.5);
\draw [line width=1.5pt, dash pattern=on 3\pgflinewidth off 1.5\pgflinewidth] (4.5,10.5) .. controls (4.625,10.75) and (4.75,10.75) .. (4.75,11);
\draw [line width=1.5pt, dash pattern=on 3\pgflinewidth off 1.5\pgflinewidth] (4.75,11) .. controls (4.875,11.25) and (4.75,11.375) .. (4.875,11.5);
\draw [line width=1.5pt, dash pattern=on 3\pgflinewidth off 1.5\pgflinewidth] (4.875,11.5) .. controls (5.125,11.75) and (5,11.75) .. (5.25,11.875);
\draw [line width=1.5pt, dash pattern=on 3\pgflinewidth off 1.5\pgflinewidth] (5.25,11.875) .. controls (5.5,12) and (5.5,11.875) .. (5.75,12);
\draw [line width=1.5pt, dash pattern=on 3\pgflinewidth off 1.5\pgflinewidth] (5.75,12) .. controls (6,12.125) and (6,12.125) .. (6.25,12.25);
\draw [line width=1.5pt, dash pattern=on 3\pgflinewidth off 1.5\pgflinewidth] (6.25,12.25) .. controls (6.5,12.5) and (6.375,12.5) .. (6.625,12.625);
\draw [line width=1.5pt, dash pattern=on 3\pgflinewidth off 1.5\pgflinewidth] (6.625,12.625) .. controls (6.875,12.75) and (6.875,12.625) .. (7.125,12.625);
\draw [line width=1.5pt, dash pattern=on 3\pgflinewidth off 1.5\pgflinewidth] (7.125,12.625) .. controls (7.375,12.625) and (7.375,12.625) .. (7.625,12.5);
\draw [line width=1.5pt, dash pattern=on 3\pgflinewidth off 1.5\pgflinewidth] (4.5,9) .. controls (4.5,8.875) and (4.625,8.75) .. (4.75,8.5);
\draw [line width=1.5pt, short] (10,4.375) .. controls (10.25,4.625) and (10.375,4.625) .. (10.375,4.75);
\draw [line width=1.5pt, short] (10.375,4.75) .. controls (10.375,5) and (10.25,5) .. (10.375,5.25);
\draw [line width=1.5pt, short] (10.375,5.25) .. controls (10.5,5.5) and (10.5,5.625) .. (10.625,5.75);
\draw [line width=1.5pt, short] (10.625,5.75) .. controls (10.875,6) and (10.75,6) .. (11,6.125);
\draw [line width=1.5pt, short] (11,6.125) .. controls (11.25,6.25) and (11.25,6.125) .. (11.375,6.25);
\draw [line width=1.5pt, short] (12.25,8.5) .. controls (12.375,8.25) and (12.25,8.25) .. (12.25,8);
\draw [line width=1.5pt, short] (12.25,8) .. controls (12.25,7.75) and (12.25,7.75) .. (12.125,7.5);
\draw [line width=1.5pt, short] (12.125,7) .. controls (12.125,6.875) and (12,6.75) .. (11.875,6.5);
\draw [line width=1.5pt, short] (11.875,6.5) .. controls (11.625,6.375) and (11.625,6.375) .. (11.375,6.25);
\draw [line width=1.5pt, short] (12.125,7.5) .. controls (12,7.25) and (12.125,7.25) .. (12.125,7);
\draw [ fill={rgb,255:red,0; green,0; blue,0}, fill opacity=1,line width=1.5pt ] (12.125,13.25) circle (0.125cm);
\draw [line width=1.5pt, dash pattern=on 3\pgflinewidth off 1.5\pgflinewidth] (12,9) .. controls (12,9.25) and (11.875,9.375) .. (11.875,9.5);
\draw [line width=1.5pt, dash pattern=on 3\pgflinewidth off 1.5\pgflinewidth] (11.875,9.5) .. controls (11.875,9.75) and (11.75,9.75) .. (11.875,10);
\draw [line width=1.5pt, dash pattern=on 3\pgflinewidth off 1.5\pgflinewidth] (11.875,10) .. controls (12,10.25) and (11.875,10.25) .. (12,10.5);
\draw [line width=1.5pt, dash pattern=on 3\pgflinewidth off 1.5\pgflinewidth] (12,10.5) .. controls (12.125,10.75) and (12.25,10.75) .. (12.25,11);
\draw [line width=1.5pt, dash pattern=on 3\pgflinewidth off 1.5\pgflinewidth] (12.25,11) .. controls (12.375,11.25) and (12.25,11.375) .. (12.375,11.5);
\draw [line width=1.5pt, dash pattern=on 3\pgflinewidth off 1.5\pgflinewidth] (12.375,11.5) .. controls (12.625,11.75) and (12.5,11.75) .. (12.75,11.875);
\draw [line width=1.5pt, dash pattern=on 3\pgflinewidth off 1.5\pgflinewidth] (12.75,11.875) .. controls (13,12) and (13,11.875) .. (13.25,12);
\draw [line width=1.5pt, dash pattern=on 3\pgflinewidth off 1.5\pgflinewidth] (13.25,12) .. controls (13.5,12.125) and (13.5,12.125) .. (13.75,12.25);
\draw [line width=1.5pt, dash pattern=on 3\pgflinewidth off 1.5\pgflinewidth] (13.75,12.25) .. controls (14,12.5) and (13.875,12.5) .. (14.125,12.625);
\draw [line width=1.5pt, dash pattern=on 3\pgflinewidth off 1.5\pgflinewidth] (14.125,12.625) .. controls (14.375,12.75) and (14.375,12.625) .. (14.625,12.625);
\draw [line width=1.5pt, dash pattern=on 3\pgflinewidth off 1.5\pgflinewidth] (14.625,12.625) .. controls (14.875,12.625) and (14.875,12.625) .. (15.125,12.5);
\draw [line width=1.5pt, dash pattern=on 3\pgflinewidth off 1.5\pgflinewidth] (12,9) .. controls (12,8.875) and (12.125,8.75) .. (12.25,8.5);
\node [font=\fontsize{22.8pt}{29.6pt}\selectfont, inner xsep=0.080cm, inner ysep=0.085cm, rounded corners=0.020cm] at (0,13) {$\ell$};
\node [font=\fontsize{22.8pt}{29.6pt}\selectfont, inner xsep=0.080cm, inner ysep=0.085cm, rounded corners=0.020cm] at (5.25,8.5) {$p$};
\node [font=\fontsize{22.8pt}{29.6pt}\selectfont, inner xsep=0.080cm, inner ysep=0.085cm, rounded corners=0.020cm] at (12.625,8.5) {$p$};
\node [font=\fontsize{22.8pt}{29.6pt}\selectfont, inner xsep=0.080cm, inner ysep=0.085cm, rounded corners=0.020cm] at (11.625,13.375) {$q$};
\draw [ fill={rgb,255:red,0; green,0; blue,0}, fill opacity=1, line width=1pt ] (12.125,8.625) circle (0.125cm);
\end{circuitikz}
}%
\caption{Illustration of intuitive evolution of the Brownian snake. 1) The initial snake. 2) The path from $p$ to $\ell$ is erased. 3) The path is extended again to length $q$ using the growth process. In panels 2 and 3 the erased path is shown in dashed lines for reference. A circle indicates the current endpoint of the branch.}
\label{fig: Brownian snake}
\end{figure}

We have now specified two distributions: the distribution $\gamma_s^l(dp, dq)$ for the length process, and the distribution $R_{p,q}(\omega,d\omega')$ arising from the growth process. These are combined in the following definition. 

\begin{definition}
    The \textit{Brownian-snake} is the Markov process in $\mathcal{W}$, denoted by $(\omega_s,s\geq0)$, whose transition kernels $\mathcal{Q}_s$ are given by 
    \begin{align} \label{Eqn: Snake transition kernel}
        \mathcal{Q}_s(\omega,d\nu) = \int_\mathbb{R} \int_\mathbb{R} \gamma_s^{\ell_\omega}(dp,dq) \, R_{p,q}(\omega,d\nu) \,.
    \end{align}
\end{definition}

We break down this transition kernel to provide some intuition behind the construction of the process. Starting at $t=0$ from $\omega \in \W$ with length $\ell_\omega$, run the length process up to time $t=s$. Again denote by $q=\ell_t$ and $p = \inf_{0 \leq s \leq t}\ell_s$. Then the path $\nu$ is obtained by first restricting $(\omega(u) : u \in [0,\ell_\omega])$ to $\omega^{\leq p} = (\omega(u) : u \in [0,p])$. Then generate a path of $\xi$ starting from $\omega(p)$ and append this to $\omega^{\leq p}$ to obtain $\nu$. The distribution $\gamma$ determines the behaviour of the length process, then the distribution $R$ ensures the correct behaviour of the path. 

For our model of dendrites, the length process determines if the dendrite is growing or retracting and the growth process describes how it extends in space when it is growing. 

\subsubsection{Generator} \label{Section: Generator}

In this section we introduce the generator of the Brownian snake. The advantage of this is that we may use the generator for the single Brownian snake as a building block to construct a more complex process tracking multiple snakes, then introduce branching by adding a new jump component to the generator. It is significantly easier to combine generators in this way, rather than attempting to define more complex processes through their transition kernels. This construction follows a similar pattern to other branching particle systems, except that here our `particles' are dendrite branches. The generator of the Brownian snake is studied in greater depth in \cite{dhersin2000stochastic,abraham2002representations}, which also provide some key results of this section. The characterisation of stochastic processes via their generators is covered extensively in \cite{ethier2009markov}, which develops ideas from \cite{stroock2007multidimensional}. 

Intuitively, the generator $\mathcal{A}$ characterises the stochastic process $\omega_t$ through the evolution of observables $f(\omega_t)$ rather than through its probability distribution. Denote by $B(\W)$ the set of bounded functions on the space $\W$. The generator can be defined as the limit (where it exists) of
\begin{align} \label{Eqn: Definition of generator}
    \mathcal{A}f(\omega) =  \lim_{t\rightarrow0} \frac{\mathbb{E} \big[f(\omega_t)\,|\, \omega_0 = \omega\big] - f(\omega)}{t} \,.
\end{align}
for $f \in B(\W)$. Those functions $f$ for which this limit exist are called the domain of $\mathcal{A}$, denoted $\mathcal{D}(\mathcal{A}) \subset B(\W)$. The generator describes the infinitesimal change in these functions/observables over time. If the domain $\mathcal{D}(\mathcal{A})$ is suitably large then the generator $\mathcal{A}$ is sufficient to uniquely characterise the process $\omega_t$. The generator itself maps functions $f \in \mathcal{D}(\mathcal{A})$ to bounded functions $\mathcal{A}f$. 

This can be made more rigorous using the theory of martingales. A martingale is a stochastic process in which the expected value at some future time, conditioned on all events so far (referred to formally as the natural filtration), is the current value of the process. The following definition is taken from \cite{ethier2009markov}. 
\begin{definition}
    A solution to the \textit{martingale problem} for a generator $\mathcal{A}: \mathcal{D}(\mathcal{A}) \rightarrow B(\W)$ is a measurable stochastic process $\omega_t$ with state space $\W$, defined on some probability space $(\Omega, \mathcal{F},P)$, such that for all $f \in \mathcal{D}(\mathcal{A})$, 
    \begin{align*}
        f(\omega_t) - f(\omega_0)- \int_0^t \mathcal{A}f(\omega_s) \, ds \,,
    \end{align*}
    is a martingale with respect to its natural filtration. We say the martingale problem for $\mathcal{A}$ is \textit{well-posed} if there exists a unique solution to the martingale problem.
\end{definition}
That is, if the martingale problem for the generator $\mathcal{A}$ has a unique solution $\omega_t$ then we can say that the generator $\mathcal{A}$ characterises/defines the process $\omega_t$. As we wish to use the generator of the Brownian snake as a building block to construct more complex processes, we do not strictly need to know its form or its domain, only that the corresponding martingale problem is well-posed and that its unique solution is given by the Brownian snake. 

From the transition kernels $Q_t$ \eqref{Eqn: Snake transition kernel} we can determine expectations
\begin{align*}
    \mathbb{E} \big[f(\omega_t)\,|\, \omega_0 = \omega\big] = \int_\mathcal{W} f(\nu) \, Q_t(\omega,d\nu)
\end{align*}
and define the generator $\mathcal{A}$ as in \eqref{Eqn: Definition of generator}. As we define the generator from the transition probabilities, it follows that the Brownian snake $\omega_t$ is a solution to the martingale problem \cite[Proposition 4.1.7]{ethier2009markov}. However, it remains to show that this solution is unique.  

\begin{proposition} \label{Prop: Martingale problem for Brownian snake well-posed}
    The martingale problem for $\mathcal{A}$ defined by \eqref{Eqn: Definition of generator} is well-posed. 
\end{proposition}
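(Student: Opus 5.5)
The plan is to deduce well-posedness from the fact that the Brownian snake is a Feller process on $\W$ and that $\mathcal{A}$ is the generator of its Feller semigroup $(\mathcal{Q}_t)_{t\ge0}$. The standard result is \cite[Theorem 4.4.1 and Proposition 4.1.7]{ethier2009markov}. It says that if $\mathcal{A}$ generates a strongly continuous contraction semigroup on a closed subspace $L\subset B(\W)$ that is separating, then the martingale problem for $\mathcal{A}$ has at most one solution for each initial distribution. Existence is already given by the snake itself, as noted before the proposition. So the work splits into two parts. First, show that $\mathcal{Q}_t$ maps the bounded continuous functions vanishing appropriately (or, since $X$ is compact, bounded uniformly continuous functions on $\W$) into themselves. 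Second, show that $\mathcal{Q}_t f\to f$ uniformly as $t\to0$, so that the semigroup is strongly continuous on this space.

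For the first step I would follow Le Gall \cite{gall1999spatial}, who proves that the snake with reflecting Brownian lifetime is a continuous strong Markov process on $\W$. To get continuity of $\omega\mapsto\mathcal{Q}_t f(\omega)$, couple two initial paths $\omega,\omega'$ with nearby lengths using the same lifetime increments, so that $(p,q)$ and $(p',q')$ differ by at most $|\ell_\omega-\ell_{\omega'}|$. Then use the Feller property of $\xi$ (Assumption 2) to compare the laws of the paths of $\xi$ started at $\omega(p)$ and $\omega'(p')$. Assumption 3, uniform continuity in the starting point, controls the extra length of path appearing from the mismatch $p-p'$.

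For the second step, note that as $t\to0$ we have $\ell_t-p\to0$ and $q-p\to0$ in probability, uniformly in the starting length, since reflected Brownian motion has uniformly small oscillations. We also have $d_\W(\omega,\nu)\le |\ell_\omega-q| + \sup_{u\le \ell_\omega-p}d_X(\omega(p+u),\omega(p)) + \sup_{r\le q-p} d_X(\omega(p),\xi_r)$. The last term is uniformly small by Assumption 3. The middle term is a modulus of continuity of $\omega$ itself, and it is not uniform over $\omega\in\W$. This is the main obstacle: strong continuity holds pointwise but not uniformly over all of $\W$, since $\W$ is not locally compact.

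I would handle this by working with the martingale-problem uniqueness criterion that needs only pointwise (bounded) convergence. Concretely, I would use the bp-closure version \cite[Theorem 4.4.1 with Remark/Corollary 4.4.3]{ethier2009markov}: uniqueness holds if, for each $\lambda>0$, the range of $\lambda-\mathcal{A}$ is separating. That range contains the resolvents $(\lambda-\mathcal{A})R_\lambda f=f$ for all bounded continuous $f$, where $R_\lambda f=\int_0^\infty e^{-\lambda t}\mathcal{Q}_tf\,dt$. The ingredients are that $\mathcal{Q}_t$ preserves continuity (step one) and that $\mathcal{Q}_tf\to f$ pointwise boundedly. The latter follows from the displayed bound for each fixed $\omega$. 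Together these give $R_\lambda f\in\mathcal{D}(\mathcal{A})$ in the bp-sense with $(\lambda-\mathcal{A})R_\lambda f=f$. Since $\mathcal{W}$ is Polish, bounded continuous functions are separating, which yields uniqueness of one-dimensional distributions and hence, by \cite[Theorem 4.4.2]{ethier2009markov}, of the whole law. Alternatively, one can cite the explicit identification of the snake generator and the well-posedness result in \cite{dhersin2000stochastic,abraham2002representations}.
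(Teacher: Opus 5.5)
Your argument is correct in outline, but it follows a different route from the paper. The paper never works with the semigroup. It quotes \cite{dhersin2000stochastic}, where two things are shown: the explicit class $D$ of functions $F(\omega)=\int_0^{\ell_\omega} g(\omega^{\leq r})\,dr$ lies in $\mathcal{D}(\mathcal{A})$, and any process solving the martingale problem on $D$ alone has the law of the Brownian snake. Since every solution for $\mathcal{A}$ is in particular a solution on $D$, uniqueness is immediate, and the paper then invokes \cite[Theorem 4.4.2]{ethier2009markov}. You instead exploit the fact that $\mathcal{A}$ is built from $\mathcal{Q}_t$. Once $\mathcal{Q}_t$ maps $C_b(\mathcal{W})$ into itself and the snake has continuous paths, the resolvents $R_\lambda h$ with $h\in C_b(\mathcal{W})$ lie in the domain and satisfy $(\lambda-\mathcal{A})R_\lambda h=h$. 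So the range of $\lambda-\mathcal{A}$ contains a separating class for every $\lambda>0$.

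Your route does not depend on the explicit form of the generator. Given Feller-type continuity and path regularity of the corresponding snake, it would therefore extend to the general length processes of Section~\ref{Section: Alternative length processes}, where the paper concedes that uniqueness is a technical gap. The paper's route gives a stronger and more usable statement: uniqueness already holds on the small explicit class $D$. Uniqueness on the full semigroup-defined domain is close to automatic, and it would not help you recognise some other process (for instance a limit of the segment discretisation) as a Brownian snake.

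One step should be tidied. \cite[Theorem 4.4.1]{ethier2009markov} needs $\mathcal{R}(\lambda-A')=\overline{\mathcal{D}(A')}$ in the sup norm, i.e.\ strong continuity on $L$. That is exactly what you gave up on $C_b(\mathcal{W})$, so the criterion you use is best proved directly. For any solution $X$ and $h\in C_b(\mathcal{W})$, the martingale property applied to $(R_\lambda h,\lambda R_\lambda h-h)$ gives $\int_0^\infty e^{-\lambda t}\,\mathbb{E}[h(X_t)]\,dt=\mathbb{E}[R_\lambda h(X_0)]$ for all $\lambda>0$. Since solutions are only assumed measurable, this fixes $\mathbb{E}[h(X_t)]$ only for almost every $t$. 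To reach every $t$, apply the identity to $h=R_{\lambda'}k$ with $k\in C_b(\mathcal{W})$; for this $h$, the map $t\mapsto\mathbb{E}[R_{\lambda'}k(X_t)]$ is continuous by the martingale property. Then let $\lambda'\to\infty$, using that $\lambda'R_{\lambda'}k\to k$ boundedly and pointwise, which follows from path continuity of the snake.
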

\begin{proof}
    By construction, the Brownian snake is a solution of the martingale problem \cite[Proposition 4.1.7]{ethier2009markov}. For the reverse direction, it is shown in \cite{dhersin2000stochastic} that the set of test functions 
    \begin{align*}
        D = \bigg\{ F(\omega) := \int^{\ell_\omega}_0 g\big(\omega^{\leq r}\big) \, dr \,\bigg|\,g \in C_b(\mathcal{W}) \bigg\} \,,
    \end{align*}
    lie in the domain $\mathcal{D}(\mathcal{A})$. Moreover, any process $(W_s,s\geq0)$ satisfying the martingale problem on $D$ has the law of a Brownian snake \cite[Theorem 3]{dhersin2000stochastic}. From this we apply Theorem 4.4.2 of \cite{ethier2009markov} to conclude that the martingale problem for $\mathcal{A}$ is well-posed and that its solution is the Brownian snake. 
\end{proof}

This means that the generator $\mathcal{A}$ of the Brownian snake is well-defined and characterises the process $\omega_t$. We can therefore think of the Brownian snake as being defined either by the transition kernel $Q_t$ \eqref{Eqn: Snake transition kernel} or the generator $\mathcal{A}$ \eqref{Eqn: Definition of generator}. 

\begin{remark}
    Since the spatial motion is continuous and the length process is a reflected Brownian motion, so is almost surely continuous, the Brownian snake will have almost surely continuous sample paths. 
\end{remark}

\begin{remark}
    A core of $\mathcal{D}(\mathcal{A})$ is a dense subset of $\mathcal{D}(\mathcal{A})$; it is often useful to define a generator by its action on a core, then extend this to the full domain. However, it is \textbf{not} claimed here that $D$ forms a core of $\mathcal{D}(\mathcal{A})$. Instead that $D$ is a subset of $\mathcal{D}(\mathcal{A})$ and is sufficient to provide uniqueness of one-dimensional distributions. As a result, the behaviour of the generator on $D$, discussed further in Sections 2 and 3 of \cite{dhersin2000stochastic} and below in Appendix \ref{Appendix: Absorbed Brownian motion}, is indicative of the behaviour of the generator $\mathcal{A}$ but does not fully describe it. As noted above, it is not necessary in this case to describe the behaviour of the generator or its domain, only to show that the martingale problem is well-posed.
\end{remark} 

The following Proposition \ref{Prop: D(A) separating} will be required to add a jump process corresponding to the creation/removal of branches. Firstly we state the following Lemma \ref{Lemma: cylinder functions in D(A)}, which is proven in Appendix \ref{Appendix: Cylinder functions}.

\begin{lemma} \label{Lemma: cylinder functions in D(A)}
    The class of cylinder functions
    \begin{align} \label{Def: cylinder functions}
        D_c = \Big\{ F(\omega) := \varphi\big( \ell_\omega, \omega(u_1 \wedge \ell_\omega), \dots, \omega(u_k \wedge \ell_\omega) \big) \,\Big|\, k \in \mathbb{N}, u_1\leq \dots\leq u_k,  \varphi \in C^2(\mathbb{R}^{k+1}) \Big\} \,,
    \end{align}
    lie in the domain $\mathcal{D}(\mathcal{A})$. 
\end{lemma}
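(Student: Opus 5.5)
The plan is to compute the limit in \eqref{Eqn: Definition of generator} directly from the transition kernel \eqref{Eqn: Snake transition kernel}, conditioning on the pair $(p,q)$ of the length process. Fix $F\in D_c$ with data $k$, $u_1\leq\dots\leq u_k$ and $\varphi$. I will first reduce to the case where $F$ is bounded. The coordinates $\omega(u_i\wedge\ell_\omega)$ lie in the compact set $X$, and I intend to modify $\varphi$ outside a compact range of $\ell$ without changing the behaviour on the event of interest. Then, for fixed $\omega$, I write
\begin{align*}
\mathbb{E}\big[F(\omega_t)\big]-F(\omega)=\int\gamma_t^{\ell_\omega}(dp,dq)\int R_{p,q}(\omega,d\nu)\,\big(F(\nu)-F(\omega)\big),
\end{align*}
and split the coordinates $u_i$ into two groups: the \emph{interior} coordinates with $u_i<\ell_\omega$, and the \emph{tip} coordinates with $u_i\geq\ell_\omega$.

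\textbf{Interior coordinates and the length variable.} For an interior coordinate, property b) of $R_{p,q}$ gives $\nu(u_i)=\omega(u_i)$ whenever $p>u_i$. The event $\{p\leq u_i\}$ requires the reflected Brownian motion to descend a distance at least $\ell_\omega-u_i>0$ within time $t$. By the reflection principle this event has probability $O(e^{-c/t})=o(t)$, and since $F$ is bounded it does not contribute to the limit. The dependence on $\ell_\omega$ is handled by It\^o's formula for reflected Brownian motion, applied to $\varphi$ in its first argument. This yields $\tfrac12\partial_{\ell}^2\varphi$ plus a local-time term $\partial_\ell\varphi\big|_{\ell=0}\,dL_t$. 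When $\ell_\omega>0$ the local-time term is $o(t)$. At $\ell_\omega=0$ it is of order $\sqrt t$, so I will need the Neumann-type condition $\partial_\ell\varphi(0,\cdot)=0$, or else to interpret the lemma on the set $\{\ell>0\}$. I would check how \cite{dhersin2000stochastic} handles the analogous point for the class $D$ and follow their convention.

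\textbf{Tip coordinates (the main obstacle).} If $u_i\geq\ell_\omega$, the relevant coordinate of $\nu$ is $\nu(u_i\wedge q)$. For $u_i>\ell_\omega$ this equals the new endpoint $\hat\nu$, which is $\xi_{q-p}$ started from $\omega(p)$. Here $q-p$ is typically of order $\sqrt t$, and $\omega(p)$ differs from $\hat\omega$ by the displacement of $\omega$ over $[p,\ell_\omega]$. A naive Taylor expansion therefore produces first-order terms of size $\mathbb{E}[q-p]\sim\sqrt t$, and these do not vanish after division by $t$ unless they cancel. I would try to show that they do cancel through the time-reversal symmetry of the snake. Under $\gamma_t^{\ell}$, the pair $(\ell-p,q-p)$ is exchangeable. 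Moreover, the erased segment $\omega|_{[p,\ell]}$ and the regrown segment $\nu|_{[p,q]}$ are both paths emanating from $\omega(p)$; conditionally on $\omega$ the former is deterministic, so equality in law would have to be sought in an averaged sense. I expect that this symmetry, combined with a second-order expansion of $\varphi$ and the uniform continuity in Assumption \ref{Assumption: continuous spatial motion}, leaves a remainder of order $\mathbb{E}[(q-p)^2+(\ell-p)^2]=O(t)$.

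This step is the one I expect may fail for a general Feller $\xi$. If it does, the fallback is to prove the lemma for a $\varphi$ that does not depend on the tip coordinates, for instance by choosing the $u_i$ strictly below $\ell_\omega$ locally. The required separating property can then be recovered by approximation, since the cylinder functions with $u_k<\ell_\omega$ already determine the law of $\omega^{\leq r}$ for every $r<\ell_\omega$.
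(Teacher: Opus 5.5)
\textbf{The gap is the tip-coordinate step, and it cannot be filled.} Membership in $\mathcal{D}(\mathcal{A})$ requires the limit at every fixed $\omega$, and at a fixed $\omega$ the regrown segment has nothing to cancel against. Your remainder bound $O(\mathbb{E}[(q-p)^2+(\ell-p)^2])$ tacitly treats the displacement along a path of length $s$ as $O(s)$. For a diffusive $\xi$, however, the regrown tip produces a generator term that is \emph{linear} in $q-p$.

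Here is a concrete counterexample. Let $\xi$ be reflected Brownian motion in a closed ball, take $k=1$ and $\varphi(\ell,x)=\psi(x)$ with $\psi$ smooth and compactly supported in the interior. Choose $\omega$ constant on $[\ell_\omega-\delta,\ell_\omega]$, with $0<\delta<\ell_\omega<u_1$, $\hat\omega$ interior and $\Delta\psi(\hat\omega)\neq 0$.
\begin{itemize}
\item Outside an event of probability $O(e^{-c/t})$ we have $p>\ell_\omega-\delta$ and $q<u_1$. Hence $\omega(p)=\hat\omega$ and $F(\omega_t)=\psi(\hat\omega_t)$.
\item Conditionally on $(p,q)$, $\hat\omega_t$ has the law of $\xi_{q-p}$ under $\Pi_{\hat\omega}$.
\item By L\'evy's theorem $q-p$ has the law of $|B_t|$ up to that event, so $\mathbb{E}[q-p]=\sqrt{2t/\pi}+O(e^{-c/t})$.
\end{itemize}
Therefore
\[
\frac1t\big(\mathbb{E}[F(\omega_t)]-F(\omega)\big)=\frac{\Delta\psi(\hat\omega)}{\sqrt{2\pi t}}+O(1)\longrightarrow\pm\infty .
\]
The erased segment contributes nothing here, so the exchangeability of $(\ell-p,q-p)$ buys nothing. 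For a Brownian-type $\omega$ the erased part even adds a term $\nabla\psi(\hat\omega)\cdot\mathbb{E}_p[\omega(p)-\hat\omega]$, typically of order $t^{1/4}$. A cancellation ``in an averaged sense'' (e.g.\ when $\omega$ is itself drawn from $\Pi$) says nothing about the pointwise limit. So the lemma as stated is false for general $\varphi\in C^2$. As you noticed, the length variable alone also forces a Neumann condition $\partial_1\varphi(0,\cdot)=0$.

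\textbf{Comparison with the paper.} The paper's proof uses the same case split but does not supply the missing idea either.
\begin{itemize}
\item In the interior case it asserts $F(\omega_t)=F(\omega)$, overlooking the explicit $\ell$-argument, which you handle correctly with It\^o's formula.
\item In the tip case it takes $\hat\omega_t\sim\xi_r$ from $\hat\omega$ with $r=\ell_t-\ell$; the correct law is $\xi_{q-p}$ from $\omega(p)$.
\item It then discards the $\hat\omega_t-\hat\omega$ terms because $r=O(\sqrt t)$. That is exactly the step the computation above refutes.
\end{itemize}

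\textbf{What your fallback actually gives.} The fallback is the right move, but it proves a different lemma and should be stated as one. The $u_i$ are fixed while $\omega$ ranges over all of $\W$, so ``choosing $u_i$ below $\ell_\omega$ locally'' must be implemented by a cutoff in the length. Use two families:
\begin{itemize}
\item $F(\omega)=\chi(\ell_\omega)\,\varphi(\omega(u_1),\dots,\omega(u_k))$ with $\chi\in C^2$ vanishing on $[0,u_k+\delta]$. Uniformly in $\omega$, outside an event of probability $O(e^{-c/t})$, either $F(\omega_t)=F(\omega)=0$, or $p>u_k$ and the $\omega(u_i)$ are untouched.
\item $F(\omega)=\chi(\ell_\omega)\psi(\omega(0))$ with $\chi'(0)=0$, and $\chi'(\ell_{\max})=0$ if the upper barrier is used. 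Here $\omega(0)$ is never modified.
\end{itemize}
In both cases $\mathcal{A}F=\tfrac12\chi''(\ell_\omega)$ times the spatial factor. These functions determine the law of $(\ell_\omega,\omega(0))$ and the finite-dimensional laws on $\{\ell_\omega>u_k\}$. Hence they separate measures on $\W$, which is all that Proposition \ref{Prop: D(A) separating} uses.
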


\begin{definition}
    The following definition is taken from \cite[Section 3.4]{ethier2009markov}. A set of functions $D \subset B(X)$ over a space $X$ is \textit{separating} if for any measures $\mu$ and $\nu$ on $X$, if 
    \begin{align*}
        \int f \, d\mu = \int f \, d\nu 
    \end{align*}
    for all $f \in D$ then $\mu = \nu$. That is, $D$ is sufficiently large that measures agreeing on $D$ must agree on the whole space. 
\end{definition}

This separability is important for the domain $\mathcal{D}(\mathcal{A})$ of a generator as it gives an indication that the domain is sufficiently large and therefore that the martingale problem may have a unique solution. This property will be required later in Section \ref{Section: New branches} when adding additional components to the generator. 

\begin{proposition} \label{Prop: D(A) separating}
    The domain $\mathcal{D}(\mathcal{A})$ is separating on $\W$. 
\end{proposition}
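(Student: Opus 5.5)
By Lemma \ref{Lemma: cylinder functions in D(A)}, the cylinder class $D_c$ lies in $\mathcal{D}(\mathcal{A})$. It therefore suffices to show that $D_c$ itself is separating on $\W$, since any superset of a separating class is separating. The plan is the standard one for path spaces. First show that finite-dimensional distributions determine a measure on $\W$. Then show that integrals against $D_c$ determine those finite-dimensional distributions.

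\textbf{Step 1: cylinder sets generate the Borel $\sigma$-algebra.} For each finite tuple $u_1\leq\dots\leq u_k$, consider the map
\begin{align*}
    \pi_{u}(\omega) = \big(\ell_\omega,\omega(u_1\wedge\ell_\omega),\dots,\omega(u_k\wedge\ell_\omega)\big).
\end{align*}
Each $\pi_u$ is continuous for $d_\W$, because $\omega\mapsto\omega(u\wedge\ell_\omega)$ is continuous by the definition of $d_\W$ together with uniform continuity of paths on compact intervals. Since $\W$ is Polish and hence separable, the open balls of $d_\W$ generate the Borel $\sigma$-algebra. A ball in $d_\W$ can be written as a countable intersection of cylinder conditions. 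Indeed, the supremum over $t\geq0$ can be replaced by a supremum over rational $t$, because $t\mapsto\omega(t\wedge\ell_\omega)$ is continuous. The sets generated by the $\pi_u$ form a $\pi$-system, so by Dynkin's $\pi$--$\lambda$ theorem two measures agreeing on all of them agree on $\mathcal{B}(\W)$.

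\textbf{Step 2: $D_c$ determines the laws of the vectors $\pi_u$.} Fix $u$ and let $\mu,\nu$ be finite measures on $\W$ with $\int F\,d\mu=\int F\,d\nu$ for every $F\in D_c$. Then the pushforwards $\mu\circ\pi_u^{-1}$ and $\nu\circ\pi_u^{-1}$ integrate every $\varphi\in C^2$ equally. These pushforwards are supported on $[0,\infty)\times X^k$.
\begin{itemize}
    \item The coordinates in $X$ are bounded, since $X$ is compact. I will take $X\subset\mathbb{R}^m$, adjusting the dimension of $\varphi$ to match the paper's $C^2(\mathbb{R}^{k+1})$ convention.
    \item The length coordinate is unbounded. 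This is handled by using bounded $C^2$ test functions such as $\varphi(x)=e^{i\langle\theta,x\rangle}$, taking real and imaginary parts, or by using compactly supported smooth $\varphi$.
\end{itemize}
Characteristic functions, or alternatively $C_c^\infty$ functions, which are dense enough to determine Radon measures on $\mathbb{R}^{N}$, then give $\mu\circ\pi_u^{-1}=\nu\circ\pi_u^{-1}$. Combining this with Step 1 yields $\mu=\nu$.

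\textbf{Main obstacle.} The delicate point is Step 1: checking that the $\sigma$-algebra generated by the evaluations $\omega(u\wedge\ell_\omega)$ and $\ell_\omega$ really coincides with the Borel $\sigma$-algebra of $(\W,d_\W)$. The space $\W$ is a union of path spaces of varying length, so this is not a fixed-interval $C([0,T];X)$ argument. I would handle it by writing
\begin{align*}
    d_\W(\omega,\eta) = |\ell_\omega-\ell_\eta| + \sup_{t\in\mathbb{Q}_+} d_X\big(\omega(t\wedge\ell_\omega),\eta(t\wedge\ell_\eta)\big).
\end{align*}
This shows that $\omega\mapsto d_\W(\omega,\eta)$ is measurable with respect to the cylinder $\sigma$-algebra for every fixed $\eta$. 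It follows that open balls, and hence, by separability, all open sets, are cylinder-measurable. A secondary technicality is that $D_c$ requires $\varphi\in C^2$ but not boundedness. If necessary, I would restrict to bounded $C^2$ functions, which lie in $D_c$ and keep $D_c\subset B(\W)$.
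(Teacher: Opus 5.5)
Your proposal is correct and follows the same route as the paper: use Lemma~\ref{Lemma: cylinder functions in D(A)} to place $D_c$ inside $\mathcal{D}(\mathcal{A})$, then argue that the finite-dimensional projections $\omega\mapsto(\ell_\omega,\omega(u_i\wedge\ell_\omega))$ generate the Borel $\sigma$-field, so that $D_c$ (and hence any superset) is separating. The paper simply cites \cite[Example 1.3]{billingsley2013convergence} for the second step, while you write out the details it leaves implicit: the adaptation to the variable-length space $\W$ (via the rational-supremum form of $d_\W$ and a $\pi$--$\lambda$ argument), and the passage from cylinder sets to bounded $C^2$ test functions via characteristic functions.
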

\begin{proof}
    The class of cylinder functions \eqref{Def: cylinder functions} capture the finite dimensional projections of $\W$ and therefore generate the Borel $\sigma$-field, thus this class of functions separates measures \cite[Example 1.3]{billingsley2013convergence}. By Lemma \ref{Lemma: cylinder functions in D(A)} the cylinder functions lie in the domain $\mathcal{D}(\mathcal{A})$ and so $\mathcal{D}(\mathcal{A})$ is also separating.  
\end{proof}

The standard setup described above uses a reflecting Brownian motion for the length process. However in our case this boundary at $\ell=0$ represents dendrite branches that have been fully retracted, so it is more natural to use an absorbing boundary at $\ell=0$ so that these fully retracted branches can be removed. 

\begin{proposition} \label{Prop: well posed with absorbing BC}
    The martingale problem for $\mathcal{A}$, defined by \eqref{Eqn: Definition of generator}, using a Brownian motion absorbed at $\ell=0$ for the length process, is also well-posed. 
\end{proposition}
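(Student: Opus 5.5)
The plan is to follow the proof of Proposition \ref{Prop: Martingale problem for Brownian snake well-posed}, replacing the reflecting length process by one absorbed at zero. First, the snake with absorbed length process is still a well-defined Markov process on $\W$. Let $\gamma^{\ell}_s(dp,dq)$ now denote the joint law of $(\inf_{r\le s}\ell_r,\ell_s)$ for Brownian motion started at $\ell$ and absorbed at $0$. The kernel $\mathcal{Q}_s(\omega,d\nu)=\iint\gamma^{\ell_\omega}_s(dp,dq)R_{p,q}(\omega,d\nu)$ is defined exactly as before. On the event of absorption we have $p=q=0$, and $R_{0,0}(\omega,\cdot)=\delta_{\omega(0)}$ is the trivial path at the root. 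This is a fixed point of $R_{0,0}$, so the trivial paths form an absorbing set.

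The Chapman--Kolmogorov property for $\mathcal{Q}_s$ then follows from two facts. The absorbed Brownian motion is itself Markov, so the pair (running infimum, current value) composes in the same way as in \cite{gall1999spatial}. The kernels $R_{p,q}$ compose in the natural way: erasing back to $p_1$, then to $p_2$, equals erasing back to $p_1\wedge p_2$, and re-growth uses the Markov property of $\xi$. Defining $\mathcal{A}$ through \eqref{Eqn: Definition of generator} from these kernels, \cite[Proposition 4.1.7]{ethier2009markov} again shows that this snake solves the martingale problem for $\mathcal{A}$. That settles existence.

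For uniqueness, I would avoid redoing the analysis of \cite{dhersin2000stochastic}. Instead I would use the setting with drift treated there and in \cite{abraham2002representations}, where the length process is a diffusion whose characteristics may depend on the path. The natural route is a stopping argument. Let $\tau=\inf\{s:\ell_{\omega_s}=0\}$. Before $\tau$ the absorbed and reflected length processes coincide in law, so on $[0,\tau]$ the absorbed snake agrees with the reflected snake stopped at $\tau$.

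Now let $W$ be any solution of the martingale problem for $\mathcal{A}$. Applying optional stopping to the functions in $D$ shows that the stopped process $W_{\cdot\wedge\tau}$ solves the stopped martingale problem. Here $\tau$ is a hitting time of the closed set $\{\ell=0\}$, and paths are continuous. Uniqueness for the stopped martingale problem should follow from well-posedness of the reflected problem (Proposition \ref{Prop: Martingale problem for Brownian snake well-posed}) via \cite[Theorem 4.6.1]{ethier2009markov} on stopped martingale problems. After $\tau$ the process must remain at the trivial path. To see this, apply the martingale property to $F(\omega)=\varphi(\ell_\omega)$ from $D_c$ (Lemma \ref{Lemma: cylinder functions in D(A)}), for which $\mathcal{A}F=\tfrac12\varphi''(\ell_\omega)$ on the absorbed process. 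Taking $\varphi(\ell)=\ell$, suitably truncated to be bounded, $\ell_{W_s}$ is a nonnegative martingale after $\tau$ started at $0$, hence identically zero. Then $W_s=W_\tau$ for $s\ge\tau$, because a path of length zero is determined by $\omega(0)$, which never changes.

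Combining the two pieces, one-dimensional distributions are uniquely determined. Theorem 4.4.2 of \cite{ethier2009markov} then gives well-posedness, exactly as in the reflected case.

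The main obstacle I expect is the uniqueness step before $\tau$. One must check that \cite[Theorem 3]{dhersin2000stochastic} really applies up to the hitting time of zero, since their test class $D$ is built for the reflected setting. One must also confirm that the generator on $D$ does not involve the boundary behaviour at $\ell=0$ before $\tau$. I would first try to verify that $\mathcal{A}F$ for $F\in D$ away from $\ell=0$ has the same form under both boundary conditions, so that the localisation theorem applies directly. Failing that, I would reprove the uniqueness of one-dimensional marginals using the cylinder functions $D_c$ and a duality or Laplace-functional argument.
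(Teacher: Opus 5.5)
Your proposal is correct and is essentially the paper's proof. The paper's proof consists only of observing that the absorbed snake is the reflected snake stopped at the hitting time of $\{\ell_\omega=0\}$, and then invoking \cite[Theorem 4.6.1]{ethier2009markov} to pass well-posedness from Proposition~\ref{Prop: Martingale problem for Brownian snake well-posed} to the stopped problem. Your additional steps are exactly what the paper leaves implicit when it identifies that stopped problem with the martingale problem for the absorbed generator. First, agreement of $\mathcal{A}F$ under the two boundary conditions on the open set $\{\ell_\omega>0\}$ is immediate by coupling the two length processes until $\tau$, since $\mathbb{P}_\ell(\tau\le t)=2(1-\Phi(\ell/\sqrt{t}))=o(t)$; so the obstacle you flag is not a real one, and no fallback argument is needed. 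Second, in your freezing step a concave truncation of $\varphi(\ell)=\ell$ gives a nonnegative supermartingale rather than a martingale, which still suffices. Constancy of $W_s(0)$ for an arbitrary solution also needs a short argument: $F(\omega)=h(\omega(0))$ has $\mathcal{A}F=0$, so $h(W_s(0))$ and $h^2(W_s(0))$ are both martingales.
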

\begin{proof}
    A Brownian motion absorbed at zero can be constructed from a reflecting Brownian motion $\ell_t$ by defining the stopping time $\tau_0 = \inf_{t\geq0}\{\ell_t=0\}$, then defining the absorbing Brownian motion by $\Tilde{\ell}_t := \ell_{t \wedge \tau_0}$. Similarly for the Brownian snake we may define the set $U = \{ \omega \in \W : \ell_\omega = 0\}$ and $\tau_U$ the hitting time of this set. Beginning with a standard Brownian snake $\omega_t$, the stopped process $\Tilde{\omega}_t = \omega_{t \wedge \tau_U}$ corresponds to a Brownian snake whose length is an absorbed Brownian motion. Theorem 4.6.1 of \cite{ethier2009markov} then provides the well-posedness of this stopped martingale problem. 
\end{proof}

The approach in Proposition \ref{Prop: well posed with absorbing BC} uses a stopping time to enforce the boundary condition. It is also possible to begin with this boundary condition using $\gamma_t^\ell(dp,dq)$ given in Lemma \ref{Lemma: gamma for absorbed BM} below in the transition kernel \eqref{Eqn: Snake transition kernel}. 

\begin{lemma} \label{Lemma: gamma for absorbed BM}
Recall that $\gamma_s^\ell(dp, dq)$ is the joint distribution of the pair $(p,q)=(\inf_{0\leq r \leq s} \ell_r, \ell_s)$ at time $s$ for a length process started at $\ell$. When the length process $\ell_t$ is a Brownian motion absorbed at $\ell=0$ we have, 
\begin{align*}
    \gamma_t^\ell(dp,dq) &= \frac{2(\ell+q-2p)}{\sqrt{2\pi t^3}} \, \exp\bigg( -\frac{(\ell+q-2p)^2}{2t}\bigg) \, \mathds{1}_{\{ 0 < p < \ell\wedge q \}}\,dp \, dq \, \\
    &+ 2\Bigg(1 - \Phi\bigg(\frac{\ell}{\sqrt{t}}\bigg)\Bigg) \delta_{(0,0)}(dp,dq) \,.
\end{align*}
\end{lemma}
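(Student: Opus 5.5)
The plan is to reduce everything to the classical reflection principle for a free (unconstrained) Brownian motion $B$ started at $\ell>0$, and then account for absorption by splitting on whether the hitting time $\tau_0=\inf\{t\geq0: B_t=0\}$ has occurred by time $t$. By the construction in Proposition \ref{Prop: well posed with absorbing BC}, the absorbed length process is $\tilde{\ell}_s=B_{s\wedge\tau_0}$. Its running infimum up to time $t$ therefore coincides with $\inf_{r\leq t}B_r$ on $\{\tau_0>t\}$, and equals $0$ on $\{\tau_0\leq t\}$. On the same two events, $\tilde\ell_t = B_t$ and $\tilde\ell_t=0$ respectively. The measure $\gamma_t^\ell$ thus decomposes into an absolutely continuous part coming from $\{\tau_0>t\}$ and an atom at $(0,0)$ coming from $\{\tau_0\leq t\}$.

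For the continuous part, I will first compute, for $p<\ell\wedge q$, the joint law of $(m_t,B_t)$, where $m_t=\inf_{r\leq t}B_r$. Write $\phi_t(x)=(2\pi t)^{-1/2}e^{-x^2/2t}$. Reflecting the path at the first time it hits level $p$ gives
\begin{align*}
    \mathbb{P}_\ell(m_t\leq p,\, B_t\in dq) = \phi_t(\ell+q-2p)\,dq \,, \qquad p< \ell\wedge q \,.
\end{align*}
Differentiating in $p$ then gives the joint density
\begin{align*}
    \partial_p\, \phi_t(\ell+q-2p) = \frac{2(\ell+q-2p)}{t}\,\phi_t(\ell+q-2p) = \frac{2(\ell+q-2p)}{\sqrt{2\pi t^3}}\exp\bigg(-\frac{(\ell+q-2p)^2}{2t}\bigg) \,.
\end{align*}
The event $\{\tau_0>t\}$ is exactly $\{m_t>0\}$, and on it $(\inf_{r\leq t}\tilde\ell_r,\tilde\ell_t)=(m_t,B_t)$. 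The absorbed law on this event is therefore this density restricted to $0<p<\ell\wedge q$, which is the first term of the claimed formula.

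For the atom, on $\{\tau_0\leq t\}$ the absorbed process has $p=q=0$, so this event contributes the mass $\mathbb{P}_\ell(\tau_0\leq t)$ placed at $(0,0)$. By the reflection principle this mass is
\begin{align*}
    \mathbb{P}_\ell(m_t\leq 0)=2\,\mathbb{P}_\ell(B_t\leq 0)=2\big(1-\Phi(\ell/\sqrt t)\big) \,,
\end{align*}
giving the $\delta_{(0,0)}$ term. As a sanity check, I will verify that the total mass of the two parts is $1$. Integrating the density over $p\in(0,\ell\wedge q)$ gives $\phi_t(q-\ell)-\phi_t(q+\ell)$. Integrating this over $q>0$ gives $1-2(1-\Phi(\ell/\sqrt t))$, which is the complement of the atom's mass.

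The main obstacle is justifying the reflection identity rigorously. This requires the strong Markov property at the hitting time of level $p$, together with the symmetry of Brownian increments, and some care that the identity is only used in the region $p<\ell\wedge q$. Once that identity is in place, the remaining steps are a differentiation and a bookkeeping of the two cases.
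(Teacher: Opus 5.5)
Your proposal is correct and follows essentially the same route as the paper. Both use the reflection principle for the joint law of the running minimum and endpoint of a free Brownian motion, differentiate in $p$, and split on $\{\tau_0>t\}$ versus $\{\tau_0\le t\}$, with the atom's mass $\mathbb{P}_\ell(\tau_0\le t)=2(1-\Phi(\ell/\sqrt{t}))$ again coming from reflection. Your identity $\mathbb{P}_\ell(m_t\le p,\,B_t\in dq)=\phi_t(\ell+q-2p)\,dq$ is actually stated more accurately than the paper's intermediate step, whose displayed difference of Gaussians is, as written, the density on $\{m_t>p\}$ rather than on $\{m_t\le p\}$; your total-mass check is also a worthwhile addition.
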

\begin{proof}
    See Appendix \ref{Appendix: Absorbed Brownian motion}.
\end{proof}

This defines an alternative version of the Brownian snake with an absorbing boundary condition at $\ell=0$. The generator for this version is the standard generator multiplied by $\mathds{1}\{\ell_\omega>0\}$ (as the evolution is the same for $\ell_\omega>0$ and there is no further evolution if $\ell_\omega=0$, see Appendix \ref{Appendix: Absorbed Brownian motion} for more details). 

As with the boundary condition at $\ell=0$ we may similarly impose an upper reflecting boundary condition at some $\ell_{\max} \gg 1$. While this is not strictly necessary, it will prove technically convenient in Section \ref{Section: Tracking multiple dendrites} and $\ell_{\max}$ can be chosen to be arbitrarily large. This is also a reasonable condition as protein degradation along branches suggests a natural maximum length \cite{miller1997axon}. 

This completes the setup for the evolution of a single dendrite branch. We next discuss how to track multiple branches and the creation of new branches. 

\subsection{Branching process} \label{Section: branching process}

We begin this section by addressing a key issue arising when combining branching and retraction, referred to as the branching-retraction problem. This is illustrated in Figure \ref{fig: branching-retraction problem}, for simplicity this diagram uses straight dendrite branches but the problem remains for any choice of growth process. 


\begin{figure}[!ht]
\centering
\resizebox{.9\textwidth}{!}{%
\begin{circuitikz}
\tikzstyle{every node}=[font=\fontsize{18.2pt}{23.7pt}\selectfont]
\draw [ line width=1pt ] (11.25,14.6) rectangle (16.25,7.125);
\draw [ line width=1pt ] (1.25,14.6) rectangle (6.25,7.125);
\draw [ line width=1pt ] (16.25,14.6) rectangle (21.25,7.125);
\draw [ line width=1pt ] (6.25,14.6) rectangle (11.25,7.125);
\draw [line width=2pt, short] (2.5,8.125) -- (4.5,13.75);
\draw [ fill={rgb,255:red,0; green,0; blue,0}, fill opacity=1] (2.5,8.125) circle (0.5cm);
\draw [ color={rgb,255:red,33; green,30; blue,202}, draw opacity=1, line width=1.5pt, -{Stealth[scale=1.5]}, ] (3.5,12.5) -- (4,13.75);
\draw [line width=2pt, short] (7.5,8.125) -- (9.6,14.3);
\draw [ fill={rgb,255:red,0; green,0; blue,0}, fill opacity=1] (7.5,8.125) circle (0.5cm);
\draw [line width=2pt, short] (12.5,8.125) -- (14.125,12.875);
\draw [ fill={rgb,255:red,0; green,0; blue,0}, fill opacity=1] (12.5,8.125) circle (0.5cm);
\draw [line width=2pt, short] (17.375,8.125) -- (18.5,11.25);
\draw [ fill={rgb,255:red,0; green,0; blue,0}, fill opacity=1] (17.375,8.125) circle (0.5cm);
\draw [line width=2pt, short] (8.75,11.875) -- (9.625,12.375);
\draw [ color={rgb,255:red,33; green,30; blue,202}, draw opacity=1, line width=1.5pt, -{Stealth[scale=1.5]}, ] (8.625,13.125) -- (9.125,14.375);
\draw [ color={rgb,255:red,148; green,11; blue,11}, draw opacity=1, line width=1.5pt, {Stealth[scale=1.5]}-, ] (13.25,11.75) -- (13.75,13);
\draw [ color={rgb,255:red,33; green,30; blue,202}, draw opacity=1, line width=1.5pt, -{Stealth[scale=1.5]}, ] (9.125,11.5) -- (9.75,11.875);
\draw [line width=2pt, short] (13.75,11.875) -- (15.125,12.75);
\draw [ color={rgb,255:red,33; green,30; blue,202}, draw opacity=1, line width=1.5pt, -{Stealth[scale=1.5]}, ] (14.3,11.75) -- (15.5,12.375);
\draw [line width=2pt, short] (18.75,11.875) -- (20.5,12.875);
\draw [ color={rgb,255:red,33; green,30; blue,202}, draw opacity=1, line width=1.5pt, -{Stealth[scale=1.5]}, ] (19.625,11.875) -- (20.75,12.5);
\draw [ color={rgb,255:red,148; green,11; blue,11}, draw opacity=1, line width=1.5pt, {Stealth[scale=1.5]}-, ] (17.625,10.125) -- (18.125,11.375);
\node [font=\fontsize{22.8pt}{29.6pt}\selectfont, fill={rgb,255:red,255; green,255; blue,255}, fill opacity=1, text opacity=1, inner xsep=0.080cm, inner ysep=0.085cm, rounded corners=0.020cm] at (1.875,13.8) {1)};
\node [font=\fontsize{22.8pt}{29.6pt}\selectfont, fill={rgb,255:red,255; green,255; blue,255}, fill opacity=1, text opacity=1, inner xsep=0.080cm, inner ysep=0.085cm, rounded corners=0.020cm] at (6.875,13.8) {2)};
\node [font=\fontsize{22.8pt}{29.6pt}\selectfont, fill={rgb,255:red,255; green,255; blue,255}, fill opacity=1, text opacity=1, inner xsep=0.080cm, inner ysep=0.085cm, rounded corners=0.020cm] at (11.875,13.8) {3)};
\node [font=\fontsize{22.8pt}{29.6pt}\selectfont, fill={rgb,255:red,255; green,255; blue,255}, fill opacity=1, text opacity=1, inner xsep=0.080cm, inner ysep=0.085cm, rounded corners=0.020cm] at (16.875,13.8) {4)};
\end{circuitikz}
}%
\caption{Diagram of the branching-retraction problem. 1) A single dendrite is initially growing. 2) this `parent' branch produces a growing `child' branch. 3) The parent branch begins to retract. 4) The parent branch retracts to such an extent that the child branch becomes disconnected from the rest of the neuron.}
\label{fig: branching-retraction problem}
\end{figure}

Let $\omega_t$ be an existing growing dendrite (Figure \ref{fig: branching-retraction problem} panel 1) and assume at time $t^*$ we add a branch at the point $\omega_{t^*}(u)$ for some $u \in (0,\ell_{t^*})$ (Figure \ref{fig: branching-retraction problem} panel 2). For now we omit the details of how this branch point is chosen. We now have two processes, the original `parent' branch $\omega_t$ and its `child' branch $\nu_t$, with $\nu_t(0) = \omega_{t^*}(u)$. As the dynamics of $\omega_t$ affect only its endpoint $\hat{\omega}$ the location of the branch point $\omega_{t^*}$ will not move, so this does not create a problem. However, if $\omega_t$ and $\nu_t$ were completely independent, then nothing prevents the parent branch retracting (Figure \ref{fig: branching-retraction problem} panel 3) until it eventually retracts beyond the branch point (Figure \ref{fig: branching-retraction problem} panel 4). If this occurs then the branch $\nu$ would be disconnected from the rest of the neuron. This is a highly unrealistic scenario and therefore a modelling problem that must be addressed. 

There are various ways this problem could be solved. One option is to limit the length process $\ell_\omega$ such that is cannot fall below $u$. This requires that each dendrite also tracks the locations of all branch points along its path, the number of which may change over time. It is not sufficient to track only the latest branch point as, if this branch fully retracts and is removed, it will be necessary to recall information on previous branch points. Moreover, this involves repeatedly updating the boundary condition on the length process, with these updates also depending on the behaviour of other branches, violating the independence between branches and their length processes.  

We propose an alternative method using an `activity' variable $a \in \{0,1\}$ for each branch. If a new branch $\nu$ is created with $\nu(0)=\omega(u)$ then the original branch $\omega$ is split into two parts
\begin{align*}
    \omega^- &= \{\omega^-(s) = \omega(s) \,|\, 0 \leq s \leq u\}\,, \\
    \omega^+ &= \{\omega^+(s) = \omega(u + s) \,|\, 0 \leq s \leq \ell - u\} \,.
\end{align*}
The lower part $\omega^-$ is given activity $a_{\omega^-}=0$, meaning that its length process is frozen at $\ell_{\omega^-} = u$, while the upper part $\omega^+$ retains its current activity level, $a_{\omega^+} = a_\omega$. The new branch $\nu$ will be active, $a_\nu = 1$. 


\begin{figure}[!ht]
\centering
\resizebox{.6\textwidth}{!}{%
\begin{circuitikz}
\tikzstyle{every node}=[font=\fontsize{27.9pt}{36.2pt}\selectfont]
\draw [ color={rgb,255:red,113; green,113; blue,113}, draw opacity=1, line width=2pt, short] (-5,4.375) .. controls (-4.75,4.625) and (-4.625,4.625) .. (-4.625,4.75);
\draw [ color={rgb,255:red,113; green,113; blue,113}, draw opacity=1, line width=2pt, short] (-4.625,4.75) .. controls (-4.625,5) and (-4.75,5) .. (-4.625,5.25);
\draw [ color={rgb,255:red,113; green,113; blue,113}, draw opacity=1, line width=2pt, short] (-4.625,5.25) .. controls (-4.5,5.5) and (-4.5,5.625) .. (-4.375,5.75);
\draw [ color={rgb,255:red,113; green,113; blue,113}, draw opacity=1, line width=2pt, short] (-4.375,5.75) .. controls (-4.125,6) and (-4.25,6) .. (-4,6.125);
\draw [ color={rgb,255:red,113; green,113; blue,113}, draw opacity=1, line width=2pt, short] (-4,6.125) .. controls (-3.75,6.25) and (-3.75,6.125) .. (-3.625,6.25);
\draw [ color={rgb,255:red,113; green,113; blue,113}, draw opacity=1, line width=2pt, short] (-3,9) .. controls (-3,9.25) and (-3.125,9.375) .. (-3.125,9.5);
\draw [ color={rgb,255:red,113; green,113; blue,113}, draw opacity=1, line width=2pt, short] (-3.125,9.5) .. controls (-3.125,9.75) and (-3.25,9.75) .. (-3.125,10);
\draw [ color={rgb,255:red,113; green,113; blue,113}, draw opacity=1, line width=2pt, short] (-3.125,10) .. controls (-3,10.25) and (-3.125,10.25) .. (-3,10.5);
\draw [ color={rgb,255:red,113; green,113; blue,113}, draw opacity=1, line width=2pt, short] (-3,10.5) .. controls (-2.875,10.75) and (-2.75,10.75) .. (-2.75,11);
\draw [ color={rgb,255:red,113; green,113; blue,113}, draw opacity=1, line width=2pt, short] (-2.75,11) .. controls (-2.625,11.25) and (-2.75,11.375) .. (-2.625,11.5);
\draw [ color={rgb,255:red,113; green,113; blue,113}, draw opacity=1, line width=2pt, short] (-2.625,11.5) .. controls (-2.375,11.75) and (-2.5,11.75) .. (-2.25,11.875);
\draw [ color={rgb,255:red,113; green,113; blue,113}, draw opacity=1, line width=2pt, short] (-2.25,11.875) .. controls (-2,12) and (-2,11.875) .. (-1.75,12);
\draw [ color={rgb,255:red,113; green,113; blue,113}, draw opacity=1, line width=2pt, short] (-1.75,12) .. controls (-1.5,12.125) and (-1.5,12.125) .. (-1.25,12.25);
\draw [ color={rgb,255:red,113; green,113; blue,113}, draw opacity=1, line width=2pt, short] (-1.25,12.25) .. controls (-1,12.5) and (-1.125,12.5) .. (-0.875,12.625);
\draw [ color={rgb,255:red,113; green,113; blue,113}, draw opacity=1, line width=2pt, short] (-0.875,12.625) .. controls (-0.625,12.75) and (-0.625,12.625) .. (-0.375,12.625);
\draw [ color={rgb,255:red,113; green,113; blue,113}, draw opacity=1, line width=2pt, short] (-0.375,12.625) .. controls (-0.125,12.625) and (-0.125,12.625) .. (0.125,12.5);
\draw [ color={rgb,255:red,113; green,113; blue,113}, draw opacity=1, line width=2pt, short] (-3,9) .. controls (-3,8.875) and (-2.875,8.75) .. (-2.75,8.5);
\draw [ color={rgb,255:red,113; green,113; blue,113}, draw opacity=1, line width=2pt, short] (-2.75,8.5) .. controls (-2.625,8.25) and (-2.75,8.25) .. (-2.75,8);
\draw [ color={rgb,255:red,113; green,113; blue,113}, draw opacity=1, line width=2pt, short] (-2.75,8) .. controls (-2.75,7.75) and (-2.75,7.75) .. (-2.875,7.5);
\draw [ color={rgb,255:red,113; green,113; blue,113}, draw opacity=1, line width=2pt, short] (-2.875,7.5) .. controls (-3,7.25) and (-2.875,7.25) .. (-2.875,7);
\draw [ color={rgb,255:red,113; green,113; blue,113}, draw opacity=1, line width=2pt, short] (-2.875,7) .. controls (-2.875,6.875) and (-3,6.75) .. (-3.125,6.5);
\draw [ color={rgb,255:red,113; green,113; blue,113}, draw opacity=1, line width=2pt, short] (-3.125,6.5) .. controls (-3.375,6.375) and (-3.375,6.375) .. (-3.625,6.25);
\draw [line width=2pt, dash pattern=on 3\pgflinewidth off 1.5\pgflinewidth] (3.75,4.375) .. controls (4,4.625) and (4.125,4.625) .. (4.125,4.75);
\draw [line width=2pt, dash pattern=on 3\pgflinewidth off 1.5\pgflinewidth] (4.125,4.75) .. controls (4.125,5) and (4,5) .. (4.125,5.25);
\draw [line width=2pt, dash pattern=on 3\pgflinewidth off 1.5\pgflinewidth] (4.125,5.25) .. controls (4.25,5.5) and (4.25,5.625) .. (4.375,5.75);
\draw [line width=2pt, dash pattern=on 3\pgflinewidth off 1.5\pgflinewidth] (4.375,5.75) .. controls (4.625,6) and (4.5,6) .. (4.75,6.125);
\draw [line width=2pt, dash pattern=on 3\pgflinewidth off 1.5\pgflinewidth] (4.75,6.125) .. controls (5,6.25) and (5,6.125) .. (5.125,6.25);
\draw [line width=2pt, dash pattern=on 3\pgflinewidth off 1.5\pgflinewidth] (5.75,9) .. controls (5.75,9.25) and (5.625,9.375) .. (5.625,9.5);
\draw [ color={rgb,255:red,113; green,113; blue,113}, draw opacity=1, line width=2pt, short] (5.625,9.5) .. controls (5.625,9.75) and (5.5,9.75) .. (5.625,10);
\draw [ color={rgb,255:red,113; green,113; blue,113}, draw opacity=1, line width=2pt, short] (5.625,10) .. controls (5.75,10.25) and (5.625,10.25) .. (5.75,10.5);
\draw [ color={rgb,255:red,113; green,113; blue,113}, draw opacity=1, line width=2pt, short] (5.75,10.5) .. controls (5.875,10.75) and (6,10.75) .. (6,11);
\draw [ color={rgb,255:red,113; green,113; blue,113}, draw opacity=1, line width=2pt, short] (6,11) .. controls (6.125,11.25) and (6,11.375) .. (6.125,11.5);
\draw [ color={rgb,255:red,113; green,113; blue,113}, draw opacity=1, line width=2pt, short] (6.125,11.5) .. controls (6.375,11.75) and (6.25,11.75) .. (6.5,11.875);
\draw [ color={rgb,255:red,113; green,113; blue,113}, draw opacity=1, line width=2pt, short] (6.5,11.875) .. controls (6.75,12) and (6.75,11.875) .. (7,12);
\draw [ color={rgb,255:red,113; green,113; blue,113}, draw opacity=1, line width=2pt, short] (7,12) .. controls (7.25,12.125) and (7.25,12.125) .. (7.5,12.25);
\draw [ color={rgb,255:red,113; green,113; blue,113}, draw opacity=1, line width=2pt, short] (7.5,12.25) .. controls (7.75,12.5) and (7.625,12.5) .. (7.875,12.625);
\draw [ color={rgb,255:red,113; green,113; blue,113}, draw opacity=1, line width=2pt, short] (7.875,12.625) .. controls (8.125,12.75) and (8.125,12.625) .. (8.375,12.625);
\draw [ color={rgb,255:red,113; green,113; blue,113}, draw opacity=1, line width=2pt, short] (8.375,12.625) .. controls (8.625,12.625) and (8.625,12.625) .. (8.875,12.5);
\draw [line width=2pt, dash pattern=on 3\pgflinewidth off 1.5\pgflinewidth] (5.75,9) .. controls (5.75,8.875) and (5.875,8.75) .. (6,8.5);
\draw [line width=2pt, dash pattern=on 3\pgflinewidth off 1.5\pgflinewidth] (6,8.5) .. controls (6.125,8.25) and (6,8.25) .. (6,8);
\draw [line width=2pt, dash pattern=on 3\pgflinewidth off 1.5\pgflinewidth] (6,8) .. controls (6,7.75) and (6,7.75) .. (5.875,7.5);
\draw [line width=2pt, dash pattern=on 3\pgflinewidth off 1.5\pgflinewidth] (5.875,7.5) .. controls (5.75,7.25) and (5.875,7.25) .. (5.875,7);
\draw [line width=2pt, dash pattern=on 3\pgflinewidth off 1.5\pgflinewidth] (5.875,7) .. controls (5.875,6.875) and (5.75,6.75) .. (5.625,6.5);
\draw [line width=2pt, dash pattern=on 3\pgflinewidth off 1.5\pgflinewidth] (5.625,6.5) .. controls (5.375,6.375) and (5.375,6.375) .. (5.125,6.25);
\draw [line width=2pt, short] (5.625,9.5) .. controls (5.375,9.625) and (5.25,9.5) .. (5,9.625);
\draw [line width=2pt, short] (5,9.625) .. controls (4.875,9.875) and (4.75,10) .. (4.625,10.125);
\draw [line width=2pt, short] (4.625,10.125) .. controls (4.625,10.375) and (4.625,10.5) .. (4.5,10.625);
\draw [line width=2pt, {Stealth[scale=1.5]}-{Stealth[scale=1.5]}, ] (0,8.125) -- (3,8.125);
\draw [ fill={rgb,255:red,0; green,0; blue,0}, fill opacity=1, line width=2pt ] (4.5,10.625) circle (0.125cm);
\draw [ fill={rgb,255:red,0; green,0; blue,0}, fill opacity=1, line width=2pt ] (8.875,12.5) circle (0.125cm);
\draw [ fill={rgb,255:red,0; green,0; blue,0}, fill opacity=1, line width=2pt ] (0.125,12.5) circle (0.125cm);
\node [font=\fontsize{22.2pt}{28.8pt}\selectfont, fill={rgb,255:red,255; green,255; blue,255}, fill opacity=1, text opacity=1, inner xsep=0.080cm, inner ysep=0.085cm, rounded corners=0.020cm] at (-3.875,9.625) {$\omega$};
\node [font=\fontsize{22.2pt}{28.8pt}\selectfont, fill={rgb,255:red,255; green,255; blue,255}, fill opacity=1, text opacity=1, inner xsep=0.080cm, inner ysep=0.085cm, rounded corners=0.020cm] at (6,12.375) {$\omega^+$};
\node [font=\fontsize{22.2pt}{28.8pt}\selectfont, fill={rgb,255:red,255; green,255; blue,255}, fill opacity=1, text opacity=1, inner xsep=0.080cm, inner ysep=0.085cm, rounded corners=0.020cm] at (6.625,7) {$\omega^-$};
\node [font=\fontsize{22.2pt}{28.8pt}\selectfont, fill={rgb,255:red,255; green,255; blue,255}, fill opacity=0, text opacity=1, inner xsep=0.080cm, inner ysep=0.085cm, rounded corners=0.020cm] at (4.375,9.5) {$\nu$};
\end{circuitikz}
}%
\caption{Illustration of the branching/activity mechanism described in Section \ref{Section: branching process}. Solid black lines show active paths with circles indicating their (active) endpoints. Dashed lines show inactive paths. Solid grey lines show paths that may be either active or inactive, again with circles indicating their (possibly active) endpoints. On the left we have a single dendrite $\omega$ before branching. On the right, the dendrite after branching.}
\label{fig:branching activity}
\end{figure}

An illustration of this is shown in Figure \ref{fig:branching activity}. Solid black lines show active paths with circles indicating their (active) endpoints. Dashed lines show inactive paths. Solid grey lines show paths that may be either active or inactive, again with circles indicating their (possibly active) endpoints. On the left we have $\omega$ before branching. On the right this has been split into the inactive $\omega^-$ and the remainder $\omega^+$, as the new active branch $\nu$ has been added. 

This process is reversible: if $\nu$ were to retract fully, reaching $\ell=0$ and being removed, then $\omega^-$ and $\omega^+$ would be recombined, inheriting the current activity of $\omega^+$. If $\omega^+$ were instead to retract fully, then $\omega^-$ and $\nu$ would be recombined, inheriting the current activity of $\nu$. 

Activity will only affect whether the length process (and thus the endpoint of the dendrite) is active. New branches can still emerge from `inactive' dendrites and branches coming off inactive dendrites may be either active or inactive. Simply multiplying the Brownian snake component of the generator by $a$ makes inactive paths static. 

The branching point $\omega(u)$ is included in both $\omega^-$ and $\omega^+$, meaning that $\hat{\omega}^- = \omega^-(u) = \omega^+(0)$. As the probability of an unrelated branch beginning or ending at exactly this point is zero, we may use this matching of start- and end-points to determine the relationship between branches. For further details see the graph representation in Section \ref{Section: Graph properties} below. 

For each dendrite we now have the pair 
\begin{align*}
    (\omega_t^i, a_t^i) \in \mathcal{S} := \W \times \{0,1\} \,.
\end{align*}
Since $\mathcal{W}$ is a Polish space, $\mathcal{S}$ is also a Polish space with the natural choice of distance metric
\begin{align*}
    d_S\big( (\omega_1,a_1),(\omega_2,a_2)\big) = |a_1-a_2| + d_\W(\omega_1,\omega_2)\,.
\end{align*}

\begin{remark} \label{Remark: on including labels}
    In the same way that the activity of each branch has been added to the state space for a single neuron, it is also possible to include other labels/information. For example, we may wish to store the distance back to the soma as this could affect the growth process. For each branch, add a variable $d^i$ that records the distance (along the dendrite) from $\omega^i(0)$ back to the soma and add this variable to the state space $\mathcal{S}$. If a new branch, with index $j$, is created at $\omega^i(u)$ then set $d^j = d^i + u$. In principle, any desired information or labels could be appended in this way; for another example, see Section \ref{Section: Extensions} for more details on possible model extensions.
\end{remark}

In this section, we complete the construction of our model in three steps: firstly by introducing a measure-valued process to track multiple dendrites, then by adding a jump process corresponding to the creation of new branches, and finally by considering two methods for removing branches with length zero. The final result is the presentation of a generator that defines the stochastic process for the entire neuron. 

\subsubsection{Tracking multiple dendrites} \label{Section: Tracking multiple dendrites}

For a fixed number $N \in \mathbb{N}$ of independent dendrite branches $s_t^i = (\omega_t^i,a_t^i), i=1,\dots,N$, we may consider the joint process as an element $s^{(N)}_t = (s^1_t,\dots,s_t^N) \in S^N$. However, it is more useful to consider the corresponding empirical measures
\begin{align} \label{Eqn: Definition of empirical measure}
    \mu^N = \sum_{i=1}^N \delta\{s^i\}\,,
\end{align}
where $\delta\{s\}$ is a Dirac measure placed at $s \in \mathcal{S}$. This allows the same state space (a subset of the space of measures) to be used for any $N$, allowing the number of branches to change without changing the state space. Mappings of this form into a space of measures are common in the study of measure-valued branching processes and superprocesses \cite{horton2023stochastic,li2011measure}. Note that this measure $\mu^N$ is not a probability measure but tracks the actual position of each branch in the space $\W$. 

\begin{remark}
    While this may form the appropriate state space from a mathematical perspective, from a modelling perspective it may be natural to think of the embedding of these branches into the physical space $X$. Such an embedding is introduced in the graph representation in Section \ref{Section: Graph properties}. 
\end{remark}

For a measure $\mu$ and function $f \in L^1(X)$ denote
\begin{align*}
    \langle f,\mu\rangle = \int_X f(x) \, \mu(dx) \,.
\end{align*}
From \cite{kurtz1998martingale} we expect a generator for $\mu^N$ and domain of the form
\begin{subequations} \label{Eqn: Initial measure-valued generator and domain}
\begin{align}
    \mathcal{A}_\mu F(\mu^N) &= \langle \mathcal{A}f,\mu^N \rangle \,, \label{Eqn: Initial measure-valued generator} \\
    \mathcal{D}(\mathcal{A}_\mu) &= \{ F(\mu^N) = \langle f,\mu^N\rangle \,|\, f \in \mathcal{D}(\mathcal{A}) \} \,, \label{Eqn: Initial measure-valued generator domain}
\end{align}
\end{subequations}
noting that this is, in fact, the same generator for all $N$. There are, however, several technical points that bring us away from this standard form. 

Firstly, $f$ here should be a function $f:\mathcal{S}= \W\times\{0,1\} \rightarrow \mathbb{R}$, but $\mathcal{D}(\mathcal{A})$ comprises functions on $\W$ only. Moreover, to apply Theorem 4.10.3 of \cite{ethier2009markov} to include the desired jumps, we require the state space to be a complete, separable metric space. If functions in $\mathcal{D}(\mathcal{A}_\mu)$ were functions of $\omega$ only and had no dependence on $a$, it would not be possible to separate elements with different $a$-values. Therefore it is necessary that (at least some) functions in $\mathcal{D}(\mathcal{A}_\mu)$ have a dependence on the (constant) $a$ values. Finally, the space of empirical measures of the form \eqref{Eqn: Definition of empirical measure} with finite $N$ is not a complete metric space. 

We first address the issue of the state space, with the following definitions taken from \cite{daley2008introduction}. 
\begin{definition}
    Denote by $\mathcal{X}$ some complete separable metric space (such as our $\mathcal{S}$) and by $\mathcal{M}(\mathcal{X})$ the space of measures on $\mathcal{X}$. A measure $\mu$ is called:
    \begin{enumerate}[label=\alph*)]
        \item \textit{boundedly finite} if $\mu(B)<\infty$ for every bounded Borel set $B$. The space of such measures is denoted by $\mathcal{M}_\mathcal{X}^\#$.
        \item a \textit{counting measure} if it is a boundedly finite, integer-valued measure. The space of such measures is denoted by $\mathcal{N}_\mathcal{X}^\#$.
    \end{enumerate}
\end{definition}

By \cite[Proposition 9.1.3]{daley2008introduction}, any measure $\mu \in \mathcal{N}_\mathcal{S}^\#$ can be written in the form 
\begin{align} \label{Eqn: Form of bfivm's}
    \mu = \sum_{i} \kappa^i \,\delta\{s^i\}
\end{align}
where $s^i \in \mathcal{S}$ are a countable set of points with finitely many $s^i$ in any bounded Borel set, and $\kappa^i$ are positive integers. For $\mu \in \mathcal{N}_\mathcal{S}^\#$ denote by $[\mu]$ the set of these $s^i$ and by $|[\mu]| \in \mathbb{N}\cup\{\infty\}$ their number. The space $\mathcal{N}_\mathcal{S}^\#$ is a complete separable metric space under an appropriate topology \cite[Proposition 9.1.4, see Section 9.1 for further details]{daley2008introduction}. For these reasons we consider the evolution of the empirical measure $\mu_t^N$ \eqref{Eqn: Definition of empirical measure} as an element of $\mathcal{N}_\mathcal{S}^\#$. The topology of this space requires us to consider test functions $f$ that vanish outside some bounded set. Fortunately the restriction $\ell \in [0, \ell_{\max}]$ using a reflecting boundary condition (introduced at the end of Section \ref{Section: Generator}) and the assumption that the spatial domain $X$ is compact (Assumption \ref{Assumption: X compact}) ensures that each $s^i$ remains in a bounded subset of $S$ and so $f$ may be simply set to zero outside this subset. 

Each $s^i = (\omega^i, a^i)$. We also denote by $\ell^i$ the length of $\omega^i$. In general a superscript $i$ is used to refer to the elements or features of a particular branch $s^i$. 

\begin{remark} \label{Remark on finite initial condition}
    The initial condition $\mu_0$ has all $s^i$ in a bounded set, since $\ell^i \in [0,\ell_{\max}]$ and $X$ is bounded, hence $\mu_0$ must have a finite number of atoms. Proposition \ref{Prop: number of atoms in mu finite} below shows that this property is maintained over time. 
\end{remark}
Next we address the issue of separability. This is resolved easily by defining $f(s) = (1-a)f_0(\omega) + af_1(\omega)$ with $f_0,f_1 \in \mathcal{D}(\mathcal{A})$, which is equivalent to
\begin{align*}
    f(s) = f(\omega,a) = \begin{cases}
        f_0(\omega) & \text{ if }a=0\\
        f_1(\omega) & \text{ if }a=1
    \end{cases} \,.
\end{align*}
Since $\mathcal{D}(\mathcal{A})$ is separating on $\W$, functions of this form are separating on $S$. Recall that to make inactive paths static, the generator should be zero when $a=0$ so $f_0$ does not contribute to the generator (but is still required in the definition of $f$ for separability). Hence our generator has the form 
\begin{subequations} \label{Eqn: Measure-valued generator and domain}
\begin{align}
    \mathcal{A}_\mu F(\mu) &= \langle a\mathcal{A}f_1(\omega),\mu \rangle \,, \label{Eqn: Measure-valued generator} \\
    \mathcal{D}(\mathcal{A}_\mu) &= \{ F(\mu) = \langle f,\mu\rangle \,|\, f(s) = (1-a)f_0(\omega) + af_1(\omega)\,\text{with } f_0,f_1 \in \mathcal{D}(\mathcal{A}) \} \,. \label{Eqn: Measure-valued generator domain}
\end{align}
\end{subequations}
For $\mu^N$ of the form \eqref{Eqn: Definition of empirical measure} the martingale problem for this generator maps to a collection of independent martingale problems for the individual Brownian snakes and so is well-posed. We may now track an arbitrary, finite number of active/inactive dendrites, so next consider the creation and removal of branches.

\subsubsection{Adding branches} \label{Section: New branches}

For each $\omega \in \W$, let $\Lambda(\omega) \in \mathbb{R}^+$ be the rate at which new branches are created from $\omega$. We assume that $\Lambda$ is bounded, that is there exists some constant $C_\Lambda$ such that $\Lambda(\omega) \leq C_\Lambda$ for all $\omega \in \W$. If a branch is created from $\omega$, let $\rho_\omega(du) \in \mathcal{P}([0,\ell_\omega])$ be a probability measure determining the starting position of the new branch. We assume that this probability is absolutely continuous with respect to the Lebesgue measure, meaning that it has a well-defined density (also denoted $\rho_\omega$) with 
\begin{align*}
    \rho_\omega(U) = \int_U \rho_\omega(u) \, du
\end{align*}
for any measurable $U \subset [0,\ell_\omega]$. In particular this means that the probability of a new branch emerging from exactly the start or end point of $\omega$ is zero. 

For example, if branches occur at a constant rate $\lambda$ per unit length of dendrite, we have $\Lambda(\omega)=\lambda \ell_\omega \leq \lambda \ell_{\max}$ and $\rho_\omega(du)=\ell^{-1} du$. More generally one could specify some function $f(u,\ell)$ such that 
\begin{align*}
    \Lambda(\omega) = \int_0^\ell f(u,\ell) \, du \,,\quad \rho_\omega(du) = f(u,\ell)/\Lambda(\omega)\,,
\end{align*}
so that branches could be made more or less likely at certain points along the dendrite, but we do not impose this specific structure. Alternatively the branching rate may depend on some internal chemical concentration, see for example \cite{graham2004transport}, that should be tracked along the branch. This mix of continuous dendrite development and random branch creation also mirrors the approach in NETMORPH \cite{koene2009netmorph}, although we track here the whole dendrite rather than only its endpoint.

Since branch creation is incorporated as a jump process on the state space $\mathcal{N}_S^{\#}$, the jump rates may depend on features of the whole measure $\mu$. For example, all jumps rates could be slowed by a factor
\begin{align} \label{Eqn: length plus 1}
    1+\int \ell \, d\mu = 1 + \sum_{s^i \in [\mu]} \ell^i\,,
\end{align}
which measures the total length of all current branches (plus one to avoid any possible division by zero) to limit the overall speed of growth. Note that as $\mu$ is a sum of Dirac masses, the integral over $\mu$ in \eqref{Eqn: length plus 1} is in fact simply the sum over the branches. In the case that there is dependence of the neuron as a whole, we denote jump rates by $\Lambda(\mu,\omega)$.

If a branch occurs at $\omega(u)$ we split the branch into $\omega^-$ and $\omega^+$ as described at the start of Section \ref{Section: branching process}. The new branch, again denoted $\nu\in\W$ is given $\ell_\nu = \ell_0$ for some small constant $\ell_0>0$ to avoid immediate removal. Immediate retraction could also be solved using lifetime and growth processes that depend on the age of the branch, see Example 5 in Section \ref{Section: Extensions} for details. The position of the branch has the law $\{\xi_u , 0 \leq u \leq \ell_0\}$ started from $\nu(0)=\omega(u)\in X$. Let $P(d\nu)$ be the distribution over $\W$ of this new branch and $\mu^+(\omega,a_\omega,u,\nu)$ be the resulting measure after adding this branch and making activity updates, where $a_\omega$ is the activity of the original parent branch $\omega$. Recall that $a_\omega$ does not affect the branching rate, only the activity of $\omega^+$. 

These updates are captured by the jump process generator
\begin{align*}
    \mathcal{A}^+F(\mu) &= \sum_{i=1}^N \Lambda(\mu,\omega^i) \, \int_\W \int_0^{\ell^i} \Big( F(\mu^+(\omega^i,a^i,u,\nu)) - F(\mu) \Big) \, \rho_{\omega^i}(du) \, P(d\nu)\\
    &= \bigg\langle \int_\W \int_0^{\ell} \Lambda(\mu,\omega) \,\Big( F(\mu^+(\omega,a,u,\nu)) - F(\mu) \Big) \, \rho_{\omega}(du) \,P(d\nu) \,,\mu \bigg\rangle \,,
\end{align*}
where $F$ is any bounded function of $\mu$. 

\subsubsection{Removing branches} \label{Section: removing branches}

It is necessary to remove branches with $\ell=0$ from the measure $\mu_t$ so that we stop tracking them. As these branches are absorbed at $\ell=0$ they will not evolve further, but should be removed so that we may update the activity of other branches. When doing so we must also recombine the sibling of branch $\omega$ with the parent of branch $\omega$, according to the setup described in Section \ref{Section: New branches}. Let $\mu^-(\omega,a)$ denote the resulting measure. 

In reality a fully retracted branch has length zero so has already vanished, but it is important for the mathematical structure of the model that these branches are removed from the measure. One option is to remove branches using stopping times. When a dendrite hits the $\ell=0$ boundary the process could be stopped, $\mu$ replaced by $\mu^-$ and the process continued. This would effectively remove the branch immediately, but would be a boundary condition that is imposed in addition to the generator. Alternatively we could add a final jump process to the generator that removes branches will $\ell=0$ at some high rate (denoted $\lambda$, with $\lambda\gg 1$). The advantage of including these removals as a jump with a fast rate is that the entire dynamics is still captured within a single generator. 

These branch removals are captured by the jump process generator 
\begin{align*}
    \mathcal{A}^-F(\mu) &= \lambda \sum_{i=1}^N \mathds{1}\{\ell^i=0\} \Big( F(\mu^-(\omega^i,a^i)) - F(\mu) \Big) \\
    &= \lambda \bigg\langle \mathds{1}\{\ell_\omega=0\} \Big( F(\mu^-(\omega,a)) - F(\mu) \Big) \,, \mu \bigg\rangle  \,,
\end{align*}
where again $F$ is any bounded function of $\mu$. 

\begin{remark}
    The jump rate $\lambda$ represents the rate at which we removed the branch from being tracked by the measure and is not the rate of a physical process. In both simulation methods discussed in Section \ref{Section: numerical simulation} we do not track the measure directly so in practice this removal rate $\lambda$ is not significant. 
\end{remark}

\subsubsection{Combined generator}

Applying Theorem 4.10.3 \cite{ethier2009markov} we may combine the `continuous' part of the generator for the evolution of each dendrite with the `jump' part for the addition and removal of branches, giving that the martingale problem for the following generator is well-posed: 
\begin{align} \label{Eqn: Full generator}
    \mathcal{L}F(\mu) &= A_\mu F(\mu) + \mathcal{A}^+F(\mu) + \mathcal{A}^-F(\mu) \,,
\end{align}
with domain $\mathcal{D}(\mathcal{L}) = \mathcal{D}(\mathcal{A}_\mu)$ given in \eqref{Eqn: Measure-valued generator domain}.
In full, 
\begin{align} \label{Eqn: Full generator (long)}
    \mathcal{L}F(\mu) = \bigg\langle a\mathcal{A} f_1(\omega) &+ \int_\W \int_0^{\ell} \Lambda(\omega) \,\big( F(\mu^+) - F(\mu) \big) \, \rho_{\omega}(du)\,P(d\nu) \\
    &+ \lambda \mathds{1}\{\ell_\omega=0\} \big( F(\mu^-) - F(\mu) \big),\mu \bigg\rangle\,.
\end{align}
By specifying this generator, which defines a unique stochastic process, we have completed the construction of the model for the whole neuron. 

\subsubsection{Connection to soma} \label{Section: Soma branching}

So far we have implicitly assumed that the number of processes leaving the soma remains constant, as branching only occurs along existing dendritic paths. If desired, an additional jump process could be included to allow new processes to extend from the soma at some rate $\lambda_s$. This would simply require the position of the soma to be specified (taking the origin as the standard position) and a new jump process of the following form added to the generator:
\begin{align*}
    \bigg\langle \lambda_s \int \,\big( F(\mu^+) - F(\mu) \big) \,P(d\nu) ,\mu \bigg\rangle\,,
\end{align*}
where $\mu^+$ is the measure $\mu$ plus a new branch $\nu$ added with $\nu(0)=0$ and $\ell_\nu = \ell_0$. Note that different processes extending from the soma are independent. 

\section{Properties} \label{Section: Properties}

For simplicity, throughout this section we assume that there is only a single process extending from the soma. Intuitively (with this being proven below) this means that the dendrites form a single binary tree. The results below can be easily extended to multiple processes extending from the soma, as these are independent. 

We begin with several simple properties of the process $\mu_t$. 

\begin{proposition}
    Assume that $\mu_0 \in \mathcal{N}_\mathcal{S}^{\#}$ has the form 
    \begin{align*}
        \mu = \sum_{i} \delta\{s^i\}
    \end{align*}
    for $s^i \in \mathcal{S}$ (meaning $\kappa^i=1$ for all $i$ in \eqref{Eqn: Form of bfivm's}). Then this remains true of $\mu_t$ for all $t\geq0$. 
\end{proposition}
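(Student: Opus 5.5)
The plan is to show that all atoms of $\mu_t$ remain distinct. The approach is to decompose the evolution into the continuous part, generated by $\mathcal{A}_\mu$, and the jump parts $\mathcal{A}^+$ and $\mathcal{A}^-$. Between jumps, the number of atoms is unchanged. At each jump the number of atoms changes by a bounded amount. Since the total jump rate is bounded on any finite time interval (by $C_\Lambda$ times the number of atoms, plus $\lambda$ times that number), only finitely many jumps occur on $[0,t]$ almost surely. It therefore suffices to show two things: multiplicity one is preserved almost surely during the continuous evolution, and it is preserved almost surely at each jump. A countable union of null events then finishes the argument.

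\textbf{Continuous evolution.} Here the atoms evolve as independent Brownian snakes (active) or stay frozen (inactive), as noted after \eqref{Eqn: Measure-valued generator and domain}. Two distinct atoms $s^i\neq s^j$ can only coincide if both their length processes and their paths agree.

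If both are inactive, nothing moves, so they stay distinct.

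If one is active and has positive length, its length process is a Brownian motion. Consider the event that it ever equals the frozen length of the other atom and, at that same time, the regrown path agrees exactly with the frozen path. This event has probability zero. The reason is that the regrown segment is a fresh sample of $\xi$ over a nondegenerate interval, while $X$-valued paths of $\xi$ are, generically, nonatomic.

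If both are active, independence gives the same conclusion.

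I expect this step to be the main obstacle. It needs a genericity property of $\xi$: $\Pi_x(\xi|_{[0,r]}=\text{a given path})=0$ for $r>0$, uniformly over a countable set of times. This property is not among the stated assumptions. I would either assume it, or, more robustly, exploit the fact that atoms with $\ell=0$ at distinct times differ through their $[0,p]$-segments, which are preserved by property b) of $R_{p,q}$. Concretely, two distinct snakes sharing an initial segment must differ beyond it, and the regrowth is independent.

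A further subtlety is atoms absorbed at $\ell=0$ with the same starting point. By the branching geometry, such atoms have distinct starting points almost surely, since $\rho_\omega$ is absolutely continuous.

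\textbf{Jumps.} For $\mathcal{A}^+$, the new atoms $\omega^-$, $\omega^+$ and $\nu$ are created at a branch point $\omega(u)$. Because $\rho_\omega$ has a density, $u$ lies almost surely in $(0,\ell_\omega)$ and avoids the countably many lengths and endpoint positions of the existing atoms. This makes $\omega^-$, $\omega^+$ and $\nu$ distinct from each other and from every other atom: $\omega^-$ ends at a point that is almost surely not the endpoint of any other atom, $\omega^+$ starts there, and $\nu$ has length $\ell_0$ with a freshly sampled path.

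For $\mathcal{A}^-$, one atom is removed and two are merged into $\omega^-\oplus\omega^+$, the reconstruction of a previously existing path. This path coincides with no other atom, because any other atom starts or ends at different branch points, and this matching of start and end points identifies relationships almost surely, as noted in Section~\ref{Section: branching process}.

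Combining these steps over the finitely many jump times gives the claim.
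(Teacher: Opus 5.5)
\textbf{There is a genuine gap, and it sits in your treatment of atoms that share a starting point.}

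Your decomposition is the paper's: continuous motion keeps weights, branching adds unit atoms, removal deletes atoms. You are also right that the real content is ruling out collisions.

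You assert that zero-length atoms have distinct starting points almost surely because $\rho_\omega$ is absolutely continuous. This is false. By construction $\omega^+(0)=\nu(0)=\omega(u)$, and absolute continuity of $\rho_\omega$ at best separates this branch point from other atoms' endpoints. It does not separate the two children from each other.

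The consequence is fatal, not cosmetic. $\mathcal{A}^-$ removes a zero-length atom only at the finite rate $\lambda$. So once $\nu$ is absorbed, it persists as the length-zero path at $x=\omega(u)$ with $a=1$ for an $\mathrm{Exp}(\lambda)$ time. Suppose $\omega^+$ is active with length $L>0$ at that moment. Then it is absorbed before this clock rings with positive probability: for the unreflected length, the Laplace transform of the hitting time gives $\mathbb{E}[e^{-\sqrt{2\lambda}L}]$, and with positive probability no branching intervenes, since rates are bounded by $C_\Lambda$. The two atoms are then identical, so $\kappa=2$.

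No argument can therefore close this case for the generator $\mathcal{L}$. The claim holds only with the stopping-time (immediate) removal, where two independent absorbed lengths a.s.\ never hit $0$ simultaneously. The paper's short proof rests on the same ``no two branches share a starting point'' assertion and has the same hole.

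For positive lengths, your case analysis is aimed at the wrong pairs.
\begin{itemize}
\item An active and an inactive atom can never coincide, because an atom is the pair $(\omega,a)$.
\item Atoms with different start points never coincide under the continuous evolution, because property b) of $R_{p,q}$ preserves $\omega(0)$.
\end{itemize}
So, granted that distinct branch points are distinct points of $X$, the only dangerous pairs are two active siblings. That is exactly the case you dismiss with ``independence gives the same conclusion''.

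Independence is not enough. Your fallback via preserved $[0,p]$-segments does not remove the need for a nondegeneracy hypothesis either. A deterministic continuous flow on a compact $X$ satisfies all the stated assumptions on $\xi$. Then $\omega^+$ and $\nu$ are restrictions of the same flow line from $\omega(u)$, so they coincide as soon as their independent Brownian lengths meet, which happens with positive probability.

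Add a hypothesis such as $\Pi_x(\xi=w \text{ on } [0,\epsilon])=0$ for every fixed path $w$ and every $\epsilon>0$, and your segment idea does work. Before absorption, each sibling agrees with its creation-time path on $[0,m_t]$, where $m_t>0$ is its running minimum length. The two creation-time paths differ on every initial interval a.s., so the siblings cannot coincide.

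The same oversight appears in your $\mathcal{A}^-$ step. The merged path $\omega^-\oplus\omega^+$ does share its start point with the sibling of $\omega^-$. What separates them is this segment argument, not distinct branch points.
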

\begin{proof}
    The `continuous' part of the generator $\mathcal{A}_\mu$ does not change the weight of Dirac measures, only the position $s^i$. Moreover, the jump process for branching introduces new Dirac measures with mass $\kappa^i=1$. As discussed in Section \ref{Section: New branches} the probability of any two branches sharing the same starting point is zero, therefore the probability of overlapping Dirac measures is also zero. Finally, the jump process for branch removal removes Dirac measures rather than changing their mass. Hence none of the processes in the generator of $\mu_t$ would lead to a Dirac measure with $\kappa^i \neq 1$. 
\end{proof}
Therefore the process remains within the subspace $\mathcal{N} \subset \mathcal{N}_\mathcal{S}^{\#}$ of those boundedly finite, integer valued-measures with $\kappa^i=1$ for all $i$. In \cite{daley2008introduction} this subspace is denoted $\mathcal{N}_\mathcal{S}^{\#*}$. 

\begin{proposition} \label{Prop: number of atoms in mu finite}
    The number of branches $|[\mu_t]|$ is finite almost surely for all $t \geq 0$. 
\end{proposition}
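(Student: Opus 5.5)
We will dominate the number of branches by a pure-birth (Yule-type) counting process with linear rate and then invoke non-explosion of such processes. Write $N_t = |[\mu_t]| = \langle 1, \mu_t \rangle$. By Remark \ref{Remark on finite initial condition}, $N_0 < \infty$. Inspecting the full generator \eqref{Eqn: Full generator}, $N_t$ changes only through the jump parts. The continuous part $\mathcal{A}_\mu$ moves atoms but does not create or destroy them. Removal ($\mathcal{A}^-$) does not increase $N_t$: it deletes a branch and merges its sibling with its parent. Branching ($\mathcal{A}^+$) increases $N_t$ by exactly two, since $\omega$ is replaced by $\omega^-,\omega^+$ and the new branch $\nu$ is added. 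Each atom triggers a branching event at rate $\Lambda(\mu,\omega) \le C_\Lambda$. Hence, whenever $N_t = n$, the total upward jump rate is at most $C_\Lambda n$.

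The key step is a coupling or comparison. Let $Y_t$ be a pure-birth process with $Y_0 = N_0$, jumping $n \mapsto n+2$ at rate $C_\Lambda n$. I will construct $N_t$ and $Y_t$ on a common space by thinning. Drive all branching events from a Poisson clock of rate $C_\Lambda$ attached to each individual of $Y$. Each such event is accepted as a branching of the corresponding atom of $\mu$ with probability $\Lambda(\mu_{t-},\omega^i)/C_\Lambda$, and rejected otherwise (or if no atom is associated). Removals only lower $N$. This gives $N_t \le Y_t$ for all $t$ almost surely, at least up to the first explosion time of $N$. Since the martingale problem for $\mathcal{L}$ is well-posed (Theorem 4.10.3 of \cite{ethier2009markov} as used above), the law obtained this way agrees with that of $\mu_t$.

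An alternative, coupling-free route uses the martingale problem directly. Apply it to a localised version of $F(\mu)=\langle 1,\mu\rangle$, stopped at $\tau_n=\inf\{t: N_t\ge n\}$. This gives $\mathbb{E}[N_{t\wedge\tau_n}] \le N_0 + 2C_\Lambda\int_0^t \mathbb{E}[N_{s\wedge\tau_n}]\,ds$. Gronwall then yields $\mathbb{E}[N_{t\wedge\tau_n}]\le N_0 e^{2C_\Lambda t}$, uniformly in $n$. Markov's inequality gives $P(\tau_n\le t)\le N_0e^{2C_\Lambda t}/n\to0$, so $\tau_n\to\infty$ almost surely and $N_t<\infty$ for all $t$ almost surely.

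I will present the Gronwall argument as the main proof, since it avoids building the coupling explicitly. Note that the constant function $1$ on $\mathcal{S}$ is of the form $(1-a)f_0+af_1$ with $f_0=f_1=1\in\mathcal{D}(\mathcal{A})$ and $\mathcal{A}1=0$, so the continuous part contributes nothing. The main obstacle is justifying that the stopped identity holds. The jump generators are stated for bounded $F$, but $\langle 1,\mu\rangle$ is unbounded. I will handle this by using the bounded truncation $F_n(\mu)=N\wedge n$ and stopping at $\tau_n$. Before $\tau_n$, the jump terms applied to $F_n$ agree with those applied to $N$, giving at most $2\Lambda\le 2C_\Lambda$ per atom. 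The removal term is nonpositive and can be dropped, which gives the inequality needed for Gronwall.
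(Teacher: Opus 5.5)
Your proposal is correct. Your secondary (coupling) argument is essentially the paper's proof: the paper asserts that $|[\mu_t]|$ is dominated by a birth process with rates $\lambda_n=C_\Lambda n$ and cites non-explosion of such processes (Norris, Theorem 2.5.2).

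Your main argument takes a genuinely different route. Instead of a pathwise comparison, you localise at $\tau_n$ and apply the martingale problem to the mass functional. This gives $\mathbb{E}[N_{t\wedge\tau_n}]\le N_0e^{2C_\Lambda t}$ uniformly in $n$, and you conclude $\tau_n\to\infty$ via Markov's inequality.

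\emph{What the paper's route buys.} It is short and rests on a textbook criterion. However, the coupling is left implicit, and the bookkeeping is slightly loose. Each branching event adds two atoms ($\omega\mapsto\omega^-,\omega^+$ plus $\nu$), so the dominating process should jump $n\mapsto n+2$ at rate $C_\Lambda n$, as your $Y$ does, or run at rate $2C_\Lambda n$. This is harmless for non-explosion, but your version gets it right.

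\emph{What your Gronwall route buys.} It gives a quantitative moment bound, consistent with the later computation of $\mathcal{L}M$ in Section~\ref{Section: generators for branch number and length}, and it needs no explicit coupling. It is also the natural argument if the process is built by localisation: the total branching rate $\sum_i\Lambda(\mu,\omega^i)$ grows with the number of atoms, so it is only bounded before $\tau_n$. Showing $\tau_n\to\infty$ is exactly what removes that restriction.

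Two small points to tidy:
\begin{enumerate}
\item $N\wedge n$ is not of the form $\langle f,\mu\rangle$, so it is cleaner to apply the martingale problem to $\langle 1,\mu\rangle$ itself. This lies in $\mathcal{D}(\mathcal{L})$ with $f_0=f_1=1$, and stopped at $\tau_n$ it is bounded by $n+1$.
\item The jump terms for $N\wedge n$ do not literally agree with those for $N$ when $N=n-1$: the increment is $1$ rather than $2$. The upper bound of $2$ per branching event still holds, and that is all you use.
\end{enumerate}
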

\begin{proof}
    This is true at $t=0$ by Remark \ref{Remark on finite initial condition}. For a given branch, the rate at which new branches are created is bounded above by a constant $C_\Lambda$. Hence the number of atoms $|[\mu_t]|$ is bounded above by a birth process with birth rate $\lambda_n = C_\Lambda n$. Hence $|[\mu_t]|$ is almost surely finite by \cite[Theorem 2.5.2]{norris1998markov}.
\end{proof}
Therefore $\mu_t$ is always a sum of finitely many Dirac masses. We next consider more complex properties of $\mu_t$, specifically its graph representation and the length and number of active/inactive branches. 

\subsection{Graph structure} \label{Section: Graph properties}

From the measure $\mu_t$ we may overlay in $X$ the paths $\omega^i_t$ for each $s^i_t \in [\mu_t]$. As new branches have their start-points along existing ones, we expect these paths to form a connected graph structure. The goal of this section is to study some of the properties of this graph representation. 

\begin{definition} \label{Definition: graph representation}
    The \textit{graph representation} of a measure $\mu \in \mathcal{N}_\mathcal{S}^{\#}$ is a graph $G=(V,E)$ describing the spatial embedding of $\mu$, defined as follows. Let 
    \begin{align*}
        V &= \Big\{ x \in X \, \Big| \, \exists \, (\omega,a) \in [\mu] \text{ s.t. } x = \omega(0) \text{ or } x=\hat{\omega}  \Big\} \,, \\
        E &= \Big\{ (x,y) \in V \times V \, \Big| \, \exists \, (\omega,a) \in [\mu] \text{ s.t. } \omega(0)=x \text{ and } \hat{\omega}= y  \Big\} \,.
    \end{align*}
    To an edge $(x,y) \in E$ we assign the corresponding path $\omega_{xy} \in \mathcal{W}$ to be the element $(\omega_{xy},a) \in [\mu]$ with $\omega_{xy}(0)=x$ and $\hat{\omega}_{xy} = y$. 
    This network is fully determined by $\mu$ and so defines a mapping $\mu \mapsto (V,E)$ from $\mathcal{N}_\mathcal{S}^\#$ into a space of networks. Denote this mapping by $G(\mu)$. 
\end{definition}

The graph $G(\mu)$ is directed, following the natural direction along paths away from the soma. In addition, by Proposition \ref{Prop: number of atoms in mu finite}, $G(\mu)$ is almost surely a finite graph. As each path in the measure is included as an edge we will have $|E| = |[\mu]|$. At this point the size of the node set $|V|$ is unknown, but we have the bounds $0 \leq |V| \leq 2|[\mu]|$ as each path in $\mu$ can contribute at most two nodes to $V$. 

The first use of this graph is to introduce the notion of connectedness. 
\begin{definition}
    A directed graph $G$ is \textit{weakly connected} if, when all directed edges are replaced with undirected edges, the resulting graph is connected. Consequently, a measure $\mu \in \mathcal{N}_\mathcal{S}^\#$ is called \textit{connected} if $G(\mu)$ is weakly connected. 
\end{definition}
If $|[\mu]|=0$ then $G$ is the empty network, so both are immediately connected. 

Using this graph structure, we see that the branching-retraction problem, discussed at the start of Section \ref{Section: branching process} and illustrated in Figure \ref{fig: branching-retraction problem}, is precisely a loss of connectedness. Theorem \ref{Thm: Inherited graph properties} below shows that tracking branch activity solves this problem. 

We can also use the graph representation to be more specific about the initial condition $\mu_0$. Define the subset of $\mathcal{N}_T \subset \mathcal{N}$, as the set of measures $\mu$ with the following properties
\begin{enumerate}[label=\alph*)]
    \item $|[\mu]|$ is finite. 
    \item $\mu$ is connected. 
    \item $G(\mu)$ is a rooted tree graph whose root node (which corresponds to the soma) and leaf nodes have degree 1. All other nodes have degree 3, specifically with in-degree 1 and out-degree 2. 
    \item A path $(\omega,a) \in [\mu]$ is active ($a=1$) if and only if $\hat{\omega}$ is a leaf node of $G(\mu)$. 
\end{enumerate}
We call $\mathcal{N}_T$ the set of \textit{tree measures} and define $\mathcal{G}_T = \{G(\mu) : \mu \in \mathcal{N}_T\}$. It is precisely these graphs $G \in \mathcal{G}_T$ that describe the graph structure observed in real neurons. We now show that the stochastic process $\mu_t$ is a tree measure. 

\begin{theorem} \label{Thm: Inherited graph properties}
    If $\mu_0 \in \mathcal{N}_T$ then $\mu_t \in \mathcal{N}_T$ almost surely for all $t\geq0$. 
\end{theorem}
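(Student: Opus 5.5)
The plan is to exploit the piecewise structure of $\mu_t$. Between jumps the process evolves only through the continuous part $\mathcal{A}_\mu$ of \eqref{Eqn: Full generator}. The jumps come from $\mathcal{A}^+$ (branching) and $\mathcal{A}^-$ (removal). By Proposition \ref{Prop: number of atoms in mu finite} the number of branches stays finite, and the branching rate per branch is at most $C_\Lambda$. Hence almost surely only finitely many jumps occur on any $[0,t]$, and I can order them as $0<\tau_1<\tau_2<\dots$. It then suffices to show two things: property $\mathcal{N}_T$ is preserved on each inter-jump interval $[\tau_k,\tau_{k+1})$, and it is preserved across each jump. Induction on $k$ then gives $\mu_t\in\mathcal{N}_T$ for all $t$ almost surely.

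\emph{Continuous evolution.} The generator $\langle a\mathcal{A}f_1,\mu\rangle$ acts only on active paths, and inactive paths are frozen. For an active path, the Brownian snake keeps $\omega(0)$ fixed, since $p\ge 0$ and $\nu(t)=\omega(t)$ on $[0,p]$, and it changes only the endpoint $\hat\omega$. By property d) of $\mathcal{N}_T$, active paths are exactly those ending at leaves. So moving their endpoints changes only the positions of leaf nodes. Two facts need checking here. First, a moving endpoint must not coincide with another vertex. Second, no interior node moves. For the second, interior nodes are the endpoints of inactive paths, which are fixed, together with start points of paths, which are fixed. For the first, I would argue that hitting a specific fixed point of $X$, or the endpoint of another independent snake, is a null event. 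This really requires a non-polarity assumption on $\xi$, so I would state it as used implicitly, in the same way the paper uses it after Figure \ref{fig:branching activity}. Finally, as long as $\ell>0$, the edge set and the incidence structure of $G(\mu_t)$ are unchanged. So $G$ remains the same rooted tree with the same degrees, and properties a)–d) persist.

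\emph{Jumps.} At a branching jump at $\omega(u)$ with $u\in(0,\ell_\omega)$ (almost surely in the interior, by absolute continuity of $\rho_\omega$), the edge $(x,y)$ is replaced by $(x,z),(z,y),(z,w)$. Here $z=\omega(u)$ and $w=\hat\nu$ are new vertices, almost surely distinct from all existing vertices. The new graph is a tree, and $z$ has in-degree 1 and out-degree 2. The new leaf $w$ comes from an active branch. The path $\omega^-$ is inactive and now ends at the interior node $z$. The path $\omega^+$ keeps activity $a_\omega$ and ends at $y$, whose leaf status is unchanged. So d) holds. At a removal jump, a branch with $\ell=0$ is a degenerate edge $(z,z)$ whose start point $z$ is an interior node. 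Recombining the sibling with the parent deletes $z$ and merges the two edges into one. This is the reverse of the branching operation, so the degree and activity conditions are restored. I would also need the fact that a zero-length branch must be a child or a leaf continuation at an interior node: it is active, so it is a leaf edge. An inactive path cannot reach $\ell=0$ because it is frozen at length $u>0$.

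\emph{Main obstacle.} The delicate point is the time window in which a branch has $\ell=0$ but has not yet been removed, since the removal happens at rate $\lambda$ rather than instantly. During this window the graph contains a loop-like degenerate edge, and $z$ has a leaf coinciding with it. I would handle this in one of two ways. The first option is to interpret removal through the stopping-time version mentioned in Section \ref{Section: removing branches}. The second is to note that the graph $G(\mu)$, with the zero-length edge identified with its vertex, is still a tree in $\mathcal{G}_T$ once degenerate edges are contracted. I would try the stopping-time interpretation first, because it makes the inductive argument clean.
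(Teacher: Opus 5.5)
Your case analysis is the same as the paper's. The paper also checks that the continuous part only moves endpoints of active paths, which are leaves by d). It checks that a branching jump creates the nodes $z=\omega(u)$ and $\hat\nu$ and splits the parent edge, and that removal reverses this. It wraps all of this in a ``strong induction'' over time, which your induction over the a.s.\ finitely many event times makes precise. The problems arise where you go beyond the paper.

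The zero-length window is not something you may handle either way. Under the rate-$\lambda$ removal in $\mathcal{L}$, for every fixed $t>0$ there is positive probability that some active branch has been absorbed at $\ell=0$ and its removal clock has not yet rung. Its endpoint $z$ still has the sibling edge leaving it, so d) fails, and $G(\mu_t)$ contains the loop $(z,z)$. The theorem is therefore true only for instantaneous (stopping-time) removal. The paper's proof assumes this silently when it says the branch ``reaches $\ell=0$ and is removed''. You should state explicitly that this is the version you prove. Your second option does not work. Contracting the loop leaves $z$ with in-degree $1$ and out-degree $1$, which contradicts c). Moreover, d) is a condition on the atoms of $\mu$, so it fails however the graph is redrawn.

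Two further points. First, your claim that a zero-length branch starts at an interior node is false when the tree is a single edge. The lone root branch is active and can be absorbed, and after removal $\mu=0$. This violates c) as written, since there is no root of degree $1$. You need a convention, such as counting $0$ as an absorbing state in $\mathcal{N}_T$, or proving the result up to extinction. The paper's proof also skips this case and only acknowledges it in the following corollary.

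Second, for the null-event step you need points to be \emph{polar} for the tip process, not non-polar. That assumption genuinely fails in natural cases. For Brownian $\xi$ in dimension $d\le 3$ the snake tip hits fixed points with positive probability, because points are polar for super-Brownian motion only when $d\ge 4$. So the statement can only hold as ``for each fixed $t$, almost surely''. To get it, run your jump-time induction on the combinatorial parent--child tree produced by the jumps, which is preserved pathwise. Then use only that the relevant tips and branch points have diffuse laws at a fixed time and at the jump times. This gives that $G(\mu_t)$ a.s.\ coincides with that tree.
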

\begin{proof}
    Before proceeding with the remainder of the proof, note that Proposition \ref{Prop: number of atoms in mu finite} above guarantees that property a) holds almost surely. 

    The remainder of the proof proceeds by strong induction: we assume that $\mu_s \in \mathcal{N}_T$ for all $s\leq t$ and show that the evolution of $\mu_t$ (described by the generator) keeps $\mu_t$ in the set $\mathcal{N}_T$. We investigate the action of the continuous part of the generator and the jump part separately. 

    First, consider the continuous evolution of each individual path. As this evolution does not change the number or activity of paths, property d) is unaffected. If the path is inactive, then there is no change and so all other properties are unaffected. For an active $\omega$ path, we have by property d) that $\hat{\omega}$ is a leaf node of $G(\mu)$, meaning that there are no branches coming off this path. Therefore its evolution does not affect the connectedness of $\mu$ and so property b) is unaffected. The only change that could violate property c) is if the endpoint $\hat{\omega}$ coincided with the endpoint of another path (as this would create a cycle in $G(\mu)$). However, due to the randomness in both the length and growth processes, the probability of such an occurrence is zero. 


    \begin{figure}[!ht]
    \centering
    \resizebox{.6\textwidth}{!}{%
    \begin{circuitikz}
    \tikzstyle{every node}=[font=\fontsize{27.9pt}{36.2pt}\selectfont]
    \draw [ color={rgb,255:red,113; green,113; blue,113}, draw opacity=1, line width=2pt, short] (-5,4.375) .. controls (-4.75,4.625) and (-4.625,4.625) .. (-4.625,4.75);
    \draw [ color={rgb,255:red,113; green,113; blue,113}, draw opacity=1, line width=2pt, short] (-4.625,4.75) .. controls (-4.625,5) and (-4.75,5) .. (-4.625,5.25);
    \draw [ color={rgb,255:red,113; green,113; blue,113}, draw opacity=1, line width=2pt, short] (-4.625,5.25) .. controls (-4.5,5.5) and (-4.5,5.625) .. (-4.375,5.75);
    \draw [ color={rgb,255:red,113; green,113; blue,113}, draw opacity=1, line width=2pt, short] (-4.375,5.75) .. controls (-4.125,6) and (-4.25,6) .. (-4,6.125);
    \draw [ color={rgb,255:red,113; green,113; blue,113}, draw opacity=1, line width=2pt, short] (-4,6.125) .. controls (-3.75,6.25) and (-3.75,6.125) .. (-3.625,6.25);
    \draw [ color={rgb,255:red,113; green,113; blue,113}, draw opacity=1, line width=2pt, short] (-3,9) .. controls (-3,9.25) and (-3.125,9.375) .. (-3.125,9.5);
    \draw [ color={rgb,255:red,113; green,113; blue,113}, draw opacity=1, line width=2pt, short] (-3.125,9.5) .. controls (-3.125,9.75) and (-3.25,9.75) .. (-3.125,10);
    \draw [ color={rgb,255:red,113; green,113; blue,113}, draw opacity=1, line width=2pt, short] (-3.125,10) .. controls (-3,10.25) and (-3.125,10.25) .. (-3,10.5);
    \draw [ color={rgb,255:red,113; green,113; blue,113}, draw opacity=1, line width=2pt, short] (-3,10.5) .. controls (-2.875,10.75) and (-2.75,10.75) .. (-2.75,11);
    \draw [ color={rgb,255:red,113; green,113; blue,113}, draw opacity=1, line width=2pt, short] (-2.75,11) .. controls (-2.625,11.25) and (-2.75,11.375) .. (-2.625,11.5);
    \draw [ color={rgb,255:red,113; green,113; blue,113}, draw opacity=1, line width=2pt, short] (-2.625,11.5) .. controls (-2.375,11.75) and (-2.5,11.75) .. (-2.25,11.875);
    \draw [ color={rgb,255:red,113; green,113; blue,113}, draw opacity=1, line width=2pt, short] (-2.25,11.875) .. controls (-2,12) and (-2,11.875) .. (-1.75,12);
    \draw [ color={rgb,255:red,113; green,113; blue,113}, draw opacity=1, line width=2pt, short] (-1.75,12) .. controls (-1.5,12.125) and (-1.5,12.125) .. (-1.25,12.25);
    \draw [ color={rgb,255:red,113; green,113; blue,113}, draw opacity=1, line width=2pt, short] (-1.25,12.25) .. controls (-1,12.5) and (-1.125,12.5) .. (-0.875,12.625);
    \draw [ color={rgb,255:red,113; green,113; blue,113}, draw opacity=1, line width=2pt, short] (-0.875,12.625) .. controls (-0.625,12.75) and (-0.625,12.625) .. (-0.375,12.625);
    \draw [ color={rgb,255:red,113; green,113; blue,113}, draw opacity=1, line width=2pt, short] (-0.375,12.625) .. controls (-0.125,12.625) and (-0.125,12.625) .. (0.125,12.5);
    \draw [ color={rgb,255:red,113; green,113; blue,113}, draw opacity=1, line width=2pt, short] (-3,9) .. controls (-3,8.875) and (-2.875,8.75) .. (-2.75,8.5);
    \draw [ color={rgb,255:red,113; green,113; blue,113}, draw opacity=1, line width=2pt, short] (-2.75,8.5) .. controls (-2.625,8.25) and (-2.75,8.25) .. (-2.75,8);
    \draw [ color={rgb,255:red,113; green,113; blue,113}, draw opacity=1, line width=2pt, short] (-2.75,8) .. controls (-2.75,7.75) and (-2.75,7.75) .. (-2.875,7.5);
    \draw [ color={rgb,255:red,113; green,113; blue,113}, draw opacity=1, line width=2pt, short] (-2.875,7.5) .. controls (-3,7.25) and (-2.875,7.25) .. (-2.875,7);
    \draw [ color={rgb,255:red,113; green,113; blue,113}, draw opacity=1, line width=2pt, short] (-2.875,7) .. controls (-2.875,6.875) and (-3,6.75) .. (-3.125,6.5);
    \draw [ color={rgb,255:red,113; green,113; blue,113}, draw opacity=1, line width=2pt, short] (-3.125,6.5) .. controls (-3.375,6.375) and (-3.375,6.375) .. (-3.625,6.25);
    \draw [line width=2pt, dash pattern=on 3\pgflinewidth off 1.5\pgflinewidth] (3.75,4.375) .. controls (4,4.625) and (4.125,4.625) .. (4.125,4.75);
    \draw [line width=2pt, dash pattern=on 3\pgflinewidth off 1.5\pgflinewidth] (4.125,4.75) .. controls (4.125,5) and (4,5) .. (4.125,5.25);
    \draw [line width=2pt, dash pattern=on 3\pgflinewidth off 1.5\pgflinewidth] (4.125,5.25) .. controls (4.25,5.5) and (4.25,5.625) .. (4.375,5.75);
    \draw [line width=2pt, dash pattern=on 3\pgflinewidth off 1.5\pgflinewidth] (4.375,5.75) .. controls (4.625,6) and (4.5,6) .. (4.75,6.125);
    \draw [line width=2pt, dash pattern=on 3\pgflinewidth off 1.5\pgflinewidth] (4.75,6.125) .. controls (5,6.25) and (5,6.125) .. (5.125,6.25);
    \draw [line width=2pt, dash pattern=on 3\pgflinewidth off 1.5\pgflinewidth] (5.75,9) .. controls (5.75,9.25) and (5.625,9.375) .. (5.625,9.5);
    \draw [ color={rgb,255:red,113; green,113; blue,113}, draw opacity=1, line width=2pt, short] (5.625,9.5) .. controls (5.625,9.75) and (5.5,9.75) .. (5.625,10);
    \draw [ color={rgb,255:red,113; green,113; blue,113}, draw opacity=1, line width=2pt, short] (5.625,10) .. controls (5.75,10.25) and (5.625,10.25) .. (5.75,10.5);
    \draw [ color={rgb,255:red,113; green,113; blue,113}, draw opacity=1, line width=2pt, short] (5.75,10.5) .. controls (5.875,10.75) and (6,10.75) .. (6,11);
    \draw [ color={rgb,255:red,113; green,113; blue,113}, draw opacity=1, line width=2pt, short] (6,11) .. controls (6.125,11.25) and (6,11.375) .. (6.125,11.5);
    \draw [ color={rgb,255:red,113; green,113; blue,113}, draw opacity=1, line width=2pt, short] (6.125,11.5) .. controls (6.375,11.75) and (6.25,11.75) .. (6.5,11.875);
    \draw [ color={rgb,255:red,113; green,113; blue,113}, draw opacity=1, line width=2pt, short] (6.5,11.875) .. controls (6.75,12) and (6.75,11.875) .. (7,12);
    \draw [ color={rgb,255:red,113; green,113; blue,113}, draw opacity=1, line width=2pt, short] (7,12) .. controls (7.25,12.125) and (7.25,12.125) .. (7.5,12.25);
    \draw [ color={rgb,255:red,113; green,113; blue,113}, draw opacity=1, line width=2pt, short] (7.5,12.25) .. controls (7.75,12.5) and (7.625,12.5) .. (7.875,12.625);
    \draw [ color={rgb,255:red,113; green,113; blue,113}, draw opacity=1, line width=2pt, short] (7.875,12.625) .. controls (8.125,12.75) and (8.125,12.625) .. (8.375,12.625);
    \draw [ color={rgb,255:red,113; green,113; blue,113}, draw opacity=1, line width=2pt, short] (8.375,12.625) .. controls (8.625,12.625) and (8.625,12.625) .. (8.875,12.5);
    \draw [line width=2pt, dash pattern=on 3\pgflinewidth off 1.5\pgflinewidth] (5.75,9) .. controls (5.75,8.875) and (5.875,8.75) .. (6,8.5);
    \draw [line width=2pt, dash pattern=on 3\pgflinewidth off 1.5\pgflinewidth] (6,8.5) .. controls (6.125,8.25) and (6,8.25) .. (6,8);
    \draw [line width=2pt, dash pattern=on 3\pgflinewidth off 1.5\pgflinewidth] (6,8) .. controls (6,7.75) and (6,7.75) .. (5.875,7.5);
    \draw [line width=2pt, dash pattern=on 3\pgflinewidth off 1.5\pgflinewidth] (5.875,7.5) .. controls (5.75,7.25) and (5.875,7.25) .. (5.875,7);
    \draw [line width=2pt, dash pattern=on 3\pgflinewidth off 1.5\pgflinewidth] (5.875,7) .. controls (5.875,6.875) and (5.75,6.75) .. (5.625,6.5);
    \draw [line width=2pt, dash pattern=on 3\pgflinewidth off 1.5\pgflinewidth] (5.625,6.5) .. controls (5.375,6.375) and (5.375,6.375) .. (5.125,6.25);
    \draw [line width=2pt, short] (5.625,9.5) .. controls (5.375,9.625) and (5.25,9.5) .. (5,9.625);
    \draw [line width=2pt, short] (5,9.625) .. controls (4.875,9.875) and (4.75,10) .. (4.625,10.125);
    \draw [line width=2pt, short] (4.625,10.125) .. controls (4.625,10.375) and (4.625,10.5) .. (4.5,10.625);
    \draw [line width=2pt, {Stealth[scale=1.5]}-{Stealth[scale=1.5]}, ] (0,8.125) -- (3,8.125);
    \draw [ fill={rgb,255:red,0; green,0; blue,0}, fill opacity=1, line width=2pt ] (4.5,10.625) circle (0.125cm);
    \draw [ fill={rgb,255:red,0; green,0; blue,0}, fill opacity=1, line width=2pt ] (8.875,12.5) circle (0.125cm);
    \node [font=\fontsize{22.2pt}{28.8pt}\selectfont, inner xsep=0.080cm, inner ysep=0.085cm, rounded corners=0.020cm] at (4,10.625) {4};
    \node [font=\fontsize{22.2pt}{28.8pt}\selectfont, inner xsep=0.080cm, inner ysep=0.085cm, rounded corners=0.020cm] at (6.125,9.5) {3};
    \draw [ fill={rgb,255:red,0; green,0; blue,0}, fill opacity=1, line width=2pt ] (-5,4.375) circle (0.125cm);
    \draw [ fill={rgb,255:red,0; green,0; blue,0}, fill opacity=1, line width=2pt ] (5.625,9.5) circle (0.125cm);
    \node [font=\fontsize{22.2pt}{28.8pt}\selectfont, inner xsep=0.080cm, inner ysep=0.085cm, rounded corners=0.020cm] at (9.375,12.5) {2};
    \node [font=\fontsize{22.2pt}{28.8pt}\selectfont, inner xsep=0.080cm, inner ysep=0.085cm, rounded corners=0.020cm] at (-5.5,4.5) {1};
    \draw [ fill={rgb,255:red,0; green,0; blue,0}, fill opacity=1, line width=2pt ] (3.75,4.375) circle (0.125cm);
    \node [font=\fontsize{22.2pt}{28.8pt}\selectfont, inner xsep=0.080cm, inner ysep=0.085cm, rounded corners=0.020cm] at (3.25,4.5) {1};
    \draw [ fill={rgb,255:red,0; green,0; blue,0}, fill opacity=1, line width=2pt ] (0.125,12.5) circle (0.125cm);
    \node [font=\fontsize{22.2pt}{28.8pt}\selectfont, inner xsep=0.080cm, inner ysep=0.085cm, rounded corners=0.020cm] at (0.625,12.5) {2};
    \node [font=\fontsize{22.2pt}{28.8pt}\selectfont, fill={rgb,255:red,255; green,255; blue,255}, fill opacity=1, text opacity=1, inner xsep=0.080cm, inner ysep=0.085cm, rounded corners=0.020cm] at (-3.875,9.625) {$\omega_{12}$};
    \node [font=\fontsize{22.2pt}{28.8pt}\selectfont, fill={rgb,255:red,255; green,255; blue,255}, fill opacity=1, text opacity=1, inner xsep=0.080cm, inner ysep=0.085cm, rounded corners=0.020cm] at (6,12.375) {$\omega_{32}$};
    \node [font=\fontsize{22.2pt}{28.8pt}\selectfont, fill={rgb,255:red,255; green,255; blue,255}, fill opacity=1, text opacity=1, inner xsep=0.080cm, inner ysep=0.085cm, rounded corners=0.020cm] at (6.625,7) {$\omega_{13}$};
    \node [font=\fontsize{22.2pt}{28.8pt}\selectfont, fill={rgb,255:red,255; green,255; blue,255}, fill opacity=0, text opacity=1, inner xsep=0.080cm, inner ysep=0.085cm, rounded corners=0.020cm] at (4.375,9.5) {$\omega_{34}$};
    \end{circuitikz}
    }%
    \caption{Illustration of the branching/activity mechanism described in Figure \ref{fig:branching activity} with additional annotation to show the nodes and edge of $G(\mu)$. Solid black lines show active paths. Dashed black lines show inactive paths. Solid grey lines show paths that may be either active or inactive. On the left, a single dendrite before branching that may be either active or inactive. On the right, the dendrite after branching. The path below the branch point $\omega_{13}$ is inactive. Above it there are two paths: the remainder of the original path $\omega_{32}$ (which retains its previous activity) and the new active branch $\omega_{34}$.}
    \label{fig:branching annotated}
    \end{figure}

    Next, consider the creation of a new branch. For reference, a version of Figure \ref{fig:branching activity} with additional annotation showing the nodes of $G(\mu)$ is presented in Figure \ref{fig:branching annotated}. Solid black lines show active paths, dashed black lines show inactive paths, and solid grey lines show paths that may be either active or inactive. 
    
    Adding a branch introduces new nodes at the beginning (node 3) and the end (node 4) of the new path ($\omega_{34}$). By construction, the path $\omega_{13}$ connects node 3 to the rest of the graph and the paths $\omega_{32}$ and $\omega_{34}$ connect nodes 2 and 4 respectively, ensuring that property b) is preserved. Here is it crucial that when $\omega_{12}$ is split into $\omega_{13}$ and $\omega_{32}$ their start- and end-points match. Nodes 2 and 4 are leaf nodes with degree 1 and node 3 is a branch node with degree 3. The graph $G(\mu)$ has remained a rooted tree as no cycles have been introduced and the edge direction still follows the natural direction away from the soma/root. Finally, the activity rules have been preserved: $\omega_{13}$ is inactive and node 3 is not a leaf node, $\omega_{34}$ is active and node 4 is a leaf node, $\omega_{32}$ retains the activity of $\omega_{12}$ and these share the end node 2 so $\omega_{32}$ retains the correct activity. 

    Lastly, consider the alternative direction when a branch fully retracts. Assume that the path $\omega_{34}$ reaches $\ell=0$ and is removed, in which case the previous construction is simply reversed and all relevant properties continue to hold. The path $\omega_{13}$ is inactive so it is not possible for it to be removed. Finally, assume that the path $\omega_{32}$ reaches $\ell=0$ and is removed. For this to occur, it must be active, so by property d) must be a leaf node and so the situation is effectively equivalent to the removal of $\omega_{34}$. 

    Hence no part of the evolution of the measure $\mu_t$ causes a loss of any of the properties a)--d) and so $\mu_t \in \mathcal{N}_T$ for all $t\geq0$. 
\end{proof}

\begin{corollary}
    The size of the node set of $G(\mu)$ is given by $|V| = |[\mu]|+1$. 
\end{corollary}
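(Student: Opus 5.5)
The plan is to combine Theorem \ref{Thm: Inherited graph properties} with the elementary fact that a finite tree has exactly one more vertex than edges. By Theorem \ref{Thm: Inherited graph properties}, whenever $\mu_0 \in \mathcal{N}_T$ we have $\mu_t \in \mathcal{N}_T$ almost surely, so it suffices to prove $|V| = |[\mu]|+1$ for every tree measure $\mu \in \mathcal{N}_T$. I will read the corollary as holding for all $\mu\in\mathcal{N}_T$, and hence almost surely for $\mu_t$, in the non-empty case. The degenerate empty measure needs a separate convention, since there $|V|=0$.

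First, I identify the edges with the atoms of $\mu$: $|E| = |[\mu]|$. As noted after Definition \ref{Definition: graph representation}, each $(\omega,a)\in[\mu]$ gives exactly one edge $(\omega(0),\hat\omega)$. The map from atoms to edges is injective, because two distinct atoms sharing both start and end point would form a cycle, or a multi-edge, contradicting property c). Second, property c) says $G(\mu)$ is a tree, and property a) makes it finite. Third, I apply the standard fact that a finite connected acyclic graph on $|V|$ vertices has $|V|-1$ edges. If one wants a self-contained argument, induct on $|E|$ by removing a leaf and its incident edge, which keeps the graph a tree. Combining the steps gives $|V| = |E|+1 = |[\mu]|+1$.

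An alternative route, closer in spirit to the proof of Theorem \ref{Thm: Inherited graph properties}, is to induct along the dynamics. A branching event adds one atom, since $\omega$ becomes $\omega^-,\omega^+$ plus the new $\nu$, a net gain of two atoms. It also adds the two new nodes, 3 and 4, so I must recount: the atom count goes from $n$ to $n+2$ and the node count rises by $2$. A removal reverses this exactly. Continuous motion changes neither count, because the endpoint moves but remains a single node. The base case is a single path from the soma, with two nodes and one atom. This route confirms the invariant as well.

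The main obstacle is the injectivity and distinctness in the first step. Distinct atoms might share endpoints, and distinct endpoints might coincide in $X$. This is handled by the almost-sure non-coincidence argument already used in the proof of Theorem \ref{Thm: Inherited graph properties}, together with the tree property c).
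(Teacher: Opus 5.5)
Your main argument matches the paper's: by Theorem~\ref{Thm: Inherited graph properties}, $G(\mu)$ is a finite tree with $|E| = |[\mu]|$, so the standard count $|V| = |E|+1$ (for which the paper simply cites a graph-theory text) gives $|V| = |[\mu]|+1$; your caveats about the empty measure and about injectivity of atoms $\mapsto$ edges are extra care the paper omits. The dynamic-induction route is unnecessary, and its claim that continuous motion never changes the node count is false on the interval after a branch reaches $\ell=0$ and before the rate-$\lambda$ removal jump: there the branch's endpoint coincides with its start node, so $|V|$ drops to $|[\mu]|$, a degeneracy of the same kind as your empty-measure caveat.
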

\begin{proof}
    As $G(\mu)$ is a tree graph it follows immediately \cite[Theorem 4.1]{harary1969graph} that  $|V| = |E| + 1=|[\mu]|+1$. 
\end{proof}

\begin{corollary}
    When a branch retracts there are two possible situations: either the process ends entirely, or the retracting branch will have an inactive parent branch and exactly one `sibling' branch (i.e. another branch with the same parent). 
\end{corollary}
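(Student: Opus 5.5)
The plan is to read this off from Theorem \ref{Thm: Inherited graph properties}, which gives $\mu_t \in \mathcal{N}_T$ almost surely for all $t$. Structural facts about tree measures then translate directly into statements about retraction.

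First, only active paths can retract. Inactive paths have generator $a\mathcal{A}f_1 = 0$, so their length is frozen. By property d), a retracting path $(\omega,a)$ therefore has $\hat{\omega}$ a leaf node of $G(\mu_t)$. Let $x=\omega(0)$ be its start node. Since $G(\mu_t)$ is a rooted tree (property c)), either $x$ is the root or $x$ is an internal node.

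If $x$ is the root, then $x$ has degree 1, so $\omega$ is the only edge at the root. Under the standing assumption of a single process leaving the soma, a tree with a root of degree 1 whose unique root edge ends in a leaf consists of that single edge. So $|[\mu_t]|=1$. If this branch reaches $\ell=0$ and is removed, the measure becomes empty and the process ends entirely.

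If $x$ is not the root, then by property c) it has in-degree 1 and out-degree 2. The unique incoming edge $\omega_p$ (with $\hat{\omega}_p = x$) is the parent. The other outgoing edge $\omega_s \neq \omega$ (with $\omega_s(0)=x$) is the unique sibling, which is exactly one by out-degree 2. Since $\hat{\omega}_p = x$ has degree 3, it is not a leaf, so by property d) the parent has $a=0$ and is inactive.

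The only delicate point is making sure these configurations are distinguished by actual edges of $G(\mu)$. I would handle this by noting, as in the graph discussion, that start and end points of unrelated paths coincide with probability zero. Then the in- and out-edges at $x$ correspond exactly to the parent, the sibling and $\omega$ itself.
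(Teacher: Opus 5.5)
Your proposal is correct and follows essentially the same argument as the paper. A retracting branch is active, so by property d) it ends in a leaf; its start node is then either the root, where removal ends the process, or a branch node with in-degree 1 and out-degree 2, which gives an inactive parent and exactly one sibling. Your extra justifications, namely connectivity forcing a single-edge tree in the root case and the almost-sure non-coincidence of endpoints, make explicit steps the paper's proof leaves implicit.
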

\begin{proof}
    For a branch to retract it must be active, meaning that it ends in a leaf node. It cannot begin in a leaf node so must begin in either a root node or a branch node.
    
    If the retracting branch begins at the root node and is removed then the process ends (as we assumed only a single process extending from the root node). 

    The last option is that the retracting branch begins at a branch node, which has in-degree 1 (the parent branch) and out-degree 2, one of which is the retracting branch and the other must therefore be a single `sibling' branch. The parent branch does not end in a leaf node so is inactive. 
\end{proof}

We may also use Theorem \ref{Thm: Inherited graph properties} to show that the graph representation gives an alternative, equivalent process to the measure-valued $\mu_t$. 

\begin{proposition} \label{Prop: G(mu) mapping bijective}
    The mapping from $\mathcal{N}_T$ to $\mathcal{G}_T$ given by $\mu \mapsto G(\mu)$ is bijective. 
\end{proposition}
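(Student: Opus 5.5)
Surjectivity is immediate from the definition $\mathcal{G}_T = \{G(\mu) : \mu \in \mathcal{N}_T\}$, so the work is injectivity. The plan is to give an explicit inverse: from a graph $G \in \mathcal{G}_T$, together with the paths $\omega_{xy}$ attached to its edges (which are part of the data of $G(\mu)$ in Definition \ref{Definition: graph representation}), reconstruct $\mu$. Then check that $G(\mu)=G(\mu')$ forces $\mu=\mu'$.

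First, the edges of $G(\mu)$ recover the atoms. Since $|E| = |[\mu]|$, each atom $(\omega,a)\in[\mu]$ gives a distinct edge $(\omega(0),\hat\omega)$, and the path assigned to that edge is exactly $\omega$. Two atoms cannot share both start and end point: the probability of coincident start/end points is zero, and this is maintained along the process by the argument in Theorem \ref{Thm: Inherited graph properties}. Moreover, for $\mu \in \mathcal{N}_T$ the graph is a tree, so there is at most one edge between any pair of nodes. Hence the multiset of paths $\{\omega_{xy} : (x,y)\in E\}$ is exactly the set of $\omega$-components of $[\mu]$, each with multiplicity one, since $\mu\in\mathcal{N}$ has $\kappa^i=1$.

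Second, the activities are recovered from the graph. By property d) of tree measures, $a=1$ if and only if the endpoint $y$ of the edge $(x,y)$ is a leaf node, that is, has out-degree $0$. This is a purely graph-theoretic property of $G$.

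Putting these together defines the map $G\mapsto \sum_{(x,y)\in E}\delta\{(\omega_{xy},\mathds{1}\{y \text{ is a leaf}\})\}$. This is a left inverse of $\mu\mapsto G(\mu)$ on $\mathcal{N}_T$, which gives injectivity and hence bijectivity.

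The main obstacle is making precise that $G(\mu)$ carries the path labels $\omega_{xy}$, and not merely the combinatorial graph with node positions. Without the labels the map could not be injective, since different paths can share endpoints. I would state explicitly that $\mathcal{G}_T$ is understood as edge-labelled graphs, as the definition assigns $\omega_{xy}$ to each edge. I would also justify well-definedness of that assignment using the tree property (no multi-edges) together with the probability-zero coincidence argument.
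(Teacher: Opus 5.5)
Your proposal is correct and follows essentially the paper's argument. Surjectivity holds by the definition of $\mathcal{G}_T$; for injectivity, the edge labels $\omega_{xy}$ recover the $\omega$-components of the atoms and property d) recovers the activities. Your explicit left inverse simply replaces the paper's proof by contradiction, and the paper's phrase ``an edge $\omega^i$ in the edge set'' relies on the same edge-labelled reading of $G(\mu)$ that you make explicit.

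One small caveat. Neither the tree property nor the probability-zero remark rules out, for an arbitrary element of $\mathcal{N}_T$, two atoms sharing both endpoints, because $E$ is a set and such atoms would collapse to a single edge. That step therefore rests on the standing claim $|E|=|[\mu]|$, exactly as it does in the paper's proof.
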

\begin{proof}
    By definition the mapping is surjective. To show that it is injective assume there exists $\mu,\nu \in \mathcal{N}_T$ with $G(\mu)=G(\nu)$ but $\mu\neq \nu$. As $\mathcal{N}_T \subset \mathcal{N} \subset \mathcal{N}_\mathcal{S}^{\#}$ we can write 
    \begin{align*}
        \mu = \sum_i \delta\{s^i\} \,,\quad \nu = \sum_j \delta\{t^j\} \,,
    \end{align*}
    where $s^i, t^j \in \mathcal{S}$. As $\mu \neq \nu$ there must exist some $s^i$ that is not in the set of $t^j$'s, or vice versa, or both. Without loss of generality, we assume the first of these. Note that by part d) of Theorem \ref{Thm: Inherited graph properties} it is not possible that $\mu$ and $\nu$ have the same $\omega$'s but different $a$'s, therefore this difference must be a difference in the $\omega$'s. Thus there is an edge $\omega^i$ that appears in the edge set of $G(\mu)$ but does not appear in the edge set of $G(\nu)$. This contradicts the assumption that $G(\mu)=G(\nu)$ and therefore we must have that $\mu = \nu$. Thus the mapping is injective, hence it is bijective. 
\end{proof}

As a consequence of Theorem \ref{Thm: Inherited graph properties} and Proposition \ref{Prop: G(mu) mapping bijective} we may use $G(\mu)$ to uniquely map the stochastic process $\mu_t$ to a stochastic process $G(\mu_t)$ with state space $\mathcal{G}_T$. The generator of $G(\mu_t)$ is simply the generator of $\mu_t$ pushed through this mapping. Therefore it is equivalent to think about the stochastic process as being a measure $\mu_t$ or a graph $G(\mu_t)$. 

For the final part of this section we apply Theorem \ref{Thm: Inherited graph properties} to study the number of active and inactive branches. 
\begin{proposition} \label{Prop: Number of (in)active branches}
    Define the processes
    \begin{align*}
        A_t = \int a \, d\mu_t \,, \quad I_t = \int 1-a \, d\mu_t \,,
    \end{align*}
    which count the number of active and inactive branches respectively. 
    Then $A_t$ is equal to the number of dendrite tips and $I_t$ is equal to the number of branch points. 
\end{proposition}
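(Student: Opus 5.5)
We would prove the statement by reading both counts off the graph representation, using Theorem \ref{Thm: Inherited graph properties} to reduce everything to combinatorics of tree graphs in $\mathcal{G}_T$. Fix $t\ge0$. By Theorem \ref{Thm: Inherited graph properties}, almost surely $\mu_t\in\mathcal{N}_T$, and all $\kappa^i=1$, so $\mu_t=\sum_i\delta\{s^i\}$. Then $A_t=\sum_i a^i$ is the number of paths $(\omega^i,a^i)\in[\mu_t]$ with $a^i=1$, and $I_t$ is the number with $a^i=0$. Both are finite by Proposition \ref{Prop: number of atoms in mu finite}. We also have $A_t+I_t=|[\mu_t]|=|E|$.

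For the active branches we use property d): $a^i=1$ if and only if $\hat\omega^i$ is a leaf of $G(\mu_t)$. We then show that the map $(\omega^i,a^i)\mapsto\hat\omega^i$ is a bijection from active paths onto the leaf nodes. It is injective because a leaf has in-degree 1, so exactly one edge ends there. It is surjective because every leaf other than the root is the endpoint of some edge. Here we must note that the root has out-degree 1 and in-degree 0, so it is never an endpoint $\hat\omega$. The leaves are exactly the dendrite tips, which gives $A_t=$ the number of tips.

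For the inactive branches, note that every non-root node $v$ has in-degree exactly 1, since degree-1 non-root nodes are leaves reached by their unique incoming edge and internal nodes have in-degree 1. So $\omega\mapsto\hat\omega$ is a bijection from $E$ onto $V\setminus\{\text{root}\}$. Under this bijection, the inactive paths are those whose endpoint is not a leaf (property d)), that is, whose endpoint is a degree-3 branch node. Hence $I_t$ equals the number of branch points.

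As a consistency check, we would use the corollary $|V|=|[\mu_t]|+1$: $|V|=1+\#\text{leaves}+\#\text{branch nodes}=1+A_t+I_t$. The main obstacle is the degenerate case $|[\mu_t]|=0$, or the case where the single root edge has been removed. There both counts are zero, and we would treat this separately. We also need to state clearly that the root is neither a tip nor a branch point, despite having degree 1 as the leaves do.
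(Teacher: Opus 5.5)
Your proposal is correct and follows essentially the same route as the paper. Both arguments read $A_t$ and $I_t$ off $G(\mu_t)$, using property d) of Theorem~\ref{Thm: Inherited graph properties} for active paths, the degree-1/degree-3 structure of property c) for inactive ones, and the fact that no path ends at the root. The only minor difference is how you get the one-to-one correspondence between paths and their end nodes: you take it from the in-degree-one structure of the tree together with $|E|=|[\mu_t]|$, whereas the paper appeals to the probability-zero event that two paths end at the same point.
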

\begin{proof}
    The number of dendrite tips is given by the number of leaf nodes of $G(\mu_t)$ and the number of branch points is given by the number of degree-3 nodes of $G(\mu_t)$. By Proposition \ref{Thm: Inherited graph properties} paths are active if and only if they end in a leaf node, hence the number of active paths $(A_t)$ is immediately equal to the number of leaf nodes and hence the number of dendrite tips. Also, by Proposition \ref{Thm: Inherited graph properties} all nodes have degree 1 or 3. Hence inactive paths, which do not end at a leaf node, must end at a node of degree 3 (as paths cannot end at the soma). The probability of two paths ending at the same position is zero hence the number of inactive paths is equal to the number of nodes of degree 3 and hence the number of branch points. 
\end{proof}

\begin{proposition} \label{Prop: number of active and inactive branches}
    The number of active branches is equal to the number of inactive branches plus 1. 
\end{proposition}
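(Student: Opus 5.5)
The plan is to combine Theorem \ref{Thm: Inherited graph properties} with Proposition \ref{Prop: Number of (in)active branches} and reduce the statement to a purely combinatorial fact about the trees in $\mathcal{G}_T$. Fix $t\geq 0$ and assume $\mu_0\in\mathcal{N}_T$. Then, almost surely, $\mu_t\in\mathcal{N}_T$. By Proposition \ref{Prop: Number of (in)active branches}, $A_t$ equals the number $L$ of leaf nodes of $G(\mu_t)$ (excluding the root), and $I_t$ equals the number $B$ of degree-3 branch nodes. It therefore suffices to show $L=B+1$ for every nonempty $G\in\mathcal{G}_T$. The degenerate case $|[\mu_t]|=0$, where the process has ended and $A_t=I_t=0$, has to be excluded or treated separately. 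I would state this caveat explicitly, since the claim is only meaningful while the neuron is nonempty.

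For the combinatorial step I would use a degree count. Property c) of $\mathcal{N}_T$ says every node is exactly one of three kinds: the root (degree 1), a leaf (degree 1), or a branch node (degree 3). Hence $|V| = 1+L+B$. By the corollary $|V|=|[\mu]|+1$, equivalently the tree identity $|E|=|V|-1$, we get $|E| = L+B$. The handshake lemma gives $\sum_{v\in V}\deg v = 2|E|$, which here reads
\begin{align*}
    1 + L + 3B = 2(L+B)\,,
\end{align*}
and this rearranges to $L = B+1$. This yields $A_t = I_t+1$.

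As a cross-check, and as an alternative proof that avoids the graph machinery, I would also note a dynamic argument. Initially the invariant $A=I+1$ holds for any $\mu_0\in\mathcal{N}_T$, by the computation above. The continuous part $\mathcal{A}_\mu$ of the generator changes neither the number nor the activity of branches. A branching jump replaces $\omega$ by $\omega^-$ (inactive), $\omega^+$ (activity $a_\omega$) and $\nu$ (active), so it adds exactly one inactive and one active path. A removal jump reverses a branching event, merging two paths and deleting one, by the description in Section \ref{Section: removing branches} and the second corollary above, so it removes exactly one active and one inactive path. The only exception is the final branch from the soma, which terminates the process. Hence the difference $A_t-I_t$ is preserved by every transition.

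The main obstacle is not the counting itself but making sure the classification of nodes is exhaustive and disjoint. In particular, I must rule out inactive paths ending at the root, and rule out coincident endpoints. Both are handled by property c) and the probability-zero coincidence argument already invoked in Proposition \ref{Prop: Number of (in)active branches}, so I would simply cite those.
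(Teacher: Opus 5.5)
Your proposal is correct and is essentially the paper's argument. Both reduce, via Proposition~\ref{Prop: Number of (in)active branches}, to counting leaves and branch nodes in $G(\mu_t)$; the only difference is that the paper balances in-degrees against out-degrees ($A_t+I_t=1+2I_t$), which avoids needing $|E|=|V|-1$, whereas you use the undirected handshake lemma together with the tree identity. Your dynamic cross-check matches the informal remark after the paper's proof, and your caveat about the empty measure (where $A_t=I_t=0$) is a fair refinement the paper leaves implicit.
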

\begin{proof}
    From Proposition \ref{Prop: Number of (in)active branches} we know that the number of active branches is equal to the number of leaf nodes of $G(\mu_t)$ (which have in-degree 1, out-degree 0) and the number of inactive branches is equal to the number of branch nodes (in-degree 1, out-degree 2). Across the network, the sum of in-degrees must be equal to the sum of out-degrees (otherwise there would be an edge missing a beginning or end). Hence
    \begin{align*}
        \sum k_{in} = \sum k_{out} &\iff 0 + (1 \times A_t ) + (1\times I_t) = 1 + (0 \times A_t) + (2 \times I_t) \\
        &\iff A_t + I_t = 1 +2 I_t \\
        & \iff A_t = 1 + I_t \,.
    \end{align*}
    The first term in both sums came from the root node. 
\end{proof}
This result can also be understood more intuitively. Each branching event creates one additional active path and one additional inactive path. Removing paths with $\ell=0$ also removes one active and one inactive path. Therefore, the difference between $A_t$ and $I_t$ is constant over time. If all active branches continually retracted and we removed $\ell=0$ paths we would eventually reduce down to a single active path emanating from the soma. Therefore the difference between $A_t$ and $I_t$ is always equal to 1. 

\subsection{Branch number and length} \label{Section: generators for branch number and length}

We now consider the dynamics of several quantities of interest. First, we examine the total number of branches, given by the mass of the measure $\mu_t$
\begin{align*}
    M(\mu) := \int 1 \, \mu(d\omega,da) \,,
\end{align*}
which corresponds to $f_0(\omega)=f_1(\omega)=1$. The generator $\mathcal{A}_\mu$ does not change the mass and so the first term in $\mathcal{L}M$ is zero. Moreover branch creation events increase the mass by $2$ (creating one new active and one new inactive branch) and branch removal events decrease it by $2$ (correspondingly removing one active and one inactive branch). Therefore \eqref{Eqn: Full generator (long)} reduces to, 
\begin{align*}
    \mathcal{L}M(\mu) = 2\big\langle \Lambda(\mu,\omega)  - \lambda \mathds{1}\{\ell_\omega=0\} ,\mu \big\rangle\,.
\end{align*}
The mass is thus a jump process on $\mathbb{N}$ but is not a Markov process in itself as its jump rates depend on the measure $\mu$. 

Using Proposition \ref{Prop: number of active and inactive branches} we deduce that the number of active branches and the number of inactive branches are both jump processes with generator 
\begin{align*}
    \big\langle \Lambda(\mu,\omega)  - \lambda \mathds{1}\{\ell_\omega=0\} ,\mu \big\rangle\,.
\end{align*}

We next consider the total branch length, given by 
\begin{align*}
    L(\mu) :=  \int \ell_\omega \, \mu(d\omega,da) \,,
\end{align*}
which corresponds to $f_0(\omega)=f_1(\omega)=\ell_\omega$. In this case the generator $\mathcal{A}_\mu$ does contribute, however using the results of Appendix \ref{Appendix: Cylinder functions} we find that $\mathcal{A}_\mu L=0$. There is no difference between active branches (which are driven by a Brownian motion which has increments of mean zero) and inactive branches (which by definition do not change length). This is dependent on the choice of Brownian motion as the length process, for a discussion of alternatives see Section \ref{Section: Extensions}. The removal of branches with length zero naturally has no impact on the total branch length. Hence only the middle term of the generator \eqref{Eqn: Full generator (long)}, namely $\mathcal{A}^+$, remains. In this case \eqref{Eqn: Full generator (long)} reduces to,
\begin{align*}
    \mathcal{L}L(\mu) = \big\langle \Lambda(\mu,\omega) \,\ell_0 ,\mu \big\rangle \,.
\end{align*}

To proceed further we make the following assumptions.
\begin{assumption} \label{Assupmtion: standard branching} 
    We make the following assumptions on the branching process:
    \begin{enumerate}[label=\alph*)]
        \item The branching rate $\Lambda(\mu,\omega) = \beta \ell_\omega$ for some constant $\beta$. 
        \item The branch position is chosen uniformly along the branch, that is $\rho_\omega(du) = \ell^{-1} \, du$. 
    \end{enumerate}
\end{assumption}

Under Assumption \ref{Assupmtion: standard branching} the generator for the length $L_t$ simplifies further to
\begin{align*}
    \mathcal{L}L(\mu) 
    &= \big\langle \beta \,\ell_\omega \,\ell_0 ,\mu \big\rangle \\
    &= \beta \ell_0 \, \big\langle \ell_\omega ,\mu \big\rangle \\
    &= \beta \ell_0 \, L(\mu) \,.
\end{align*}
which indicates that the total length $L_t$ is itself a Markov process. The process is a jump process with jumps of size $\ell_0$ at rate $\beta L$. This gives a mean total length of 
\begin{align*}
    \mathbb{E}[L_t] = L_0 e^{\beta \ell_0 t} \,.
\end{align*}
If instead the branching rate is $\Lambda(\mu,\omega) = \beta \ell_\omega L(\mu)^{-1}$ then we have a constant overall growth rate (since $\mathcal{L}L(\mu) =\beta \ell_0$) and linear, rather than exponential, expected growth
\begin{align*}
    \mathbb{E}[L_t] = L_0 + \beta \ell_0 t \,.
\end{align*}

In practical terms, in either case this indicates that when using a Brownian motion for the length process, the overall growth of the neuron is driven not by the growth of individual dendrites but by branching. This theoretical result matches the same observation following model fitting in \cite{shree2022dynamic}. This also shows that it is not necessary to include positive bias in the growth of individual branches to achieve overall growth, although naturally this would contribute to faster growth. 

It is interesting to consider tracking the length of the individual branches. Under Assumption \ref{Assupmtion: standard branching} we can attempt to derive a measure-valued branching process for the path lengths. From the full process $\mu_t$ define
\begin{align*}
    \eta_t = \sum_{(\omega,a) \in [\mu_t]} \delta\{ (\ell_\omega,a) \} \,,
\end{align*}
which is now a measure on $\mathbb{R}_{\geq0}\times \{0,1\}$ (or more precisely on $[0,\ell_{\max}]\times\{0,1\}$). Note that it is still necessary to include the activity variable as inactive branches have constant lengths and we must still avoid the branching-retraction problem illustrated in Figure \ref{fig: branching-retraction problem}. The generator structure of $\eta$ is very similar to that of $\mu$: it contains a `continuous' part corresponding to the length process of each active path and a `jump' part corresponding to the creation and removal of branches. 
\begin{align} \label{Eqn: Generator of eta}
    \mathcal{A}_\eta F(\eta) = \bigg\langle a f_1''(\ell) \mathds{1}\{\ell>0\} + &\beta \int_0^{\ell} \big( F(\eta^+) - F(\eta) \big) \, du\, \nonumber\\
    & + \lambda \mathds{1}\{\ell=0\} \big( F(\eta^-) - F(\eta) \big),\eta \bigg\rangle\,.
\end{align}
Similarly to the generator \eqref{Eqn: Full generator (long)}, $\eta^+$ and $\eta^-$ are the measures resulting from creating and removing a branch respectively. 

Unfortunately, a problem arises in defining $\eta^-$. If a length process $\ell^i$ is absorbed at $\ell=0$ we must remove it from the measure $\eta_t$. The involves updating the activity and recombining certain branches connected to branch $i$. However, without the spatial information provided by $\omega^i$, we cannot determine which branches $i$ is connected to. Therefore, although the structure of \eqref{Eqn: Generator of eta} is very similar to that of \eqref{Eqn: Full generator (long)}, $\eta_t$ is not itself a Markov process as it does not contain sufficient information to determine $\eta^-$. Even beyond the precise setup of this model, it is not possible to avoid this problem, as fixing the branching-retraction problem illustrated in Figure \ref{fig: branching-retraction problem} means that the spatial position or graph structure of branches must always be accounted for in some way. 

\section{Numerical simulation} \label{Section: numerical simulation}

In this section, we outline two methods for simulating the process constructed in Section \ref{Section: Construction}. The first is a standard time discretisation (fixing a temporal resolution), while the second is an approximate process that adds `segments' of a fixed length (fixing a spatial resolution). We do not show rigorously that these approximations converge to the original process, but do provide arguments to justify their use. 

\subsection{Time discretisation} \label{Section: Time discretisation}

The most straightforward method of simulation is to take a time discretisation of the process. Denote by $\Delta t$ the discretisation timestep. Clearly it is not possible to simulate a measure directly but in our case any data structure that can hold a changing number of objects (for example a dictionary) will suffice. It is also not possible to describe the full continuous path $\omega \in \mathcal{W}$, so it will be necessary to also discretise this path. We therefore track a collection of activity-labelled paths $(\Tilde{\omega}^i, a^i)$ where $\Tilde{\omega}^i$ is a vector of length $N^i$, storing a spatially discretised path and $a^i \in \{0,1\}$ is as before. Each element of $\Tilde{\omega}^i$ will have the form $\Tilde{\omega}^i_n = (x^i_n, \theta^i_n, \ell^i_n)$ where $x^i_n$ is the spatial position, $\theta^i_n$ stores any other tracked variable (for example growth angle or branch width) and $\ell^i_n$ is the length of the path up to that point. The length of $\Tilde{\omega}^i$ is therefore given by $\ell^i_{N^i}$. 

We distinguish between two time discretisation methods: first, a method for small $\Delta t$, so that branching rates may be assumed to be constant, that updates both length and growth processes; and second, a method for large $\Delta t$ in which we exploit the separation of different processes.

First, for small timesteps we use the following procedure to update, add and remove branches. This involves looping through the existing branches. As branch activity may change during this loop, a method such as Python's `deque' is useful to track which branches still need to be updated. 
\begin{enumerate}
    \item For each active branch, draw a random number $r \sim \sqrt{\Delta t} \,\mathcal{N}(0,1)$, where $\mathcal{N}(0,1)$ is a standard Gaussian random variable, to determine if this branch will grow or retract. This simulates the update to the length process which is a Brownian motion and so has normally distributed increments. 
    \begin{enumerate}
        \item If $r \leq -\ell_N$ then this branch will be fully retracted and should be deleted. By matching start- and end-points identify the branches connected to branch $i$ that need to be recombined (see Section \ref{Section: branching process}) and update path activity accordingly. 
        \item If $ - \ell_N < r<0$ then we shorten the branch $i$. Remove any points in $\Tilde{\omega}$ with $\ell_n > \ell_N + r$ then interpolate to update the length of the final section so that the new discretised path has length $\ell_N + r$. 
        \item In the unlikely event that $r=0$, do nothing. 
        \item If $r>0$ then add a new point to $\Tilde{\omega}$ using the growth process (either by simulating the process up to time $r$ starting from $x_N$ or using the law of $\Pi_{x_N}^r$ if this is known). This new element has $\ell_{N+1}=\ell_N + r$. 
    \end{enumerate}
    \item For each branch (regardless of its activity), calculate its branching rate $\Lambda(\Tilde{\omega})$. As branching is a jump process with exponentially distributed holding times, add a new branch to this one with probability $1 - \exp(-\Lambda(\Tilde{\omega}) \Delta t)$. If a new branch is to be added:
    \begin{enumerate}
        \item There are two options to determine the new branch location: use the full distribution $\rho_\omega$ to pick the branching location and interpolate to add a new element at this value; or choose a branching location from the existing elements of $\Tilde{\omega}$. In either case a branch should \textbf{not} be added at the start or end elements of $\Tilde{\omega}$. 
        \item Create the new branch from the chosen start point up to the desired length (called $\ell_0$ in Section \ref{Section: New branches}) using the growth process. Add this new branch to the list of branches to be updated in Step 1. 
        \item Split the branch $\Tilde{\omega}$ and update path activity according to the rules described in Section \ref{Section: branching process}.
    \end{enumerate}
\end{enumerate}

In Step 1 we used the matching of start- and end-points to determine branch connections. While this is faithful to the model construction, it may be computationally more straightforward to store this information separately. This could be stored as a list of parent-child pairs or as a tree network, mirroring directly the structure of the graph representation $G(\mu_t)$. 

This method assumes that the branching rate $\Lambda(\omega)$ is effectively constant within each small timestep and any new branches will not branch again themselves during this time period. If the branching rate is slow and changes continuously with $\omega$ then these assumptions are reasonable. In this method, the time discretisation $\Delta t$ also determines the spatial resolution of the paths $\Tilde{\omega}$ as one new point is added in each timestep. To obtain higher resolution, add multiple new points if $r$ is large.

We next discuss an adaptation of this method for longer timesteps under Assumption \ref{Assupmtion: standard branching} (constant branching rate per unit length) by exploiting the separation of length and growth processes. The idea is to avoid simulating the growth process between timesteps, unless it is needed to implement branching events. This method requires a short timestep $\delta t \ll \Delta t$ on which we will simulate the length process and update the branching rates. In addition, it is necessary in this method to track the parent-child pairs separately, although as discussed above this is already computationally more efficient. 
\begin{enumerate}
    \item{} [Initial setup] At the beginning of each timestep record the length of each branch $\ell^i$. Also set the tracker $\ell^i_{\min} := \ell^i$. These will be treated as variables in their own right in this method. The value of $\ell^i_{\min}$ is tracked so that the branch can be shortened to this length (as in Figure \ref{fig: Brownian snake}). Set the time tracker $t:=0$. 
    
    \item{} [Drawing event times] For each branch draw an exponentially distributed random number $\tau^i$ and set a tracker $s^i:=0$. This will be used to determine the timing of the next branching event along $i$, using a time change to simulate the non-homogeneous jump rate \cite{resnick2013adventures}. In each small timestep the tracker $s^i$ will be updated by adding the current branching rate. When $s^i$ exceeds $\tau^i$ a branching event occurs.
    
    \item{} [Length evolution between events] In each small timestep $(\delta t)$, update the length process for each active branch $\ell^i \leftarrow \ell^i + r$, where $r \sim \sqrt{\delta t} \,\mathcal{N}(0,1)$ are iid for each branch and each timestep. At each small timestep update the time tracker $t \leftarrow t + \delta t$ and minimum length trackers $\ell^i_{\min} \leftarrow \min(\ell^i_{\min}, \ell^i + r)$. We do not update the path itself on each small timestep. 

    For all branches (active or inactive), update $s^i \leftarrow s^i + \beta \ell^i \,\delta t$. This adds the current branching rate to the tracker $s^i$ (see Step 2 above). 
    
    At each small timestep check the following:
    \begin{itemize}
        \item If $t \geq \Delta t$ go to Step 4, then Stop. 
        \item If any $s^i \geq \tau^i$ go to Step 4, then Step 5 to implement a branching event on $i$. 
        \item If any length process hits $\ell^i \leq 0$ then pause and remove this process. Use the parent-child relationships to find the parent and sibling of the removed branch and add their length processes. Update the parent-child relationships accordingly, then return to Step 2.
    \end{itemize}
    \item{} [Spatial simulation] For each path remove points from $\Tilde{\omega}^i$ and interpolate so that its length is reduced to $\ell^i_{\min}$. If a path was inactive for the entire period, then $\ell^i_{\min}$ will be equal to its initial length and no update is required. Then use the growth process to sequentially extend each path until it has length $\ell^i$. When extending the path in this way, the spatial discretisation can be chosen independently of $\Delta t$ and $\delta t$ (i.e. an arbitrarily fine discretisation may be used depending on the desired spatial resolution of paths). This mirrors the process illustrated in Figure \ref{fig: Brownian snake}. Note that $\ell^i_{\min}$ is the minimum within the current $\Delta t$ timestep.
    \item{} [Branching event] To implement a branching event on branch $i$, first draw the branch position $u \sim \textit{Unif}(0,\ell^i)$ (although this could be replaced with a more general distribution if desired). Interpolate the discretised path $\omega^i$ to find $\omega^i(u)$ and add a point here. Use the growth process to generate a new path of length $\ell_0$ starting from $\omega^i(u)$. Split $\omega^i$ at $\omega^i(u)$ and update path activities according to Section \ref{Section: New branches}, then update the parent-child relationships accordingly. Then return to Step 2.
\end{enumerate}

The main advantage of this method is that it avoids unnecessarily simulating the spatial position of branches (Step 4) but does rely more heavily on the independence of different processes. This method is beneficial when the branching rate is relatively slow and simulations should have low temporal but high spatial resolution. This method also implements a non-homogeneous Poisson process for branching, assuming that the branching rate is constant over the small timestep $(\delta t)$ but allowing it to vary over the long timestep $(\Delta t)$. 

\begin{remark}
    In both methods, it is necessary at various points to interpolate between points along the discretised path. In the case that the transition probabilities of the growth process are known, the Chapman-Kolmogorov equations can be used to derive the probability density function of this intermediate point, rather than using interpolation. Assume that we must select a point between $(x_0,\ell_0)$ and $(x_1,\ell_1)$ such that the new path length is $\ell_0 + l$. Then the new point, denoted $(y,\ell_0+l)$, has distribution
    \begin{align*}
        \mathbb{P}(dy) &= \mathbb{P}[\xi_l \in dy \, | \, \xi_0 = x_0, \xi_{\ell_{1}-\ell_0} = x_{1} ] \\
        &= P_l(x_0, y) P_{\ell_{1}-\ell_0 - l}(y,x_{1}) \, dy \,.
    \end{align*}
    This allows for a more accurate stochastic interpolation in the case that $P$ is known. 
\end{remark}

\subsubsection{Example simulations} \label{Section: Example simulations discrete time}

\begin{figure}[ht!]
    \centering
    \includegraphics[width=0.9\linewidth]{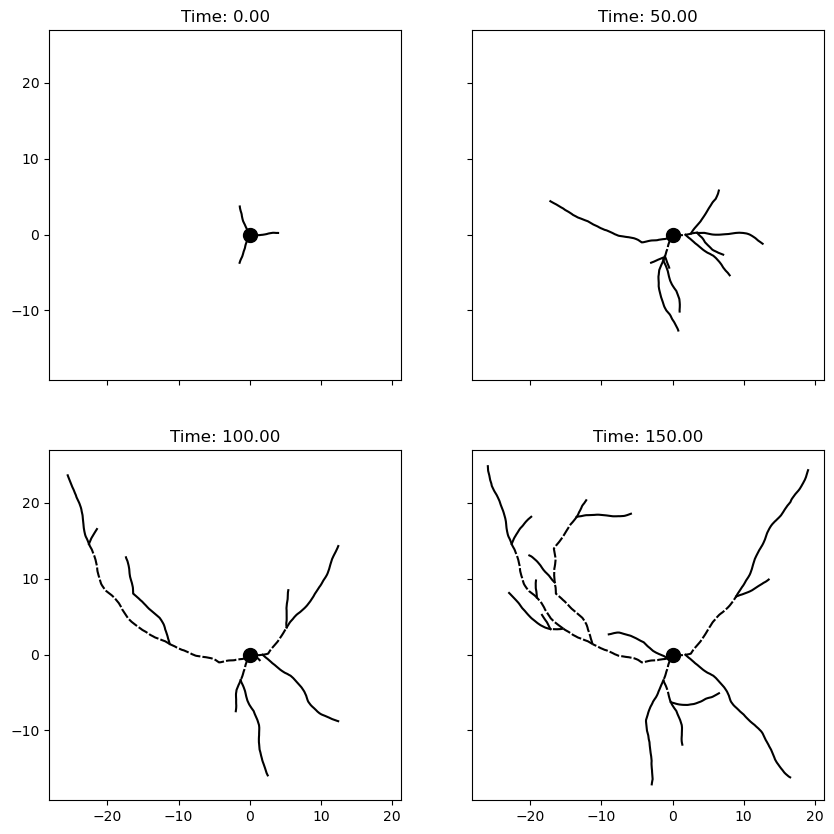}
    \caption{Example simulation using the time discretisation method for a small timestep. Full details of the simulation setup can be found in Section \ref{Section: Example simulations discrete time}. Solid lines show active branches while dashed lines show inactive branches. A small circle represents the soma. The following parameter values were used, $\Delta t = 0.1$, $\beta = 0.05$, $\varsigma = 0.5$, $b = 0.25$.}
    \label{fig: Example simulations discrete time}
\end{figure}

An example simulation for a short timestep $\Delta t=0.1$ is shown in Figure \ref{fig: Example simulations discrete time} below. The code used to generate this can be found at \url{github.com/AJNugent2/Dendrite-simulations}. The code directly simulates the graph representation $G(\mu_t)$ by tracking a tree network and adding information on the spatial position of the dendrite ($\Tilde{\omega}^i$) to the tree edges. The processes used were:
\begin{itemize}
    \item A length process given by 
    \begin{align*}
        d\ell_t = b \, dt + \sigma \, d\beta_t \,,
    \end{align*}
    as in the $b$-snake (see Section \ref{Section: Extensions}). Here the drift term $b$ was taken to be a constant bias and $\sigma$ is some positive constant. The method above is adapted to this simply by taking
    \begin{align*}
        r \sim \sqrt{\Delta t} \, \mathcal{N}(0,1) + b \Delta t \,.
    \end{align*}
    \item The growth processes was inspired by a rotational diffusion with $x^i_n = (x,y)$ (two spatial dimensions) and $\theta^i_n \in [0,2\pi)$ representing the growth angle. These evolve according to 
    \begin{align*}
        dx_t &= \cos(\theta_t) \, dt \,,\\
        dy_t &= \sin(\theta_t) \, dt \,,\\
        d\theta_t &= \varsigma \, d\beta_t \,.
    \end{align*}
    Again, $\varsigma$ is some positive constant controlling the rate at which the growth angle changes. 
    \item The branching rate was taken to be 
    \begin{align*}
        \Lambda(\mu,\omega) = c \, \ell_\omega / A_t \,,
    \end{align*}
    where $A_t$ is the current number of active branches and $c$ is a constant. 
\end{itemize}
These processes are meant as an illustrative example of what could be included in the model. It is not claimed that these represent accurate dendrite dynamics. In addition, the parameter values chosen are not fitted to data. Further work is required to estimate appropriate processes and parameters from neuron imaging data. The code available at the link above also indicates what elements should be changed to incorporate different length processes, growth processes or branching rates. 

The neuron is initialised with three short dendrites which grow and branch over time, as can be seen in Figure \ref{fig: Example simulations discrete time}. Solid lines show active branches while dashed lines show inactive branches. A small circle represents the soma. 

\subsection{Segment discretisation} \label{Section: Segment discretisation}

Rather than considering a time discretisation of the process constructed in Section \ref{Section: Construction}, we may instead simulate using an approximation of the process by adding or removing short dendrite `segments' of a fixed length $0<\varepsilon\ll 1$. This is related to the models presented in \cite{graham2001compartmental,hannezo2017unifying,hjorth2014competitive,duswald2024calibration} which also add discrete sections, as well as to cellular automata models that make stochastic updates such as \cite{hannezo2017unifying,hentschel1996diffusion,luczak2006spatial}. This approximates the process continuously in time but discretely in space, effectively setting a spatial resolution rather than a temporal one. By appropriately scaling the rate at which segments are added/removed we may recover both the Brownian snake behaviour of individual dendrites and the addition and removal of branches. Throughout this section we again make Assumption \ref{Assupmtion: standard branching}.

The segment discretisation uses a continuous time jump process whose state is a tree $G_t$, whose root $V_{root}$ corresponds to the soma. As in the description of the time discretisation, we consider only a single dendritic tree, so that the root of $G_t$ has out-degree 1 (for example, see the left panel of Figure \ref{fig: segment mapping}). This more directly reflects the tree-like structure of a neuron's dendrites; rather than reconstructing this tree by combining multiple branches we track it directly. The fixed length scale provides a consistent spatial resolution across the neuron. 

Let $(V_t,E_t)$ be the nodes and edges of $G_t$. Each node $v \in V_t$ is associated to a single point $x_v \in X$ that does not change in time. Denote by $V_t^L$ the leaf nodes of $G_t$ (that is, those with out-degree zero) and by $V_t^B$ the branch points of $G_t$ (that is, those with out-degree two). We call all other nodes (those that are not the root node, leaf nodes or branch points) internal nodes and denote by $V^I_t$ the set of internal nodes. 

There are three possible updates to $G_t$. Figure \ref{fig:segment discretisation diagram} shows an example of these, with the current state in Panel 1. Green nodes indicate leaf nodes.  
\begin{itemize}
    \item Growth (Panel 2a): a new node $v'$ is added as the child of a leaf node $v$. Its associated element of $X$ is selected randomly with law equal to that of $\{\xi_\varepsilon \, | \, \xi_0 = x_v \}$. Denote the resulting tree by $G^+\{v'\}$. This event occurs at rate $\alpha_\varepsilon/2$, independently for each leaf node. 
    \item Retraction (Panel 2b): a leaf node $v$, and the edge leading into it, are removed from $G_t$. Denote the resulting tree by $G^-\{v\}$. This event occurs at rate $\alpha_\varepsilon/2$, independently for each leaf node. 
    \item Branching (Panel 2c): from an internal node, a new branch is added. This branch is formed of a sequence of $n_\varepsilon$ nodes $\{v_1,\dots,v_{n_\varepsilon}\}$ connecting to each other in a line. Denote by $x_n$ the element of $X$ associated to $v_n$. Each $x_n$ is selected at random with law equal to that of $\{\xi_\varepsilon \, | \, \xi_0 = x_{n-1} \}$. Denote the resulting tree by $G^+\{v_1,\dots,v_{n_\varepsilon}\}$. This event occurs at rate $\beta_\varepsilon$, independently for each internal node. 
\end{itemize}


\begin{figure}[!ht]
\centering
\resizebox{.9\textwidth}{!}{%
\begin{circuitikz}
\tikzstyle{every node}=[font=\fontsize{18.2pt}{23.7pt}\selectfont]
\draw [ line width=1pt ] (2.5,14) rectangle (7.5,6.875);
\draw [ line width=1pt ] (8.75,14) rectangle (13.75,6.875);
\draw [ line width=1pt ] (13.75,14) rectangle (18.75,6.875);
\draw [ line width=1pt ] (18.75,14) rectangle (23.75,6.875);
\draw [ fill={rgb,255:red,11; green,11; blue,11}, fill opacity=1] (4.375,8.125) circle (0.15cm);
\draw [ fill={rgb,255:red,11; green,11; blue,11}, fill opacity=1] (5.25,8.625) circle (0.15cm);
\draw [ fill={rgb,255:red,11; green,11; blue,11}, fill opacity=1] (5.75,9.5) circle (0.15cm);
\draw [ fill={rgb,255:red,11; green,11; blue,11}, fill opacity=1] (5.875,10.625) circle (0.15cm);
\draw [line width=2pt, short] (5.625,11.75) -- (5.875,10.625);
\draw [line width=2pt, short] (5.875,10.625) -- (5.75,9.5);
\draw [line width=2pt, short] (5.75,9.5) -- (5.25,8.625);
\draw [line width=2pt, short] (5.25,8.625) -- (4.375,8.125);
\draw [ fill={rgb,255:red,11; green,11; blue,11}, fill opacity=1] (10.625,8.125) circle (0.15cm);
\draw [ fill={rgb,255:red,11; green,11; blue,11}, fill opacity=1] (11.5,8.625) circle (0.15cm);
\draw [ fill={rgb,255:red,11; green,11; blue,11}, fill opacity=1] (12,9.5) circle (0.15cm);
\draw [ fill={rgb,255:red,11; green,11; blue,11}, fill opacity=1] (12.125,10.625) circle (0.15cm);
\draw [ fill={rgb,255:red,11; green,11; blue,11}, fill opacity=1] (11.875,11.75) circle (0.15cm);
\draw [line width=2pt, short] (12,12.875) -- (11.875,11.875);
\draw [line width=2pt, short] (11.875,11.75) -- (12.125,10.625);
\draw [line width=2pt, short] (12.125,10.625) -- (12,9.5);
\draw [line width=2pt, short] (12,9.5) -- (11.5,8.625);
\draw [line width=2pt, short] (11.5,8.625) -- (10.625,8.125);
\draw [ fill={rgb,255:red,11; green,11; blue,11}, fill opacity=1] (15.625,8.125) circle (0.15cm);
\draw [ fill={rgb,255:red,11; green,11; blue,11}, fill opacity=1] (16.5,8.625) circle (0.15cm);
\draw [ fill={rgb,255:red,11; green,11; blue,11}, fill opacity=1] (17,9.5) circle (0.15cm);
\draw [line width=2pt, short] (17.125,10.625) -- (17,9.5);
\draw [line width=2pt, short] (17,9.5) -- (16.5,8.625);
\draw [line width=2pt, short] (16.5,8.625) -- (15.625,8.125);
\draw [ fill={rgb,255:red,11; green,11; blue,11}, fill opacity=1] (20.625,8.125) circle (0.15cm);
\draw [ fill={rgb,255:red,11; green,11; blue,11}, fill opacity=1] (21.5,8.625) circle (0.15cm);
\draw [ fill={rgb,255:red,11; green,11; blue,11}, fill opacity=1] (22,9.5) circle (0.15cm);
\draw [ fill={rgb,255:red,11; green,11; blue,11}, fill opacity=1] (22.125,10.625) circle (0.15cm);
\draw [line width=2pt, short] (21.875,11.75) -- (22.125,10.625);
\draw [line width=2pt, short] (22.125,10.625) -- (22,9.5);
\draw [line width=2pt, short] (22,9.5) -- (21.5,8.625);
\draw [line width=2pt, short] (21.5,8.625) -- (20.625,8.125);
\draw [line width=2pt, short] (4,9.125) -- (4.375,8.125);
\draw [line width=2pt, short] (23,9.25) -- (22,9.5);
\draw [ color={rgb,255:red,26; green,120; blue,9}, draw opacity=1 , fill={rgb,255:red,26; green,120; blue,9}, fill opacity=1] (23,9.25) circle (0.15cm);
\draw [ fill={rgb,255:red,11; green,11; blue,11}, fill opacity=1] (3.75,7.375) circle (0.15cm);
\draw [line width=2pt, short] (4.375,8.125) -- (3.75,7.375);
\draw [line width=2pt, short] (3.75,7.375) -- (3.75,6.875);
\draw [line width=2pt, short] (10.625,8.125) -- (10,7.375);
\draw [ fill={rgb,255:red,11; green,11; blue,11}, fill opacity=1] (10,7.375) circle (0.15cm);
\draw [line width=2pt, short] (10,7.375) -- (10,6.875);
\draw [line width=2pt, short] (15.625,8.125) -- (15,7.375);
\draw [ fill={rgb,255:red,11; green,11; blue,11}, fill opacity=1] (15,7.375) circle (0.15cm);
\draw [line width=2pt, short] (15,7.375) -- (15,6.875);
\draw [line width=2pt, short] (20.625,8.125) -- (20,7.375);
\draw [ fill={rgb,255:red,11; green,11; blue,11}, fill opacity=1] (20,7.375) circle (0.15cm);
\draw [line width=2pt, short] (20,7.375) -- (20,6.875);
\draw [ color={rgb,255:red,26; green,120; blue,9}, draw opacity=1 , fill={rgb,255:red,26; green,120; blue,9}, fill opacity=1] (5.625,11.75) circle (0.15cm);
\draw [ color={rgb,255:red,26; green,120; blue,9}, draw opacity=1 , fill={rgb,255:red,26; green,120; blue,9}, fill opacity=1] (4,9.125) circle (0.15cm);
\draw [ color={rgb,255:red,26; green,120; blue,9}, draw opacity=1 , fill={rgb,255:red,26; green,120; blue,9}, fill opacity=1] (12,12.875) circle (0.15cm);
\draw [ color={rgb,255:red,26; green,120; blue,9}, draw opacity=1 , fill={rgb,255:red,26; green,120; blue,9}, fill opacity=1] (17.125,10.625) circle (0.15cm);
\draw [ color={rgb,255:red,26; green,120; blue,9}, draw opacity=1 , fill={rgb,255:red,26; green,120; blue,9}, fill opacity=1] (21.875,11.75) circle (0.15cm);
\node [font=\fontsize{19.9pt}{25.9pt}\selectfont, inner xsep=0.080cm, inner ysep=0.085cm, rounded corners=0.020cm] at (3.25,13.125) {1)};
\node [font=\fontsize{19.9pt}{25.9pt}\selectfont, inner xsep=0.080cm, inner ysep=0.085cm, rounded corners=0.020cm] at (9.625,13.125) {2a)};
\node [font=\fontsize{19.9pt}{25.9pt}\selectfont, inner xsep=0.080cm, inner ysep=0.085cm, rounded corners=0.020cm] at (14.625,13.125) {2b)};
\node [font=\fontsize{19.9pt}{25.9pt}\selectfont, inner xsep=0.080cm, inner ysep=0.085cm, rounded corners=0.020cm] at (19.625,13.125) {2c)};
\draw [line width=2pt, short] (10.25,9.125) -- (10.625,8.125);
\draw [ color={rgb,255:red,26; green,120; blue,9}, draw opacity=1 , fill={rgb,255:red,26; green,120; blue,9}, fill opacity=1] (10.25,9.125) circle (0.15cm);
\draw [line width=2pt, short] (15.25,9.125) -- (15.625,8.125);
\draw [ color={rgb,255:red,26; green,120; blue,9}, draw opacity=1 , fill={rgb,255:red,26; green,120; blue,9}, fill opacity=1] (15.25,9.125) circle (0.15cm);
\draw [line width=2pt, short] (20.25,9.125) -- (20.625,8.125);
\draw [ color={rgb,255:red,26; green,120; blue,9}, draw opacity=1 , fill={rgb,255:red,26; green,120; blue,9}, fill opacity=1] (20.25,9.125) circle (0.15cm);
\end{circuitikz}
}%
\caption{Example of the three possible updates in the segment discretisation. Green nodes indicate leaf nodes. 1) The current state. 2) A possible growth update. 3) A possible retraction update. 4) A possible branching update.}
\label{fig:segment discretisation diagram}
\end{figure}

Simulating the neuron as a tree in this way also makes growth and branching essentially the same: both add a new node using the law of the growth process, the only difference is that growth events are from leaf nodes and branching events are from internal nodes. This is in contrast to the time discretisation in which growth corresponds directly to the length process and branching events must be added separately. 

The dependence of the event rates ($\alpha_\varepsilon$ and $\beta_\varepsilon$) on $\varepsilon$ will be chosen so that as $\varepsilon\rightarrow0$ the original process is recovered. The number of nodes added for each new branch $n_\varepsilon$ is chosen as 
\begin{align*}
    n_\varepsilon = \lceil \ell_0/\varepsilon \rceil \,,
\end{align*}
as this is at least $1$ and means the resulting branch has length $\varepsilon n _\varepsilon=\varepsilon \lceil \ell_0/\varepsilon \rceil \rightarrow \ell_0$ as $\varepsilon\rightarrow0$. In practice in simulations, as $\ell_0$ and $\varepsilon$ should both be small, we may simply take $n_\varepsilon = 1$. 

Denote by $\Pi^x_\varepsilon(dx)$ the probability distribution of $\{\xi_\varepsilon \, | \, \xi_0 = x \}$. Then the process $G_t$ is a jump process with generator
\begin{align} \label{Eqn: Segment discretisation generator}
    AF(G) &= \sum_{v \in V_L} \frac{\alpha_\varepsilon}{2} \Big( F(G^-\{v\}) - F(G) \Big) \nonumber\\
    &+ \sum_{v \in V_L} \frac{\alpha_\varepsilon}{2} \int \Big( F(G^+\{v'\}) - F(G) \Big) \,\Pi^{x_v}_\varepsilon(dx) \, \nonumber\\
    &+ \sum_{v \in V_L} \beta_\varepsilon \int \Big( F(G^+\{v_1,\dots,v_{n_\varepsilon}\}) - F(G) \Big) \,\Pi^{x_v}_\varepsilon(dx_1) \dots \,\Pi^{x_{n_\varepsilon-1}}_\varepsilon(dx_{n_\varepsilon}) 
\end{align}
where $F$ is any bounded function on the space of rooted trees. As this process is a jump process, it can be simulated exactly using the Gillespie algorithm \cite{gillespie1977exact}. As there is no need to track activity and parent-child relationships this simulation method is much more straightforward. However, when $\varepsilon$ is small the event rates become faster and so simulations with the Gillespie algorithm become slower. 


\begin{figure}[!ht]
\centering
\resizebox{.6\textwidth}{!}{%
\begin{circuitikz}
\tikzstyle{every node}=[font=\fontsize{27.9pt}{36.4pt}\selectfont]
\draw [ color={rgb,255:red,0; green,0; blue,0}, draw opacity=1, line width=4pt, dash pattern=on 3\pgflinewidth off 1.5\pgflinewidth] (9.25,7.125) -- (11.125,9.375);
\draw [ color={rgb,255:red,0; green,0; blue,0}, draw opacity=1, line width=4pt, dash pattern=on 3\pgflinewidth off 1.5\pgflinewidth] (11.125,9.375) -- (12.125,12.25);
\draw [ color={rgb,255:red,0; green,0; blue,0}, draw opacity=1, line width=4pt, dash pattern=on 3\pgflinewidth off 1.5\pgflinewidth] (12.125,12.25) -- (14.5,14);
\draw [line width=4pt, short] (14.5,14) -- (16.125,16.125);
\draw [line width=4pt, short] (16.125,16.125) -- (18.875,17.625);
\draw [line width=4pt, short] (18.875,17.625) -- (20,20);
\draw [line width=4pt, short] (14.5,14) -- (17.25,13.375);
\draw [line width=4pt, short] (17.25,13.375) -- (20.25,14.125);
\draw [line width=4pt, short] (11.125,9.375) -- (9.25,12);
\draw [line width=4pt, short] (9.25,12) -- (9.625,14.875);
\draw [line width=4pt, short] (9.625,14.875) -- (8.75,17.375);
\draw [ color={rgb,255:red,0; green,0; blue,0}, draw opacity=1, line width=4pt, dash pattern=on 3\pgflinewidth off 1.5\pgflinewidth] (9.25,7.125) -- (8.5,4.75);
\draw [ color={rgb,255:red,0; green,0; blue,0}, draw opacity=1, line width=4pt, dash pattern=on 3\pgflinewidth off 1.5\pgflinewidth] (8.5,4.75) -- (8.75,2.125);
\draw [ fill={rgb,255:red,0; green,0; blue,0}, fill opacity=1, line width=4pt ] (11.125,9.375) circle (0.4cm);
\draw [ fill={rgb,255:red,0; green,0; blue,0}, fill opacity=1, line width=4pt ] (8.75,17.5) circle (0.4cm);
\draw [ fill={rgb,255:red,0; green,0; blue,0}, fill opacity=1, line width=4pt ] (14.5,14) circle (0.4cm);
\draw [ fill={rgb,255:red,0; green,0; blue,0}, fill opacity=1, line width=4pt ] (19.875,19.875) circle (0.4cm);
\draw [ fill={rgb,255:red,0; green,0; blue,0}, fill opacity=1, line width=4pt ] (20.25,14.125) circle (0.4cm);
\draw [line width=4pt, short] (-9.5,7.125) -- (-7.625,9.375);
\draw [line width=4pt, short] (-7.625,9.375) -- (-6.625,12.25);
\draw [line width=4pt, short] (-6.625,12.25) -- (-4.25,14);
\draw [line width=4pt, short] (-4.25,14) -- (-2.625,16.125);
\draw [line width=4pt, short] (-2.625,16.125) -- (0.125,17.625);
\draw [line width=4pt, short] (0.125,17.625) -- (1.25,20);
\draw [line width=4pt, short] (-4.25,14) -- (-1.5,13.375);
\draw [line width=4pt, short] (-1.5,13.375) -- (1.5,14.125);
\draw [line width=4pt, short] (-7.625,9.375) -- (-9.5,12);
\draw [line width=4pt, short] (-9.5,12) -- (-9.125,14.875);
\draw [line width=4pt, short] (-9.125,14.875) -- (-10,17.375);
\draw [line width=4pt, short] (-9.5,7.125) -- (-10.25,4.75);
\draw [line width=4pt, short] (-10.25,4.75) -- (-10,2.125);
\draw [ fill={rgb,255:red,0; green,0; blue,0}, fill opacity=1, line width=4pt ] (-10,2) circle (0.75cm);
\draw [ fill={rgb,255:red,0; green,0; blue,0}, fill opacity=1, line width=4pt ] (-10.25,4.625) circle (0.4cm);
\draw [ fill={rgb,255:red,0; green,0; blue,0}, fill opacity=1, line width=4pt ] (-7.625,9.375) circle (0.4cm);
\draw [ fill={rgb,255:red,0; green,0; blue,0}, fill opacity=1, line width=4pt ] (-9.5,7.125) circle (0.4cm);
\draw [ fill={rgb,255:red,0; green,0; blue,0}, fill opacity=1, line width=4pt ] (-9.5,12) circle (0.4cm);
\draw [ fill={rgb,255:red,0; green,0; blue,0}, fill opacity=1, line width=4pt ] (-9.125,15) circle (0.4cm);
\draw [ fill={rgb,255:red,0; green,0; blue,0}, fill opacity=1, line width=4pt ] (-10,17.5) circle (0.4cm);
\draw [ fill={rgb,255:red,0; green,0; blue,0}, fill opacity=1, line width=4pt ] (-6.625,12.25) circle (0.4cm);
\draw [ fill={rgb,255:red,0; green,0; blue,0}, fill opacity=1, line width=4pt ] (-4.25,14) circle (0.4cm);
\draw [ fill={rgb,255:red,0; green,0; blue,0}, fill opacity=1, line width=4pt ] (-2.625,16.125) circle (0.4cm);
\draw [ fill={rgb,255:red,0; green,0; blue,0}, fill opacity=1, line width=4pt ] (-1.5,13.375) circle (0.4cm);
\draw [ fill={rgb,255:red,0; green,0; blue,0}, fill opacity=1, line width=4pt ] (0.125,17.625) circle (0.4cm);
\draw [ fill={rgb,255:red,0; green,0; blue,0}, fill opacity=1, line width=4pt ] (1.125,19.875) circle (0.4cm);
\draw [ fill={rgb,255:red,0; green,0; blue,0}, fill opacity=1, line width=4pt ] (1.5,14.125) circle (0.4cm);
\draw [ fill={rgb,255:red,0; green,0; blue,0}, fill opacity=1, line width=4pt ] (8.75,2) circle (0.75cm);
\draw [ color={rgb,255:red,23; green,23; blue,23}, draw opacity=1, line width=4pt, -{Stealth[scale=1.5]}, ] (1.375,9.375) -- (6.375,9.375);
\end{circuitikz}
}%
\caption{Diagram showing the mapping of $G_t$ to $\Tilde{G}$ by splitting the network into active paths (solid lines) and inactive paths (dashed lines). In this case there are three active and two inactive paths in $\Tilde{G}$. In both graphs the soma (also the root node) is shown by a larger node.}
\label{fig: segment mapping}
\end{figure}

To compare the rooted tree $G_t$ against the measure-valued process $\mu_t$ constructed in Section \ref{Section: Construction}, we introduce a mapping from $G_t$ to the space of measures $\mathcal{N}_\mathcal{S}^\#$. We begin by constructing a new network $\Tilde{G}=(\Tilde{V},\Tilde{E})$ that mirrors the graph representation in Section \ref{Section: Graph properties}. This may be most easily understood through the visualisation shown in Figure \ref{fig: segment mapping}. The nodes of $\Tilde{G}$ are the nodes of $G_t$ minus the internal nodes (i.e. the root, branching and leaf nodes). The edges of $\Tilde{G}$ correspond to the paths in $G_t$ between these nodes. Taking the $x$-values along this path, we assign an element $\omega^e \in \W$ to the edge $e \in \Tilde{E}$, such that $\omega^e$ passes through these $x$-values. Mark these as active ($a^e=1$) if the edge $e$ ends in a leaf node of $G_t$ and inactive otherwise. Active edges are shown in solid lines in Figure \ref{fig: segment mapping} and inactive edges are shown in dashed lines. Finally, define the measure
\begin{align*}
    \mu(G_t) := \sum_{e \in \Tilde{E}} \delta\{(\omega^e, a^e)\} \,.
\end{align*}
This gives a mapping from the segment-graph $G_t$ to a measure that has the same structure and spatial embedding. In particular, the graph representation of $\mu(G_t)$ is exactly $\Tilde{G}$.

This construction allows us to compare $\mu_t$ against $\mu(G_t)$, and so compare the original process against the segment discretisation. For example, we may now meaningfully talk about the length process $\ell^i_t$ corresponding to a branch $\omega^i_t$ in $\mu(G_t)$. Below we outline an argument that the segment discretisation converges to the original process as $\varepsilon\rightarrow0$. 

For an active path in $\mu(G_t)$, the length $\ell_t$ increases/decreases in jumps of size $\varepsilon>0$ as segments are added/removed. This has generator 
\begin{align}
    Af(\ell) 
    &= \frac{\alpha_\varepsilon}{2} \Big( f(\ell+\varepsilon) - f(\ell) \Big) + \frac{\alpha_\varepsilon}{2} \Big( f(\ell-\varepsilon) - f(\ell) \Big) \nonumber \\
    &\approx \frac{\alpha_\varepsilon}{2} \Big( \varepsilon f'(\ell) + \frac{1}{2} \varepsilon^2 f''(\ell) \Big) + \frac{\alpha_\varepsilon}{2} \Big( - \varepsilon f'(\ell) + \frac{1}{2} \varepsilon^2 f''(\ell) \Big)  \label{Eqn: Generator of jump length process approx} \\
    &\approx \frac{\alpha_\varepsilon}{2} \varepsilon^2 f''(\ell) \,. \nonumber
\end{align}
Assume that $\alpha_\varepsilon = \frac{\alpha}{\varepsilon^2}$ which gives 
\begin{align*}
    Af(\ell)\rightarrow \frac{\alpha}{2} f''(\ell) \,,
\end{align*}
as $\varepsilon\rightarrow0$, which is exactly the generator of Brownian motion. If the leaf node is only connected to its branching point, removing it fully retracts this branch. This corresponds to an absorbing boundary at $\ell=0$. Therefore as $\varepsilon\rightarrow0$ we exactly recover the correct dynamics of the length process. 

Consider next the embedding in $X$. Recall that the edges $\omega^e$ are constructed to pass through a sequence of points $\{x_0,\dots,x_n\}$ with each $x_{i+1}$ chosen according to the law $\Pi_\varepsilon^{x_i}$. As $\varepsilon\rightarrow0$ these points become dense and, as $\xi$ is a Feller process, $\omega^e$ will converge to a path of $\xi$. Therefore the convergence of the length process $\ell_t$ to a Brownian motion implies the convergence of $\omega^e$ to a Brownian snake for each active branch.

We next consider the addition and removal of branches. Each internal node in $G_t$ gives rise to new branches at a rate $\beta_\varepsilon$, therefore the corresponding path $\omega \in \mu(G_t)$ has a branch rate of $\beta_\varepsilon$ multiplied by the number of internal nodes. Since each connection between internal nodes in $G_t$ contributes a path of length $\varepsilon$, $\ell_\omega = (\text{\# nodes} - 1) \times \varepsilon$, hence the number of nodes can be calculated as $1 + \ell_\omega \varepsilon^{-1}$ and the number of internal nodes is $\ell_\omega \varepsilon^{-1} - 1$. The branching rate for $\omega$ is therefore $\beta_\varepsilon (\ell_\omega \varepsilon^{-1} - 1)$. Choosing $\beta_\varepsilon = \beta\varepsilon$, this is equal to $\beta (\ell_\omega - \varepsilon) \rightarrow \beta \ell_\omega$ as required. The distribution of branching locations is limited to the location of internal nodes but is uniform amongst them. As $\varepsilon\rightarrow0$ this will naturally converge to the uniform distribution on $(0,\ell_\omega)$ as required. Therefore the branching mechanism of $\mu(G_t)$ also converges to that of the original process.

\begin{remark}
    The extension and removal of leaf nodes occurs at a rate $\alpha_\varepsilon = \alpha \varepsilon^{-2}$ and so becomes extremely fast as $\varepsilon\rightarrow0$, to properly capture the behaviour of Brownian motion. Meanwhile the branch rate $\beta_\varepsilon = \beta \varepsilon$ becomes slow as $\varepsilon\rightarrow0$ so that branch events remain sufficiently rare. 
\end{remark}

In this case there is no need to explicitly include the removal of a branch with $\ell=0$ from the measure $\mu(G_t)$, as a branch having no remaining nodes automatically removes it.  

Finally, note that property d) of Theorem \ref{Thm: Inherited graph properties} means the activity rule for $\mu(G_t)$ (only paths ending in leaf nodes are active) is consistent with the setup described in Section \ref{Section: branching process}. 

Therefore, if the rates $\alpha_\varepsilon$ and $\beta_\varepsilon$ are scaled appropriately, this segment discretisation will approximate the original process as $\varepsilon\rightarrow0$. On the one hand this seems perfectly natural - we are still modelling the three processes of growth, retraction and branching. On the other hand it is somewhat remarkable that this simple addition and removal of segments suitably approximates a much more complex process tracking and creating multiple independent activity-labelled branches. 

\begin{remark} \label{Remark: on bias in segments}
    It is possible to introduce bias in the growth/retraction process, however some care must be taken in choosing the correct event rates. To generalise the factor $1/2$, fix some $r > 0$ and consider
    \begin{align*}
        \alpha_{\text{growth}} = \frac{r}{r+1} \alpha_\varepsilon = \frac{r}{r+1} \frac{\alpha}{\varepsilon^2} \,, \quad
        \alpha_{\text{retraction}} = \frac{1}{r+1} \alpha_\varepsilon = \frac{1}{r+1} \frac{\alpha}{\varepsilon^2} \,,
    \end{align*}
    in which case $r=1$ gives back the initial rates. However \eqref{Eqn: Generator of jump length process approx} then becomes
    \begin{align*}
        Af(\ell) &\approx \frac{r}{r+1} \frac{\alpha}{\varepsilon^2} \Big( \varepsilon f'(\ell) + \frac{1}{2} \varepsilon^2 f''(\ell) \Big) + \frac{1}{r+1} \frac{\alpha}{\varepsilon^2} \Big( - \varepsilon f'(\ell) + \frac{1}{2} \varepsilon^2 f''(\ell) \Big) \\
        &= \Big(\frac{r-1}{r+1}\Big) \frac{\alpha}{\varepsilon}  f'(\ell) + \frac{\alpha}{2} f''(\ell) \,,
    \end{align*}
    which diverges as $\varepsilon\rightarrow0$ for $r\neq 1$. Instead add a biased order $\varepsilon^{-1}$ component to the rate
    \begin{align*}
        \alpha_{\text{growth}} = \frac{1}{2} \frac{\alpha}{\varepsilon^2} + \frac{r}{r+1} \frac{\alpha}{\varepsilon} \,, \quad
        \alpha_{\text{retraction}} = \frac{1}{2} \frac{\alpha}{\varepsilon^2} + \frac{1}{r+1} \frac{\alpha}{\varepsilon} \,,
    \end{align*}
    so that
    \begin{align*}
        Af(\ell) &\approx \Big( \frac{1}{2} \frac{\alpha}{\varepsilon^2} + \frac{r}{r+1} \frac{\alpha}{\varepsilon} \Big)\Big( \varepsilon f'(\ell) + \frac{1}{2} \varepsilon^2 f''(\ell) \Big) + \Big( \frac{1}{2} \frac{\alpha}{\varepsilon^2} + \frac{1}{r+1} \frac{\alpha}{\varepsilon} \Big) \Big( - \varepsilon f'(\ell) + \frac{1}{2} \varepsilon^2 f''(\ell) \Big) \\
        &=\frac{\alpha}{2} f''(\ell) + \alpha \Big( \frac{r-1}{r+1} \Big) f'(\ell) + \frac{\alpha}{2} \varepsilon f''(\ell) \\
        &\rightarrow \frac{\alpha}{2} f''(\ell) + \alpha \Big( \frac{r-1}{r+1} \Big) f'(\ell)  \,.
    \end{align*}
    This corresponds to a length process given by the SDE below, discussed further in Section \ref{Section: Alternative length processes},
    \begin{align*}
        d\ell_t = \alpha \Big( \frac{r-1}{r+1} \Big) \, dt + \sqrt{\alpha} \,\,d\beta_t \,.
    \end{align*}
\end{remark}

\subsubsection{Example simulations} \label{Section: Example simulations segments}

\begin{figure}[ht!]
    \centering
    \includegraphics[width=0.9\linewidth]{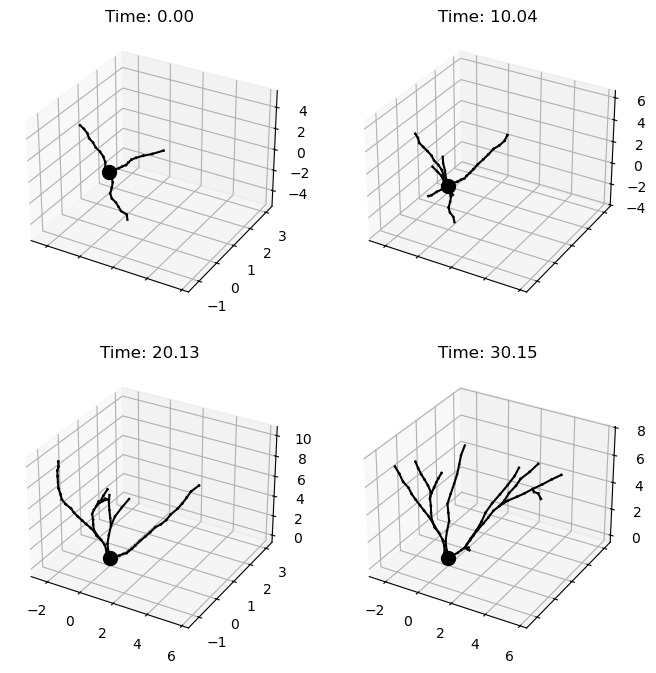}
    \caption{Example of segment discretisation simulation in 3D. The graph $G_t$ is plotted at four timepoints. The soma is indicated by a larger node. The following parameter values were used: $\varepsilon=0.5$, $\alpha=1$, $\beta=2$, $\beta_s = 0.5$, $\ell_0=1$, $\varsigma = 0.25$. As in Figure \ref{fig: Example simulations discrete time} the process is initialised with three short dendrites.}
    \label{fig:segment discretisation simulation}
\end{figure}

Figure \ref{fig:segment discretisation simulation} shows an example of the segment discretisation. The standard updates shown in Figure \ref{fig:segment discretisation diagram} are used, with the addition of a fourth update that adds a new branch from the soma (as discussed in Section \ref{Section: Soma branching}) at rate $\beta_s \varepsilon$. A similar growth process to the example in Section \ref{Section: Example simulations discrete time} is used, except that for this example the model is placed in three spatial dimensions, meaning two growth angles are required. This demonstrates the modelling flexibility gained by separating the growth process from other features. The only other alteration is a change in the branching rate, each internal node branches at rate $\beta (L_t)^{-1}$, where $L_t$ is the total length of the neuron and is equal to $\varepsilon$ multiplied by the total number of nodes. This means that the addition of new branches becomes less frequent as the neuron becomes larger. Note that having the branching rate depend on features of the graph $G_t$ mirrors including dependence on $\mu$ in the branching rate $\Lambda(\mu,\omega)$. 

Figure \ref{fig:segment discretisation simulation} shows plots of the graph $G_t$ at four timepoints. Due to the nature of the segment discretisation there is no notion of active or inactive branches, so the entire graph is plotted in the same way, with the exception of the soma being indicated by a larger node. As we are using a Gillespie simulation the timepoints at which snapshots are plotted are not exactly $t=10,20,30$ but the closet event times recorded. The code for this simulation can also be found at \url{github.com/AJNugent2/Dendrite-simulations}.

The initial direction of each branch was chosen randomly and the overall growth into positive $z$ values is a random occurrence, rather than a result of any bias in the growth direction. In this case it can be explained by the early retraction of the branch initially pointing downwards, while the two branches that persist both initially point upwards. 

Though their forms are very different, in the one case tracking branches and in the other a dynamic graph, both simulation methods discussed approximate the same process. The choice between them depends upon the desired spatial and temporal resolution, as well as the complexity of the length, growth and branching processes one wishes to include.

\section{Extensions} \label{Section: Extensions}

In this section we discuss several possible adaptations of the construction in Section \ref{Section: Construction}. These are intended to make existing elements of the process more realistic or flexible, or to add new features.

\subsection{Alternative length processes} \label{Section: Alternative length processes}

The setup described in Section \ref{Section: Length and growth processes} uses a Brownian motion for the length process, but when modelling dendrites it may be useful to consider alternatives to this.

One such alternative is the $b$-snake introduced in \cite{abraham2002representations,dhersin2000stochastic}. This is a modification of the Brownian snake in which a drift is introduced to the length process, which is given by
\begin{align*}
    d\ell_t = - b(\omega_t) \, dt +  d\Tilde{\beta}_t \,,
\end{align*}
where $\Tilde{\beta}_t$ is a linear reflecting Brownian motion and $b$ is a function on $\mathcal{W}$. It has also been shown that the corresponding version of the martingale problem for the $b$-snake is well-posed \cite[Section 2]{abraham2002representations}, hence Proposition \ref{Prop: Martingale problem for Brownian snake well-posed} may be extended immediately to this case. Again we may stop the process at $\ell=0$ and use Theorem 4.6.1 of \cite{ethier2009markov} to obtain well-posedness of the process absorbed at $\ell=0$. 

This function $b:\W \rightarrow \mathbb{R}$ could be used to encourage growth in a particular direction, for example in response to a chemoattractant, or to limit the dendrite's growth, for example by increasing $b$ as $\ell_t$ approaches some maximum length. Both these examples use only information about the endpoint $(\ell, \hat{\omega})$, but in principle information from along the entire path could be utilised. This also demonstrates that it is possible to introduce coupling between the length process and the spatial position. 

Another option is the use of time changes. This can be seen in the construction of the On/Off Brownian snake in \cite{jacobi2024off} where there are periods of `hibernation' in which the length process is frozen. This allows the inclusion of periods of inactivity or periods of faster/slower growth. Such random time changes are discussed in \cite[Chapter 6]{ethier2009markov}. 

A much broader class of length processes is considered in Appendix \ref{Appendix: Deterministic motion} in the case of deterministic growth. In this setting, each branch is assumed to simply grow forwards in a straight line in some given direction. While this significantly reduces the complexity of the model, this assumption is highly unrealistic and removes much of the value in modelling the spatial position of neurons.

It has also been proposed \cite{shree2022dynamic} that dendritic growth follows a `dynamic instability' and switches between longer periods of growth, pauses and retraction. This again corresponds to a change in the length process $\ell_t$. Rather than considering an absorbed Brownian motion define
\begin{align*}
    \ell_t = \ell_0 + \int_0^t v_s \, ds \,,
\end{align*}
where $v_s \in \mathbb{R}$ is the growth/retraction rate at time $s$ and is defined by a jump process on $\mathbb{R}$. As with the Brownian motion we may define $\tau$ as the hitting time of zero and use the stopped process $\ell_{t \wedge \tau}$ to absorb the process at $\ell=0$. This length process is slightly different from the others we have considered as the evolution of $\ell_t$ depends on the additional variable $v_t$, which is not included in the state space $\W$. Therefore the construction below of the branching process must be altered slightly to include this variable (in much the same way that the `activity' variable was introduced). The structure of the generator in this case is examined in Appendix \ref{Appendix: Dynamic instability}. 

In practice, a version of the Brownian snake could be defined for any reasonable length process (for example a Feller process that is almost surely finite with almost surely continuous sample paths). This allows existing models for the evolution of dendrite lengths to be incorporated into this setup. From this length process, the generator of the corresponding Brownian snake can be defined by \eqref{Eqn: Definition of generator}. The proof that such a process is \textit{uniquely} defined by its generator (extending the results of \cite{dhersin2000stochastic}) is missing in general, leaving a technical gap in the construction. However the remainder of the construction, in which we switch to a measure-valued process and add jumps for new branches, follows exactly as in Section \ref{Section: branching process}. 

As the generator is different to the standard case the results of Section \ref{Section: generators for branch number and length} on the total length process $L_t$ will no longer hold, but the remainder of the results in Section \ref{Section: Properties} are independent of this and so still hold. The simulation of the length process in the time discretisation and the rates $\alpha_\varepsilon$ in the segment discretisation will also need to be adapted (see Remark \ref{Remark: on bias in segments} for the latter). 

\subsection{Heterogeneity}

In certain situations it may be desirable to have some heterogeneity between branches. A clear example of this is to differentiate between dendrites and an axon. We discuss several ways heterogeneity could be incorporated and give examples of each. 

The simplest case is that of constant parameter/parameters per branch. For each branch let $\vartheta^i$ be a collection of parameters assigned to branch $i$ upon its creation. These parameters are different for each branch but do not change over time. These could be deterministic or chosen randomly from some given distribution, possibly depending on the time of the branch creation or features of the parent branch. Call the parameter space $\Theta$. 

From a technical point of view, these parameters are tracked in a similar way to the activity variable $a^i$. Defining $\mathcal{S} = \W \times \{0,1\} \times \Theta$ we still have that the measure $\mu_t$ is defined as an element of $\mathcal{N}_\mathcal{S}^{\#}$. This immediately allows us to track the value of $\vartheta$ for each branch. The question of how it affects the generator depends on the purpose of the parameter, we give several examples below. 

\begin{example}
    Heterogeneous branching rate.

    In this first example we assume that some dendrites branch more than others, with $\vartheta > 0$ representing the relative branching rate of the dendrite, so $\vartheta > 1$ indicates faster than normal branching and $\vartheta < 1$ indicates slower than normal branching. Assume that this parameter is chosen independently for each branch upon its creation from a given density $\rho_\vartheta \in \mathcal{M}(\Theta)$. This translates directly into the branching rate, so that the this part of the generator becomes
    \begin{align*}
        \mathcal{A}^+F(\mu) 
        &= \bigg\langle \int\int_0^{\ell} \textcolor{blue}{\int_\Theta} \textcolor{blue}{\vartheta} \Lambda(\omega) \,\Big( F(\mu^+(\omega,a,u,\textcolor{blue}{\vartheta'})) - F(\mu) \Big) \, \textcolor{blue}{\rho_\vartheta(d\vartheta')} \, \rho_{\omega}(du) \,P(d\nu) \,,\mu \bigg\rangle \,.
    \end{align*}
    with the newly added terms highlighted in blue. There are two effects to be accounted for: the branching rate changing from $\Lambda(\omega)$ to $\Lambda(\omega,\vartheta)=\vartheta \Lambda(\omega)$ and new branches being given their own parameter $\vartheta'$ from the distribution $\rho_\vartheta$. 
\end{example}

\begin{example}
    Heterogeneous growth rates.

    In this example we consider that some branches might be more likely to grow/retract than others. Assume we have a $b$-snake (see Section \ref{Section: Alternative length processes}) with length process
    \begin{align*}
        d\ell_t = d\Tilde{\beta}_t - b(\omega_t,\vartheta) \, dt \,,
    \end{align*}
    where the drift term $b(\omega_t,\vartheta)$ is now dependent on the parameter $\vartheta$. For a given $\vartheta$ we can define a version of the generator for the $b$-snake with this drift term, denoting this by $\mathcal{A}(\vartheta)$. Again this translates directly into the generator, so the continuous part becomes
    \begin{align} 
        \mathcal{A}_\mu F(\mu) &= \bigg\langle a\mathcal{A}(\textcolor{blue}{\vartheta}) f_1(\omega) \,,\mu \bigg\rangle\,,
    \end{align}
    where again the newly added $\vartheta$ is highlighted in blue. Although $\vartheta$ does not itself affect branching we still need to update the branching term so that new branches are given a parameter value. The branching generator is 
    \begin{align*}
        \mathcal{A}^+F(\mu) 
        &= \bigg\langle \int\int_0^{\ell} \textcolor{blue}{\int_\Theta} \Lambda(\omega) \,\Big( F(\mu^+(\omega,a,u,\textcolor{blue}{\vartheta'})) - F(\mu) \Big) \, \textcolor{blue}{\rho_\vartheta(d\vartheta')} \, \rho_{\omega}(du) \,P(d\nu) \,,\mu \bigg\rangle \,.
    \end{align*}
    Other parameters affecting the dynamics of an individual branch would be incorporated in the same way, using a single-Branch generator $\mathcal{A}(\vartheta)$ that is dependent on $\vartheta$. 
\end{example} 

\begin{example}
    Modelling the axon. 

    Assume that one of the branches is an axon. In this case $\vartheta \in \{0,1\}$ with $\vartheta=0$ for dendrites and $\vartheta=1$ for the axon. The axon will have an increased growth rate but a decreased branching rate, combining the effects of the first two examples. The value of $\vartheta$ is simply copied from the parent branch to the child branch. 
\end{example}

\begin{example}
    Tree depth and distance to soma.

    In this example $\vartheta$ is not chosen randomly but is used to track information about the graph structure. When a new branch is created its tree depth is simply the tree depth of its parent branch plus one. Additionally, the distance to the soma is that of its parent branch plus $u$, where $u$ is the distance along the parent branch at which the new branch is added. These features could simply be recorded or they could influence the dynamics; for example, the branching rate could decrease as the tree depth or distance to soma increases.
\end{example}

The situation is more complex when these parameters change over time. If the change is deterministic then one could simply add a known time dependence $\vartheta(t)$. As the process is well-posed for fixed $\vartheta$ it is also well-posed for deterministically changing $\vartheta$, provided that all event rates remain uniformly bounded (see \cite[Chapter 4.7A]{ethier2009markov} for details). We give an example in this case. 

\begin{example}
    Dependence on branch age.

    Similarly to Example 4 we introduce a deterministic variable that can either simply be tracked, or used to influence the dynamics. In this case the variable is the age of a branch, initially set to 0 when the branch is created and increasing linearly with rate 1. Tracking the age of each branch allows the length, growth and branching processes to depend on this age. For example, the length process might increase at a constant rate (rather than being stochastic) until the branch reaches a certain age. This could represent some resources dedicated to creating the branch and persist for a short period of time and prevent the immediate retraction of the new branch. (We could instead record the time at which the branch was created and compare that with the current time to determine its age, although this still makes the process non-homogeneous).
\end{example}

If the parameters are themselves stochastic processes then new components would need to be included in the generator to describe their evolution. Such complexity is unnecessary for all the examples considered so far and we do not study this case further.

\subsection{Additional coupling}

There are two types of coupling to consider: internal coupling between the different length/growth/branching processes and external coupling with the environment or other processes therein. 

Firstly consider internal coupling. There are already many opportunities to include such coupling, for example: the drift term $b(\omega_t)$ allows a coupling between the growth and length processes, the branching rate $\Lambda(\mu,\omega)$ can depend on features of the whole neuron, and the parameters described in the previous section can introduce dependence on features such as tree depth. For example, in certain types of neurons there is evidence of self-avoidance of dendrites \cite{matthews2007dendrite,soba2007drosophila,emoto2004control} leading to a `space-filling' behaviour that is reproduced in some models \cite{shree2022dynamic,cuntz2012scaling,luczak2006spatial}. In this case it is again important that the process $\omega_t$ tracks the entire path as this provides boundaries for the spatial process. While introducing further internal coupling in the model construction is technically challenging, it is typically straightforward to do so in numerical simulations (in particular in the time discretisation with small $\Delta t$). 

External coupling can similarly come in many forms. Typically this would affect the growth process, for example a chemoattractant in the background that biases the direction of growth, but in principle all process can be coupled to the path $\omega$, meaning they have access to this spatial information and can therefore take cues from the environment. This means that the length process, growth process and branching rates can all depend on environmental information, either in the form of a fixed background signal or arising from coupling with other neurons. The flexibility of the model allows different levels of coupling to be included and therefore could be used to test which of these couplings are present in real neurons. 

\section{Conclusion} \label{Section: Conclusion}

The model presented in this paper solves three key challenges in correctly capturing the behaviour of neurons: the introduction of a path-valued process allows for an accurate model of retraction; the use of a measure to track multiple dendrites allows a clean description of the branching process; and the introduction of active and inactive branches solves the branching-retraction problem without limiting the length process or creating unnecessary coupling between branches. Moreover, the separation of length, growth and branching processes creates great flexibility, as demonstrated in the examples in Section \ref{Section: numerical simulation} and the extension in Section \ref{Section: Extensions}, including the ability to easily extend the model into 3D or a specific spatial domain. Real dendrites show a range of morphologies, see for example \cite{cajal1995histology,polavaram2014statistical} and the NeuroMorpho database, therefore it is useful that a single model can be adapted to reflect this heterogeneity rather than restricting to a single neuronal cell type. This flexibility also raises the possibility of adapting the model to other scenarios in which similar challenges occur, for example in root growth or modelling branching tissues such as the lungs or kidneys. 

As well as providing a rigorous modelling framework, we have also outlined several different methods for numerical simulation at a range of temporal and spatial resolutions. These will prove useful in future work fitting models to data to understand what processes and parameters control real dendrite development in different settings. In both simulation methods, there is a separation between the parameters/rates controlling growth and retraction and the parameters/rate controlling branching. This allows these processes to be fitted independently. In addition, parameter fitting could be done without measuring traditional morphology statistics, such as Sholl analysis, meaning that these could be used to validate model predictions. 

The results of Section \ref{Section: Properties} show that the model produces the expected tree-like structure of neurons and provides insight into the process, for example, predicting the total length and number of (active) branches. This is made possible through the description of the generator of the full process, which was also used to understand the correct scaling of rates in the segment discretisation. 

In summary, by providing a more rigorous solution to modelling retraction and avoiding the branching-retraction problem, we have developed a general model that captures the vital features of dendrite growth and have shown its effectiveness through both analytical results and simulations. We hope that the ideas in this paper, in particular the use of path-valued processes and monitoring branch activity will be adopted in other dendrite growth models and prove useful in comparisons with neural imaging data. 

\bibliography{bibliography}

\section*{Acknowledgements}

This work was supported by a Leverhulme Trust Research Project Grant (RPG-2024-360) to LCA and KMP. The authors would also like to thank Lavinia Mitiko Takarabe, Joshua Looker and the lab groups of Laura Andreae and Benjamin Walker for their support and useful discussions. 

\vspace{1em}
\noindent CRediT author statement: 

\noindent\textbf{Andrew Nugent:} Conceptualisation, methodology, software, formal analysis, investigation, writing - original draft, writing - editing, visualisation. 
\textbf{Karen Page:} Conceptualisation, writing - editing, supervision, project administration, funding acquisition. 
\textbf{Alexey Zaikin:} Supervision, project administration. 
\textbf{Laura C. Andreae:} Conceptualisation, writing - editing, supervision, project administration, funding acquisition.

\vspace{1em}
\noindent Data availability statement: No data was used in the preparation of this manuscript. 

\newpage
\appendix

\section{Form of the generator} \label{Appendix: Form of generator}

\subsection{Cylinder functions} \label{Appendix: Cylinder functions}

Proof of Lemma \ref{Lemma: cylinder functions in D(A)}
\begin{proof}
    Let $F \in D_c$, defined by \eqref{Def: cylinder functions}, then
    \begin{align} \label{Eqn: Generator of cylinder functions}
        &\frac{1}{t}\mathbb{E}_\omega \big[ F(\omega_t) - F(\omega)\big] \nonumber\\
        &\quad= \frac{1}{t}\mathbb{E}_\omega \big[ \varphi\big( \ell_t, \omega_t(u_1 \wedge \ell_t), \dots, \omega_t(u_k \wedge \ell_t) \big) - \varphi\big( \ell_\omega, \omega(u_1 \wedge \ell_\omega), \dots, \omega(u_k \wedge \ell_\omega)\big)\big] \,.
    \end{align}
    If $u_k \wedge \ell_\omega < \ell_\omega$ then for small $t$, since $\ell_t$ changes continuously, we will have that $F(\omega_t) = F(\omega)$ as the difference at the endpoint will not be captured in this finite dimensional projection. Therefore assume that $u_k \wedge \ell_\omega = \ell_\omega$. This implies that $u_k \geq \ell_\omega$. 

    If $u_k \geq \ell_\omega$ then for sufficiently small times we will have that $u_k \wedge \ell_t = \ell_t$. In this case $\varphi$ is effectively a function of $\ell_t$ and $\hat{\omega}$ only, we suppress dependence on $\omega(u_i)$ for $i=1,\dots,k-1$ below for readability. If there are multiple $u_i$ with $u_i \geq \ell_\omega$ then $\varphi$ is still effectively a function of $\ell_t \in \mathbb{R}$ and $\hat{\omega} \in X$ only. 
    
    As both $\ell_t$ and the spatial motion are continuous and $\varphi$ is smooth, the limit of \eqref{Eqn: Generator of cylinder functions} as $t\rightarrow0$ will exist. More specifically, 
    \begin{align*}
        \frac{1}{t}\mathbb{E}_\omega \big[ F(\omega_t) - F(\omega)\big] 
        &= \frac{1}{t}\mathbb{E}_\omega \big[ \varphi\big( \ell_t, \hat{\omega}_t\big) - \varphi\big( \ell, \hat{\omega}\big)\big] \\
        &= \frac{1}{t}\mathbb{E}_\omega \big[ \varphi\big( \ell_t - \ell + \ell, \hat{\omega}_t - \hat{\omega} + \hat{\omega} \big) - \varphi\big( \ell, \hat{\omega}\big)\big] \\
        &= \frac{1}{t}\mathbb{E}_\omega \big[ (\ell_t - \ell) \, \partial_1 \varphi(\ell,\hat{\omega}) + (\hat{\omega}_t-\hat{\omega}) \, \nabla_2\varphi(\ell,\hat{\omega}) \\
        &\hspace{1.5cm}+ (1/2)(\ell_t - \ell)^2 \, \partial_1^2 \varphi(\ell,\hat{\omega}) \\[0.5em]
        &\hspace{1.5cm}+ (1/2)(\hat{\omega}_t-\hat{\omega})^T \, \nabla_2^2\varphi(\ell,\hat{\omega}) (\hat{\omega}_t-\hat{\omega})\\[0.5em]
        &\hspace{1.5cm}+ (\ell_t - \ell)\,\partial_1 \nabla_2 \varphi(\ell,\hat{\omega})(\hat{\omega}_t-\hat{\omega}) \big]
    \end{align*}
    Looking at some terms individually 
    \begin{align*}
        \frac{1}{t} \mathbb{E}_\omega \big[ (\ell_t - \ell) \, \partial_1 \varphi(\ell,\hat{\omega}) \big] 
        &= \partial_1 \varphi(\ell,\hat{\omega})\, \frac{1}{t} \mathbb{E}_\omega \big[ \ell_t - \ell \big] = 0 \,, \\
        \frac{1}{t} \mathbb{E}_\omega \big[ (\ell_t - \ell)^2 \, \partial_1^2 \varphi(\ell,\hat{\omega}) \big] 
        &= \partial_1^2 \varphi(\ell,\hat{\omega})\, \frac{1}{t} \mathbb{E}_\omega \big[ (\ell_t - \ell)^2 \big] = \partial_1^2 \varphi(\ell,\hat{\omega}) \,.
    \end{align*}
    Conditional on $\ell_t - \ell = r$, $\hat{\omega}_t$ is distributed according to $\xi_r$ started from $\hat{\omega}$. Hence all functions involving $\hat{\omega}_t - \hat{\omega}$ can be bounded using the generator of this continuous spatial motion. As $\ell_t$ is a Brownian motion, $r$ is of order $\sqrt{t}$ so all such terms will vanish. Thus in this case the generator has the form 
    \begin{align*}
        \mathcal{A}F(\omega) = \frac{1}{2} \partial_1^2 \varphi(\ell_\omega, \omega) \,.
    \end{align*}
\end{proof}

\subsection{Absorbed Brownian motion} \label{Appendix: Absorbed Brownian motion}

In \cite{dhersin2000stochastic} the behaviour of the generator on test functions $f\in D$ is described using the generator of a simpler process, known as the A-path \cite{venttsel1986infinitesimal,dawson2006measure}. In this process the length grows deterministically at constant rate 1, that is $\ell_t = \ell + t$. As in the definition of the Brownian snake, $(w(\ell+s), 0 \leq s \leq t)$ is distributed according to the process $\xi$ started at $\omega(\ell)$. Letting $L$ denote the generator of the A-path, which is well-defined for $F \in D$, it is shown that the generator of the Brownian snake satisfies
\begin{align*}
    \mathcal{A}F(\omega) = \frac{1}{2} Lg(\omega) \,.
\end{align*}
A small modification to the Brownian snake can be made by replacing the length process (originally a reflected Brownian motion) with a Brownian motion absorbed at $\ell=0$. 

We firstly give the proof of Lemma \ref{Lemma: gamma for absorbed BM}, restated below, which is a standard application of the reflection principle.

\begin{lemma}
    When $\ell_t$ is a Brownian motion absorbed at $\ell=0$ we have 
    \begin{align*}
        \gamma_t^\ell(dp,dq) &= \frac{2(\ell+q-2p)}{\sqrt{2\pi t^3}} \, \exp\bigg( -\frac{(\ell+q-2p)^2}{2t}\bigg) \, \mathds{1}_{\{ 0 < p < \ell\wedge q \}}\,dp \, dq \, \\
        &\quad + 2\bigg(1 - \Phi\left(\frac{\ell}{\sqrt{t}}\right)\bigg) \delta_{(0,0)}(dp,dq) \,.
    \end{align*}
\end{lemma}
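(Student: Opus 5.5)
The plan is to reduce everything to the classical joint law of the running minimum and terminal value of a free (unabsorbed) Brownian motion started at $\ell>0$, and then account separately for paths that hit zero before time $t$. Let $B$ be a standard Brownian motion with $B_0=\ell$, let $m_t=\inf_{r\le t}B_r$, and let $\tau_0$ be the hitting time of $0$. The absorbed process is $\ell_r=B_{r\wedge\tau_0}$. I will split the event space according to whether $\tau_0>t$ or $\tau_0\le t$.

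On $\{\tau_0\le t\}$ the absorbed path sits at $0$ from $\tau_0$ onwards. Hence both $p=\inf_{r\le t}\ell_r$ and $q=\ell_t$ equal $0$, and this event contributes an atom at $(0,0)$. Its mass is $\mathbb{P}_\ell(\tau_0\le t)=\mathbb{P}_\ell(m_t\le 0)$. By the reflection principle this equals $2\mathbb{P}_0(B_t\ge \ell)=2(1-\Phi(\ell/\sqrt t))$, which matches the second term of the statement.

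On $\{\tau_0>t\}$ the absorbed and free processes coincide on $[0,t]$, so $(p,q)=(m_t,B_t)$ with $m_t>0$. Here I would derive the joint density of $(m_t,B_t)$ from the reflection principle. For $p<\ell$ and $q\ge p$, reflecting the path after its first hitting time of level $p$ gives
\begin{align*}
\mathbb{P}_\ell(m_t\le p,\ B_t\in dq)=\mathbb{P}_\ell(B_t\in d(2p-q))=\frac{1}{\sqrt{2\pi t}}\exp\Big(-\frac{(\ell+q-2p)^2}{2t}\Big)\,dq .
\end{align*}
Differentiating in $p$ then gives the density
\begin{align*}
\frac{2(\ell+q-2p)}{\sqrt{2\pi t^3}}\exp\Big(-\frac{(\ell+q-2p)^2}{2t}\Big),
\end{align*}
supported on $p<\ell\wedge q$. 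The constraint $p\le \ell\wedge q$ holds automatically because the infimum lies below both the initial and the terminal value. Restricting to $m_t>0$ produces the indicator $\mathds{1}_{\{0<p<\ell\wedge q\}}$.

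The boundary cases $p=\ell$ (the minimum attained at time $0$) and $p=q$ have probability zero, so they do not affect the density. As a consistency check, the absolutely continuous part should integrate to $1-2(1-\Phi(\ell/\sqrt t))=2\Phi(\ell/\sqrt t)-1$. Integrating first in $p$ returns the killed-Brownian-motion transition density $\varphi_t(q-\ell)-\varphi_t(q+\ell)$, where $\varphi_t$ is the centred Gaussian density with variance $t$, and integrating that over $q>0$ gives exactly this value.

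The only step needing care is the sign and orientation of the reflection for a minimum rather than a maximum. The reflected endpoint must be $2p-q$, so that the squared distance is $(\ell-(2p-q))^2=(\ell+q-2p)^2$. Confirming this orientation, and confirming that differentiating in $p$ brings down the factor $2(\ell+q-2p)/t$, is where I would check the computation most carefully.
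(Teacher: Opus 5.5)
Your proposal is correct and follows essentially the same route as the paper. Both arguments use the reflection principle for the joint law of the running minimum and endpoint of unabsorbed Brownian motion, differentiate in $p$, and split on $\{\tau_0\le t\}$ to obtain the atom of mass $2(1-\Phi(\ell/\sqrt t))$ at $(0,0)$. Your orientation $\mathbb{P}_\ell(m_t\le p,\,B_t\in dq)=\mathbb{P}_\ell(B_t\in d(2p-q))$ is the correct one: the paper's intermediate display labelled $\mathbb{P}[m_t\le p,\ell_t\in dq]$ is really the law on $\{m_t>p\}$, so differentiating it recovers the density only up to sign. Your mass check against the killed-Brownian-motion density is a worthwhile addition.
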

\begin{proof}
    Let $m_t = \inf_{0\leq s \leq t }\ell_s$. For a standard Brownian motion, the reflection principle gives that 
    \begin{align*}
        \mathbb{P}[m_t \leq p, \ell_t \leq q] 
        &= \mathbb{P}[\ell_t \leq q] - \mathbb{P}[m_t > p, \ell_t \leq q] \\
        &= \mathbb{P}[\ell_t \leq q] - \mathbb{P}[\ell_t \leq 2p -q]
    \end{align*}
    hence
    \begin{align*}
        \mathbb{P}[m_t \leq p , \ell_t \in dq] = \frac{1}{\sqrt{2\pi t}} \Bigg( \exp\bigg(- \frac{(q-\ell)^2}{2t}\bigg) - \exp\bigg( -\frac{(2p-q-\ell)^2}{2t}\bigg) \Bigg)\, dq \,.
    \end{align*}
    Then differentiating with respect to $a$ gives
    \begin{align*}
        \mathbb{P}[m_t \in dp , \ell_t \in dq] = \frac{2(\ell+q-2p)}{\sqrt{2\pi t^3}} \, \exp\bigg( -\frac{(\ell+q-2p)^2}{2t}\bigg) \, dp \, dq
    \end{align*}
    for $p \leq \min(\ell,q)$. To include absorption at zero we introduce a stopping time. For $B_t$ a standard Brownian motion started at $\ell > 0$ let $\tau_0 = \inf\{B_t = 0\}$, then $\ell_t = B_{t \wedge \tau_0}$. For $t<\tau_0$ the process is a standard Brownian motion so has the density above, while for $t\geq \tau_0$ both $\ell_t$ and $m_t$ are equal to zero. Again using the reflection principle we have 
    \begin{align*}
        \mathbb{P}[\tau_0 \leq t] = \mathbb{P}\Big[ \inf_{0 \leq s \leq t} B_t \leq 0 \Big] = \mathbb{P}\Big[ \sup_{0 \leq s \leq t} B_t \geq 2\ell \Big] = 2\mathbb{P}[ B_t \geq 2\ell] = 2(1 - \Phi(\ell/\sqrt{t})) \,,
    \end{align*}
    which gives the result. 
\end{proof}

Using this we may prove the following, which confirms the form of the generator when using an absorbed Brownian motion for the length process. 

\begin{proposition} \label{Prop: Generator of snake with absorbed BM}
    When $\ell_t$ is a Brownian motion absorbed at $\ell=0$, the test functions $D$ form a subset of the domain $\mathcal{D}(A)$. More specifically, for all $F \in D$ we have the limit
    \begin{align*}
        \lim_{t\rightarrow0} \frac{T(t)F - F}{t} 
        = \frac{1}{2} Lg(\omega) \, \mathds{1}_{\{ \ell_\omega >0 \}} \,.
    \end{align*}
\end{proposition}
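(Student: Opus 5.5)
Throughout, $F(\omega)=\int_0^{\ell_\omega} g(\omega^{\leq r})\,dr$ with $g\in C_b(\W)$, and $T(t)$ (resp.\ $\tilde T(t)$) denotes the semigroup of the snake with absorbed (resp.\ reflected) length process. The plan is to compare the two snakes on an event of overwhelming probability. By Proposition \ref{Prop: well posed with absorbing BC}, the absorbed snake can be realised as $\tilde\omega_t=\omega_{t\wedge\tau_U}$, where $\omega_t$ is the standard (reflected) Brownian snake started at $\omega$. On the event $\{\tau_U>t\}$ the two processes coincide, so
\begin{align*}
T(t)F(\omega)-F(\omega) = \tilde T(t)F(\omega) - F(\omega) + \mathbb{E}_\omega\big[(F(\omega_{\tau_U})-F(\omega_t))\mathds{1}_{\{\tau_U\leq t\}}\big] \,.
\end{align*}
The proof then splits into the two cases $\ell_\omega>0$ and $\ell_\omega=0$.

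\emph{Case $\ell_\omega>0$.} From \cite{dhersin2000stochastic} we have $\frac1t(\tilde T(t)F-F)\to\frac12 Lg(\omega)$. Since $|F(\nu)|\leq \|g\|_\infty \ell_\nu$, the correction term is bounded by
\begin{align*}
\|g\|_\infty\,\mathbb{E}\big[(\ell_{\tau_U}+\ell_t)\mathds{1}_{\{\tau_U\leq t\}}\big] \,.
\end{align*}
Here $\ell_{\tau_U}=0$. By Cauchy--Schwarz, $\mathbb{E}[\ell_t\mathds{1}_{\{\tau_U\le t\}}]\le C\,\mathbb{P}[\tau_U\le t]^{1/2}$. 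Lemma \ref{Lemma: gamma for absorbed BM} gives $\mathbb{P}[\tau_U\leq t]=2(1-\Phi(\ell_\omega/\sqrt t))\leq 2e^{-\ell_\omega^2/2t}$. Even after taking the square root, this is $o(t)$, so the correction vanishes after dividing by $t$ and the limit is $\frac12 Lg(\omega)$.

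\emph{Case $\ell_\omega=0$.} Here $\tau_U=0$, the process is frozen, $T(t)F=F$, and the limit is $0$, which matches the indicator.

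Alternatively, one can avoid coupling altogether and compute directly from the kernel \eqref{Eqn: Snake transition kernel} using the explicit $\gamma_t^\ell$ of Lemma \ref{Lemma: gamma for absorbed BM}. The idea is to write $\mathbb{E}[F(\nu)]=\int\gamma_t^\ell(dp,dq)\big(\int_0^p g(\omega^{\le r})dr+\Pi\text{-term on }[p,q]\big)$, note that the atom at $(0,0)$ contributes $F=0$ with mass $o(t)$, and observe that the absolutely continuous part differs from the unabsorbed density only by the restriction $p>0$, again an exponentially small region. I would use this kernel computation as a cross-check.

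The main obstacle is that convergence must be uniform in $\omega$ for $F$ to lie in the strong domain. Near $\ell_\omega=0$ the bound $e^{-\ell_\omega^2/2t}/t$ is not uniformly small, and the limiting function $\frac12 Lg\,\mathds{1}_{\{\ell_\omega>0\}}$ is discontinuous unless $Lg(\omega)\to0$ as $\ell_\omega\to0$. I would therefore first establish pointwise convergence with a uniform bound (a weak/bp-limit in the sense of \cite{ethier2009markov}), which suffices for the martingale-problem use in the paper. I would then try to upgrade this by checking whether $Lg=g(\omega)$-type expressions allow an estimate $\mathbb{E}[\ell_t\mathds{1}_{\{\tau_U\le t\}}]\leq Ct$ uniformly, using that $\ell_t$ is small on that event.
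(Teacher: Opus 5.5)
Your proof is correct, but it is not the paper's route. The paper redoes the computation of \cite[Theorem 1]{dhersin2000stochastic} directly with the absorbed kernel. It conditions on $(p,q)$ and expands the regrown part of the path with the A-path generator $L$. It then uses the explicit $\gamma_t^\ell$ of Lemma~\ref{Lemma: gamma for absorbed BM} to show that the two boundary terms $R_1=\mathbb{E}[(q-p)\,g(\omega^{\leq p})]$ and $R_2=\mathbb{E}[\int_p^{\ell_\omega} g(\omega^{\leq r})\,dr]$ coincide \emph{exactly}. Both equal $\int_0^{\ell_\omega} g(\omega^{\leq \ell_\omega-x})\,2(1-\Phi(x/\sqrt t))\,dx$. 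What remains, up to a term handled by dominated convergence, is $\frac12 Lg(\omega)\,\mathbb{E}[(q-p)^2]$ with $\mathbb{E}[(q-p)^2]=t+o(t)$. That is essentially your ``cross-check'', except that the boundary discrepancy you expect to be exponentially small in fact vanishes identically.

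Your main argument instead stops the reflected snake at $\tau_U$ and controls the difference with the tail $\mathbb{P}[\tau_U\le t]=2(1-\Phi(\ell_\omega/\sqrt t))$. This is shorter and uses nothing about $\gamma_t^\ell$ beyond the hitting probability. It would also transfer unchanged to other length processes whose interior generator is known and whose probability of hitting $0$ is exponentially small. Both routes rely on Dhersin--Serlet for the interior behaviour.

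What the direct computation buys is uniformity in $\omega$. Because $R_1=R_2$ and $\mathbb{E}[(q-p)^2]\le t$ for the absorbed length, it gives $|T(t)F-F|/t\le\frac12\|Lg\|_\infty$ for every $\omega$. That is exactly the bounded-pointwise convergence you want.

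Your coupling cannot deliver this, and the upgrade you sketch at the end would fail. Take $g\equiv 1$, so $F(\omega)=\ell_\omega$ and $Lg=0$, and take $\ell_\omega=\sqrt t$. Optional stopping gives $\mathbb{E}[\ell_t\mathds{1}_{\{\tau_U\le t\}}]=\mathbb{E}|\ell_\omega+B_t|-\ell_\omega=c\sqrt t$, with $c=\mathbb{E}|1+Z|-1>0$ and $Z\sim\mathcal{N}(0,1)$. So the estimate $\mathbb{E}[\ell_t\mathds{1}_{\{\tau_U\le t\}}]\le Ct$ is false uniformly. After dividing by $t$, both your correction term and the reflected difference quotient are of order $t^{-1/2}$ near $\ell_\omega=0$, a local-time effect, and they only cancel each other. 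For the bp-statement you should therefore use the kernel computation.
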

\begin{proof}
    Take any $\omega \in \W$. If $\ell_\omega = 0$ then this remains constant, hence 
    \begin{align*}
        T(t)F(\omega) = \mathbb{E}_\omega \big[ F(\omega_t) \big] = \mathbb{E}_\omega \big[ F(\omega) \big] = F(\omega)
    \end{align*}
    and so the generator is equal to zero.

    The remainder of the proof below is very similar to that of \cite[Theorem 1]{dhersin2000stochastic} and so some details are omitted. Assume $\ell_\omega > 0$. Letting $m_t = \inf_{0 \leq s \leq t} \ell_s$ we have that $\omega_t(u) = \omega(u)$ for $u \in [0,m_t]$. Therefore we only need to consider differences after this point, 
    \begin{align*}
        F(\omega_t) - F(\omega) = \int_{m_t}^{\ell_t} g\big(\omega_t^{\leq r}\big) \, dr - \int_{m_t}^{\ell} g\big(\omega^{\leq r}\big) \, dr \,.
    \end{align*}
    Using the independence of the length process we may condition on its value. 
    \begin{align*}
        \mathbb{E}_\omega \big[ F(\omega_t) - F(\omega) \big]
        &= \mathbb{E}_\omega \bigg[ \int_{m_t}^{\ell_t} g\big(\omega_t^{\leq r}\big) \, dr - \int_{m_t}^{\ell} g\big(\omega^{\leq r}\big) \, dr \bigg] \\
        &= \int\int \mathbb{E}_\omega \bigg[ \int_{p}^{q} g\big(\omega_t^{\leq r}\big) \, dr - \int_{p}^{\ell} g\big(\omega^{\leq r}\big) \, dr \bigg]\, \gamma_t^\ell(dp,dq)
    \end{align*}
    Under this conditioning, $\omega_t$ is an extension of $\omega^{\leq p}$, with the extension distributed according to the process $\xi$ started at $\omega(p)$. This means that $(\omega_t, p \leq t \leq q)$ is an A-path and thus we can describe it using the generator of the A-path. That is 
    \begin{align*}
        \mathbb{E}_\omega \Big[ g\big( \omega_t^{\leq r} \big) \big| \, \ell_t = q, m_t = p\Big] = g(\omega^{\leq p}\big) + \int_0^{r-p} \mathbb{E} \big[Lg(\omega'_u) \, | \, \omega'_0 = \omega^{\leq p}\big] \, du \,,
    \end{align*}
    where $\omega'$ is the A-path. Combining this with the above we have 
    \begin{align*}
        \mathbb{E}_\omega \big[ F(\omega_t) - F(\omega) \big]
        &= \int_{(\mathbb{R}^+)^2} \bigg[ \int_{p}^{q} \mathbb{E}_\omega \Big[ g\big(\omega_t^{\leq r}\big) \Big] \, dr - \int_{p}^{\ell} g\big(\omega^{\leq r}\big) \, dr \bigg]\, \gamma_t^\ell(dp,dq) \\
        &= \int_{(\mathbb{R}^+)^2} \int_{p}^{q} g(\omega^{\leq p}\big) \, dr \, \gamma_t^\ell(dp,dq) + R_1 - R_2 \,,
    \end{align*}
    where similarly to \cite{dhersin2000stochastic} we define
    \begin{align} \label{Eqn: R1 and R2 definition}
        R_1 &= \int_{(\mathbb{R}^+)^2} \int_{p}^{q} g(\omega^{\leq p}\big) \, dr \, \gamma_t^\ell(dp,dq) \,,\\
        R_2 &= \int_{(\mathbb{R}^+)^2}  \int_{p}^{\ell} g\big(\omega^{\leq r}\big) \, dr \, \gamma_t^\ell(dp,dq) \,.
    \end{align}
    In the case of a reflected Brownian motion the difference between these terms vanishes sufficiently quickly as $t\rightarrow0$ that they can be discounted. In the case of an absorbing Brownian motion we may use Lemma \ref{Lemma: gamma for absorbed BM} to check that in fact $R_1=R_2$. Therefore the only remaining term is 
    \begin{align*}
        \mathbb{E}_\omega \big[ F(\omega_t) - F(\omega) \big]
        &= \mathbb{E}_\ell \Bigg[ \int_{p}^{q} \int_0^{r-p} \mathbb{E} \big[Lg(\omega'_u) \, | \, \omega'_0 = \omega^{\leq p}\big] \, du  \, dr \Bigg] \\
        &= \mathbb{E}_\ell \Bigg[ \int_{0}^{q-p} \int_{p+u}^{q} \mathbb{E} \big[Lg(\omega'_u) \, | \, \omega'_0 = \omega^{\leq p}\big] \, dr  \, du \Bigg] \\
        &= \mathbb{E}_\ell \Bigg[ \int_{0}^{q-p} (q-p-u) \, \mathbb{E} \big[Lg(\omega'_u) - Lg(\omega) \, | \, \omega'_0 = \omega^{\leq p}\big]  \, du \Bigg] \\
        &\quad+ \mathbb{E}_\ell \Bigg[ \int_{0}^{q-p} (q-p-u) \, \mathbb{E} \big[Lg(\omega) \, | \, \omega'_0 = \omega^{\leq p}\big]  \, du \Bigg] 
    \end{align*}
    The first term vanishes by dominated convergence \cite{dhersin2000stochastic}. Looking at the second term
    \begin{align*}
        &\mathbb{E}_\ell \Bigg[ \int_{0}^{q-p} (q-p+u) \, \mathbb{E} \big[Lg(\omega) \, | \, \omega'_0 = \omega^{\leq p}\big]  \, du \Bigg] \\
        &\hspace{1cm}= Lg(\omega) \,\mathbb{E}_\ell \big[(q-p)^2\big] - Lg(\omega) \, \mathbb{E}_\ell \Bigg[ \int_{0}^{q-p} u \, du \Bigg] \\
        &\hspace{1cm}= \frac{1}{2} Lg(\omega) \,\mathbb{E}_\ell \big[(q-p)^2\big] \,.
    \end{align*}
    As with the reflecting Brownian motion, $\mathbb{E}_\ell \big[(q-p)^2\big] = t + o(t)$ and so
    \begin{align*}
        \frac{1}{t} \mathbb{E}_\omega \big[ F(\omega_t) - F(\omega) \big] \rightarrow \frac{1}{2} Lg(\omega) \, \mathds{1}_{\{ \ell_\omega >0 \}} \,.
    \end{align*}
\end{proof}
This confirms that the only change to the generator is the inclusion of the indicator $\mathds{1}_{\{ \ell_\omega >0 \}}$ as expected. 

\section{Deterministic motion} \label{Appendix: Deterministic motion}

Modelling is significantly simpler in the case of deterministic spatial growth, as it is no longer necessary to track the entire path of the dendrite, but is sufficient to know its length $\ell_t$ and deterministic growth trajectory. 

Assume $X$ is a compact subset of $\mathbb{R}^n$ and there is some known, continuous vector field $V:X \rightarrow X$ describing the spatial growth of the dendrite's endpoint. That is, a dendrite tip at $x \in X$ will grow in the direction $V(x)$. To introduce some variety this vector field may depend continuously on some (constant) parameters $\theta$ that will be randomly chosen for each dendrite upon its creation. Denote this vector field by $V_\theta(x)$. 

For example, let $X\subset \mathbb{R}^2$ and set $\theta \in [0,2\pi)$ to be a randomly chosen growth direction of the dendrite. Then $V_\theta(x) = (\cos(\theta),\sin(\theta))$ describes constant growth in this direction. This is similar to the straight growth assumed in \cite{li1992neurite}. 

Beginning from $x_0$ the vector field $V_\theta$ creates a flow $\phi(x_0,\theta,\ell)$ defined by
\begin{align*}
    \phi(x_0,\theta,\ell) = x_0 + \int_0^\ell V_\theta( \phi(x_0,\theta,s) ) \, ds \,.
\end{align*}
That is, $\phi$ is the solution to the initial value problem
\begin{align*}
    \partial_\ell \phi(x_0,\theta,\ell) &= V_\theta( \phi(x_0,\theta,\ell) ) \,,\\
    \phi(x_0,\theta,0) &= x_0 \,.
\end{align*}
The dendrite's endpoint follows this flow, meaning 
\begin{align} \label{Eqn: omega for deterministic motion}
    \omega_t = \{ \phi(x_0, \theta, s) \,|\, s \in [0,\ell_t] \} \,.
\end{align}

Returning to the example above, this is simply 
\begin{align*}
    \omega_t = \{ x_0 + (s \cos(\theta),s \sin(\theta)) \,|\, s \in [0,\ell_t] \} \,.
\end{align*}

The dendrite $\omega_t$ is therefore determined by the triple $(x_0,\theta,\ell_t)$ and so, rather than using the state space $\W$, we may use the simpler state space $X \times \Theta \times \mathbb{R}^+$, where $\Theta$ is the set of possible values of the parameter $\theta$. We assume that $\Theta$ is a compact subset of $\mathbb{R}^m$ for some $m\geq 0$. Note that only the final coordinate $\ell_t$ is dynamic. 

\begin{assumption} \label{Assumption: ell Feller}
    The length process $\ell_t$ is a Feller process on $\mathbb{R}$ with generator $\mathcal{A}_\ell$ whose domain is the set $\hat{C}(\mathbb{R})$ of continuous functions vanishing at infinity. Moreover, the process has almost surely continuous sample paths. 
\end{assumption}

\begin{proposition}
    Under Assumption \ref{Assumption: ell Feller}, the process \eqref{Eqn: omega for deterministic motion} is a Feller process with almost surely continuous sample paths. 
\end{proposition}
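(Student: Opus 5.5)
The process in \eqref{Eqn: omega for deterministic motion} is the image of the Markov process $Z_t=(x_0,\theta,\ell_t)$ on $E:=X\times\Theta\times\mathbb{R}^+$ under the map $\Psi(x_0,\theta,\ell)=\phi(x_0,\theta,\cdot)|_{[0,\ell]}\in\W$. I would therefore prove the statement for $Z_t$ and then transfer it to $\omega_t$ via $\Psi$. The transfer rests on two facts: $\Psi$ is continuous, and $\Psi$ is injective on the relevant set, or at least measurable with a measurable inverse on its image. If so, $\omega_t$ is a Markov process on $\Psi(E)\subset\W$, and the Feller and path-continuity properties pass through $\Psi$.

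\textbf{Step 1: $Z_t$ is Feller with continuous paths.} The coordinates $(x_0,\theta)$ are constant in time and $\ell_t$ evolves autonomously. Hence the semigroup of $Z_t$ is $T_tf(x_0,\theta,\ell)=\mathbb{E}_\ell[f(x_0,\theta,\ell_t)]$. Since $X$ and $\Theta$ are compact, $\hat C(E)$ is the uniform closure of finite sums $\sum_j a_j(x_0,\theta)\,h_j(\ell)$ with $a_j\in C(X\times\Theta)$ and $h_j\in\hat C(\mathbb{R})$, by Stone--Weierstrass. On such functions $T_t$ acts as $\sum_j a_j\,T^\ell_t h_j$, where $T^\ell_t$ is the semigroup of $\ell_t$. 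Assumption \ref{Assumption: ell Feller} then gives that $T_t$ maps $\hat C(E)$ into itself and is strongly continuous at $t=0$. Both properties extend to all of $\hat C(E)$ by density, since $T_t$ is a contraction. Paths of $Z_t$ are continuous because those of $\ell_t$ are.

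\textbf{Step 2: continuity of $\Psi$.} Since $V_\theta(x)$ depends continuously on $(x,\theta)$, I assume in addition that it is locally Lipschitz in $x$, or at least that solutions of the initial value problem are unique. Standard ODE theory then makes the flow $\phi$ jointly continuous in $(x_0,\theta,s)$. By compactness of $X\times\Theta\times[0,\ell_{\max}]$ it is uniformly continuous. From the definition of $d_\W$ we get
\begin{align*}
d_\W(\Psi(z),\Psi(z'))\le |\ell-\ell'|+\sup_{s}\big|\phi(x_0,\theta,s\wedge\ell)-\phi(x_0',\theta',s\wedge\ell')\big|,
\end{align*}
and the right-hand side tends to $0$ as $z'\to z$. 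It follows that $t\mapsto\omega_t=\Psi(Z_t)$ is almost surely continuous, and that $f\circ\Psi\in C(E)$ for $f\in C_b(\W)$. The Feller property of $Z$ then yields continuity of $z\mapsto\mathbb{E}_z[f(\omega_t)]$ and strong continuity of the semigroup on the state space $\Psi(E)$.

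\textbf{Main obstacle.} The difficulty is making $\omega_t$ Markov in its own right, which requires that $Z_t$ can be recovered from $\omega_t$. The length and the starting point are always recoverable: $\ell=\ell_\omega$ and $x_0=\omega(0)$. The parameter $\theta$, however, need not be determined by the path, for example when $\ell=0$ or when different $\theta$ give the same trajectory. I would handle this in one of two ways. The first is to assume that $\theta\mapsto\phi(x_0,\theta,\cdot)$ is injective for positive lengths and treat $\ell=0$ as a trap. The second, which is cleaner, is to state the Feller property for the pair $(\omega_t,\theta)$, in line with how the activity variable was appended earlier. Separately, to keep $\hat C(\mathbb{R})$ consistent with a compact state space, I would impose the reflecting bound $\ell_{\max}$ (and absorption at $0$) as in Section \ref{Section: Generator}. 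With that bound in place, $\Psi(E)$ is compact, being the continuous image of a compact set, so no vanishing-at-infinity issues arise.
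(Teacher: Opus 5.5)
Your Step~1 is the paper's argument made precise. The paper takes the state space to be $X\times\Theta\times\mathbb{R}^+$ (compact once $\ell\le\ell_{\max}$) and notes that $(x_0,\theta)$ are frozen, so the process is driven by $\ell_t$ alone. It then concludes Feller with continuous paths by appeal to Ethier--Kurtz, Ch.~4.2. Your Stone--Weierstrass density of sums $\sum_j a_j(x_0,\theta)h_j(\ell)$, followed by the contraction/density extension, is exactly what justifies that one line.

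Where you genuinely differ is that you do not identify $\omega_t$ with the triple. Instead you prove the statement for the path-valued process on $\W$ by transferring through $\Psi$. This costs you hypotheses the paper never states. First, you need uniqueness for the ODE, so that $\phi$ is well defined and jointly continuous. Second, you need either identifiability of $\theta$ from positive-length paths with $\ell=0$ a trap, or $\theta$ carried along as in $(\omega_t,\theta)$. These are precisely what the paper's phrase ``a continuous function of $\ell_t$, hence is itself a Feller process'' silently requires on $\W$, since a continuous image of a Feller process need not even be Markov. With $E$ compact and $\Psi$ (or $(\Psi,\theta)$) a continuous injection, it is a homeomorphism onto its image and everything transfers, so your argument is correct.

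The paper's shortcut suffices for its own purposes, because the measure-valued process it builds next is defined on the triples $(x_0^i,\theta^i,\ell^i_t)$ rather than on paths. Your route proves the proposition as literally stated on $\W$ and makes the hidden assumptions explicit.
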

\begin{proof}
    Firstly note that the state space is locally compact, and indeed compact if we restrict to $\ell\in[0,\ell_{\max}]$. As the components $x_0$ and $\theta$ are fixed, the process is simply a continuous function of $\ell_t$, hence is itself a Feller process with almost surely continuous sample paths \cite[Chapter 4.2]{ethier2009markov}.
\end{proof}

For a fixed $n$ branches we may again define the measure-valued process
\begin{align*}
    \mu_t = \sum_{i=1}^N \delta\{(x_0^i, \theta^i, \ell^i_t)\} \,,
\end{align*}
which has generator 
\begin{align*}
    Af(\mu) &=
    \lim_{t\rightarrow0} \frac{1}{t} \mathbb{E} \big[ \langle f,\mu_t \rangle - \langle f,\mu\rangle \big] \\
    &= \lim_{t\rightarrow0}\frac{1}{t} \mathbb{E} \bigg[ \sum_{i=1}^N f(x_0^i, \theta^i, \ell^i_t) - f(x_0^i, \theta^i, \ell^i_0)\bigg] \\
    &= \lim_{t\rightarrow0} \frac{1}{t} \mathbb{E} \bigg[ \sum_{i=1}^N f_{x_0^i, \theta^i}(\ell^i_t) - f_{x_0^i, \theta^i}(\ell^i_0) \bigg] \\
    &= \sum_{i=1}^N A_\ell f_{x_0^i, v^i}(\ell^i_0) \,,
\end{align*}
where $f_{x_0^i, v^i}(\ell) := f(x_0^i, \theta^i, \ell)$. Defining
\begin{align*}
    A_\ell f(x_0,\theta,\ell) := A_\ell f_{x_0, \theta}(\ell) \,,
\end{align*}
this gives
\begin{align}
    Af(\mu) = \langle A_\ell f, \mu \rangle \,.
\end{align}
Jumps may be added similarly to the construction in Section \ref{Section: New branches} and again removed when $\ell=0$. 

This simplification does not resolve the issue of combining retraction and branching discussed at the beginning of Section \ref{Section: branching process} and therefore it would again be necessary to include an activity $a^i$ for each branch (and update the start position $x_0$ accordingly in these jumps). 

This simplification removes the need to consider a path-valued process by using the flow $\phi$ to construct the path from three variables. This leads to greater freedom in choosing $\ell_t$, but a massive loss of freedom by enforcing deterministic growth.  

\section{Dynamic instability} \label{Appendix: Dynamic instability}

In this section we discuss the dynamic instability version of the length process introduced at the end of Section \ref{Section: Alternative length processes}. Recall that 
\begin{align*}
    \ell_t = \ell_0 + \int_0^t v_s \, ds \,,
\end{align*}
where $v_s \in \mathbb{R}$ is the growth/retraction rate at time $s$ and is defined by a jump process on $\mathbb{R}$. This process has a generator of the form
\begin{align*}
    \mathcal{A}_v f(v) = \lambda(v) \int f(v') - f(v) \, \rho(v,dv') \,, 
\end{align*}
where $\lambda$ is the total rate of jumps out of state $v$ and $\rho(v,dv')$ describes the distribution of the new state $v'$. 

Since $v$ is not a variable captured in the state space $\W$, we consider functions $F(\omega,v)$. The evolution of $\omega$ will depend on $v$ (specifically its sign). Jumps in $v$ will be included in the generator through a term of the form
\begin{align*}
    \lambda(v) \int F(\omega,v') - F(\omega,v) \, \rho(v,dv') \,,
\end{align*}
but these jumps will have no effect on the evolution of $\omega$ (at first order). 

We consider three cases below. For simplicity we temporarily write $F$ as a function of $\omega$ only. 

\textbf{Case 1:} If $v_0 = 0$ then $\omega$ is unchanged so
\begin{align*}
    \lim_{t\rightarrow0} \frac{\mathbb{E} \big[ F(\omega_t) \big] - F(\omega_0)}{t} = 0 \,.
\end{align*}

\textbf{Case 2:} If $v_0 < 0$ then $\omega_t = \omega^{\leq \ell_0 + t v_0}$, so
\begin{align*}
    \lim_{t\rightarrow0} \frac{\mathbb{E} \big[ F(\omega_t) \big] - F(\omega_0)}{t} 
    &= \lim_{t\rightarrow0} \frac{F(\omega^{\leq \ell_0 + t v_0}) - F(\omega_0)}{t} \\
    &= \lim_{T\rightarrow0} v_0\frac{F(\omega^{\leq \ell_0 + T}) - F(\omega_0)}{T} 
\end{align*}
using $T = tv_0$. Assume that $F$ is differentiable as $\omega$ is shortened, i.e. that the following limit exists for all $\omega$, 
\begin{align*}
    \partial_\ell F(\omega) := \lim_{r \uparrow \ell_\omega} \frac{F(\omega^{\leq r}) - F(\omega)}{r} \,,
\end{align*}
then
\begin{align*}
    \lim_{t\rightarrow0} \frac{\mathbb{E} \big[ F(\omega_t) \big] - F(\omega_0)}{t} 
    &= \lim_{t\rightarrow0} \frac{F(\omega^{\leq \ell_0 + t v_0}) - F(\omega_0)}{t} \\[0.5em]
    &= v_0 \, \partial_\ell F(\omega_0) \,.
\end{align*}

\textbf{Case 3:} If $v_0>0$ then $\omega_t$ is an extension of $\omega_0$ using the law of $\xi$. Therefore we can apply the generator $L$ of the A-path (see Appendix \ref{Appendix: Absorbed Brownian motion}) to give
\begin{align*}
    \lim_{t\rightarrow0} \frac{\mathbb{E} \big[ F(\omega_t) \big] - F(\omega_0)}{t} 
    &= v_0 \, LF(\omega) \,.
\end{align*}
Hence
\begin{align*}
    \mathcal{A}F(\omega,v) &= v_0 \Big( \mathds{1}\{v>0\} LF(\omega,v) + \mathds{1}\{v<0\}\partial_\ell F(\omega,v) \Big) \\
    &+ \lambda(v) \int F(\omega,v') - F(\omega,v) \, \rho(v,dv') \,.
\end{align*}
We do not repeat the calculations in \cite{dhersin2000stochastic} to show that the martingale problem for this generator has a unique solution. Moreover the measure-valued process constructed in Section \ref{Section: branching process} must be adapted to track $v^i$ for each dendrite. To satisfy the requirements imposed by working in $\mathcal{N}_S^\#$ (and due to physical limitations), $v$ should be restricted to some bounded subset of $\mathbb{R}$. However, the setup essentially follows that described in Section \ref{Section: branching process}. 

\end{document}